\documentclass[a4paper,11pt]{article}

\usepackage[applemac]{inputenc}
\usepackage{enumerate}
\usepackage{lmodern}
\usepackage[T1]{fontenc}
\usepackage{verbatim}
\usepackage{textcomp}
\usepackage[english]{babel}
\usepackage[a4paper,vmargin={3.5cm,3.5cm},hmargin={2.5cm,2.5cm}]{geometry}
\usepackage[font=sf, labelfont={sf,bf}, margin=1cm]{caption}
\usepackage[pagebackref=false,
pdftex]{hyperref}
\usepackage[pdftex]{color,graphicx}

\usepackage{amsmath,amsfonts,amssymb,amsthm,mathrsfs}
\usepackage{colonequals} 
\usepackage{stmaryrd}     

\usepackage{cleveref}
  \crefname{theorem}{Theorem}{Theorems}
  \crefname{lemma}{Lemma}{Lemmas}
  \crefname{remark}{Remark}{Remarks}
  \crefname{proposition}{Proposition}{Propositions}
  \crefname{definition}{Definition}{Definitions}
  \crefname{corollary}{Corollary}{Corollaries}
  \crefname{section}{Section}{Sections}
  \crefname{figure}{Figure}{Figures}

\usepackage{color}

\newtheorem{theorem}{Theorem}[]

\newtheorem{proposition}[theorem]{Proposition}
\newtheorem{lemma}[theorem]{Lemma}

\theoremstyle{definition}

\def\a{^{(\alpha)}}
\def\noi{\noindent}
\def\cc{{\mathcal C}}

\def\t{{\mathcal T}}
\def\n{{\mathcal N}}
\def\v{{\mathcal V}}
\def\ve{\varepsilon}
\def\wt{\widetilde}
\def\wh{\widehat}

\def\bd{{\bf d}}
\def\la{\longrightarrow}
\def\T{{\mathbb T}}
\def\R{{\mathbb R}}
\def\P{{\mathbb P}}
\def\E{{\mathbb E}}
\def\N{{\mathbb N}}
\def\Z{{\mathbb Z}}

\def\build#1_#2^#3{\mathrel{\mathop{\kern 0pt#1}\limits_{#2}^{#3}}}

\title{The harmonic measure of balls in critical Galton-Watson trees with infinite variance offspring distribution}

\author{Shen LIN
\thanks{E-mail address: \texttt{shen.lin.math@gmail.com}}\\
\textit{\small Universit\'e Paris-Sud XI}  } 

\date{}

\begin{document}

\maketitle

\begin{abstract}
We study properties of the harmonic measure of balls in large critical Galton-Watson trees whose offspring distribution is in the domain of attraction of a stable distribution with index $\alpha\in (1,2]$. Here the harmonic measure refers to the hitting distribution of height $n$ by simple random walk on the critical Galton-Watson tree conditioned on non-extinction at generation~$n$. For a ball of radius $n$ centered at the root, we prove that, although the size of the boundary is roughly of order $n^{\frac{1}{\alpha-1}}$, most of the harmonic measure is supported on a boundary subset of size approximately equal to $n^{\beta_{\alpha}}$, where the constant $\beta_{\alpha}\in (0,\frac{1}{\alpha-1})$ depends only on the index $\alpha$. Using an explicit expression of $\beta_{\alpha}$, we are able to show the uniform boundedness of $(\beta_{\alpha}, 1<\alpha\leq 2)$. These are generalizations of results in a recent paper of Curien and Le Gall~\cite{CLG13}.

\smallskip
\noindent {\bf Keywords.} critical Galton-Watson tree, harmonic measure, Hausdorff dimension, invariant measure, simple random walk and Brownian motion on trees.

\smallskip
\noindent{\bf AMS 2010 Classification Numbers.} 60J80, 60G50, 60K37. 
\end{abstract}

\section{Introduction}
Recently, Curien and Le Gall have studied in~\cite{CLG13} the properties of harmonic measure on generation $n$ of a critical Galton-Watson tree, whose offspring distribution has finite variance and which is conditioned to have height greater than $n$. They have shown the existence of a universal constant $\beta<1$ such that, with high probability, most of the harmonic measure on generation $n$ of the tree is concentrated on a set of approximately $n^{\beta}$ vertices, although the number of vertices at generation $n$ is of order $n$. Their approach is based on the study of a similar continuous model, where it is established that the Hausdorff dimension of the (continuous) harmonic measure is almost surely equal to $\beta$.

In this paper, we continue the above work by extending their results to the critical Galton-Watson trees whose offspring distribution has infinite variance. To be more precise, let $\rho$ be a non-degenerate probability measure on $\Z_{+}$ with mean one, and we assume throughout this paper that $\rho$ is in the domain of attraction of a stable distribution of index $\alpha\in(1,2]$, which means that
\begin{equation}
\label{eq:stable-attraction}
\sum\limits_{k\geq 0}\rho(k)r^{k}= r+(1-r)^{\alpha}L(1-r)\qquad \mbox{ for any } r\in [0,1),
\end{equation} 
where the function $L(x)$ is slowing varying as $x \to 0^{+}$. We point out that the finite variance condition for $\rho$ is sufficient for the previous statement to hold with $\alpha=2$. When $\alpha\in (1,2)$, by results of~\cite[Chapters XIII and XVII]{F71}, the condition (\ref{eq:stable-attraction}) is satisfied if and only if the tail probability 
\begin{displaymath}
\sum\limits_{k\geq x} \rho(k)=\rho([x,+\infty))
\end{displaymath}
varies regularly with exponent $-\alpha$ as $x \to +\infty$. See e.g.~\cite{BGT87} for the definition of regularly varying functions. 
 
Under the probability measure $\P$, for every integer $n\geq 0$, we let $\mathsf{T}^{(n)}$ be a Galton-Watson tree with offspring distribution $\rho$, conditioned on non-extinction at generation $n$. Conditionally given the tree $\mathsf{T}^{(n)}$, we consider simple random walk on $\mathsf{T}^{(n)}$ starting from the root. The probability distribution of the first hitting point of generation $n$ by random walk will be called the harmonic measure $\mu_{n}$, which is supported on the set $\mathsf{T}^{(n)}_{n}$ of all vertices of $\mathsf{T}^{(n)}$ at generation $n$. 
 
Let $q_{n}>0$ be the probability that a critical Galton-Watson tree $\mathsf{T}^{(0)}$ survives up to generation~$n$. It is shown in~\cite{S68} that, as $n\to \infty$, the probability $q_{n}$ decreases as $n^{-\frac{1}{\alpha-1}}$ up to multiplication by a slowly varying function, and $q_{n}\#\mathsf{T}^{(n)}_{n}$ converges in distribution to a non-trivial limit distribution on $\R_{+}$, whose Laplace transform can be written explicitly in terms of parameter $\alpha$. The following theorem generalizes the result~\cite[Theorem 1]{CLG13} in the finite variance case ($\alpha=2$) to all $\alpha \in (1,2]$.

\begin{theorem}
\label{thm:dim-discrete}
If the offspring distribution $\rho$ has mean one and belongs to the domain of attraction of a stable distribution of index $\alpha\in (1,2]$, there exists a constant $\beta_{\alpha}\in(0,\frac{1}{\alpha-1})$, which only depends on $\alpha$, such that for every $\delta>0$, we have the convergence in $\mathbb{P}$-probability
\begin{equation}
\label{eq:dim-discrete} 
\mu_n\Big(\big\{v\in \mathsf{T}^{(n)}_n\colon n^{-\beta_{\alpha}-\delta}\leq \mu_n(v) \leq n^{-\beta_{\alpha}+\delta}\big\}\Big)  \xrightarrow[n\to\infty]{(\mathbb{P})} 1\,.  
\end{equation}
Consequently, for every $\ve\in(0,1)$, there exists, with $\P$-probability tending to $1$ as $n\to\infty$, a~subset $A_{n,\ve}$ of  $\mathsf{T}^{(n)}_n$ such that $\# A_{n,\ve}\leq n^{\beta_{\alpha}+\delta}$ and $\mu_n(A_{n,\ve})\geq 1-\ve$. Conversely, the maximal $\mu_n$-measure of a set of cardinality bounded by $n^{\beta_{\alpha}-\delta}$ tends to $0$ as $n\to\infty$, in $\P$-probability.
\end{theorem}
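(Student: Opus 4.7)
\medskip
\noindent\textbf{Proof plan.} The second and third assertions of the theorem are elementary consequences of the first. Indeed, (\ref{eq:dim-discrete}) gives that $A_{n,\ve}:=\{v\in \mathsf{T}^{(n)}_n:\mu_n(v)\geq n^{-\beta_\alpha-\delta}\}$ has cardinality at most $n^{\beta_\alpha+\delta}$ (summing the lower bound against $\mu_n$) and harmonic mass $\geq 1-\ve$ with $\P$-probability tending to $1$. Conversely, any set $A$ with $\#A\leq n^{\beta_\alpha-\delta}$ splits into a part with $\mu_n(v)\leq n^{-\beta_\alpha+\delta/2}$, contributing at most $n^{-\delta/2}$ to $\mu_n(A)$, and a part with $\mu_n(v)>n^{-\beta_\alpha+\delta/2}$, which has vanishing $\mu_n$-mass by (\ref{eq:dim-discrete}). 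The substantive statement is therefore
\[
\frac{-\log\mu_n(V_n)}{\log n}\;\xrightarrow[n\to\infty]{(\P)}\;\beta_\alpha,
\]
where $V_n$ is a $\mu_n$-distributed vertex of $\mathsf{T}^{(n)}_n$.

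I would follow the continuous-to-discrete blueprint of~\cite{CLG13}. Consider the reduced tree $\wt{\t}^{(n)}$ obtained from $\mathsf{T}^{(n)}$ by keeping only the ancestral lines of vertices in $\mathsf{T}^{(n)}_n$; after rescaling heights by $n^{-1}$ it converges in distribution to a continuous random tree $\wh{\t}$ of unit height, obtained from the $\alpha$-stable Lévy tree of Duquesne--Le Gall by pruning everything not leading to a point at height $1$. For $\alpha=2$ this recovers the binary tree of~\cite{CLG13}; for $\alpha\in(1,2)$ each internal node of $\wh{\t}$ has random branching arity with a heavy-tailed law inherited from the $\alpha$-stable branching mechanism. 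Endow $\wh{\t}$ with Brownian motion and let $\mu_\infty$ be its hitting distribution on the set of leaves at height $1$. The electrical-network description of simple random walk links $\mu_n$ to $\mu_\infty$ through the effective conductances of the subtrees hanging off $\wt{\t}^{(n)}$, which on the scale $n^{-1}$ are comparable to their continuous counterparts.

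The analytic heart of the argument is the computation of the Hausdorff dimension of $\mu_\infty$. Sampling $X\sim\mu_\infty$, following the spine $\llbracket\emptyset,X\rrbracket$ from the root, and recording at each encountered branch point the arity together with the vector of relative subtree conductances yields a Markov chain taking values in a disjoint union of simplices of random dimension. Establishing existence, uniqueness and log-integrability of its invariant distribution $\nu_\alpha$, then applying the Birkhoff ergodic theorem, should give $\mu_\infty$-a.s.\
\[
-\frac{\log\mu_\infty(B(X,r))}{\log(1/r)}\;\xrightarrow[r\to 0^+]{}\;\beta_\alpha,
\]
where $\beta_\alpha$ is the ratio of two $\nu_\alpha$-expectations (an entropy-type term in the logarithmic conductances divided by a logarithmic shrinkage exponent). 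The explicit form of this ratio is what should yield the uniform boundedness of $(\beta_\alpha,1<\alpha\leq 2)$ announced in the abstract. Transferring the continuous dimension statement back to the discrete setting through the conductance coupling then completes the proof.

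The principal obstacle, absent from the finite-variance analysis of~\cite{CLG13}, is the unbounded random arity at internal nodes of $\wh{\t}$ when $\alpha<2$. The spine Markov chain then lives on simplices of all dimensions, and its transition kernel depends non-trivially on size-biased $\alpha$-stable random variables; proving existence, uniqueness and the log-integrability required for Birkhoff demands careful tail estimates on these objects. A secondary technical burden is controlling the effective conductances of the (possibly very bushy) subtrees pruned during the reduction, uniformly in $n$, so that the continuous-to-discrete comparison survives the scaling limit and transfers the dimension formula to $\mu_n$.
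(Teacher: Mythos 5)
Your plan reproduces the paper's architecture faithfully: prove the continuous dimension result via ergodic theory along the spine, relate discrete and continuous harmonic measures through the scaling limit of reduced trees, then transfer the dimension to $\mu_n$ via conductance control. You also correctly single out the two places where the infinite-variance setting diverges from~\cite{CLG13}: unbounded arity at branch points, and the tails of the conductance. But your proposal stops exactly at those two places, gesturing at them rather than resolving them, so it is a plan rather than a proof. Concretely, the paper does not merely ``establish existence, uniqueness and log-integrability'' of some spine chain on simplices; it writes down an \emph{explicit} invariant density
\[
\Upsilon^*_\alpha(\mathrm{d}\t\,\mathrm{d}\mathbf{v})\propto \kappa_\alpha\bigl(\mathcal{C}(\t)\bigr)\,\Theta^*_\alpha(\mathrm{d}\t\,\mathrm{d}\mathbf{v}),\qquad \kappa_\alpha(r)=\sum_{k\geq 2}k\theta_\alpha(k)\int\!\cdots\!\int \frac{r\,t_1}{r+t_1+\cdots+t_k-1}\,\prod_j\gamma_\alpha(\mathrm{d}t_j),
\]
for the shift at branch points (Proposition~\ref{invariantbis}), verifies its invariance by hand using the recursive distributional equation for the conductance, and proves ergodicity by a Markov-chain zero--one argument (Proposition~\ref{ergodicbis}); log-integrability of the relevant functionals is then guaranteed by the formula and by moment bounds on $\gamma_\alpha$, none of which are free.

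The more serious gap is in your ``secondary technical burden.'' When $\alpha<2$ the discrete conductance $\mathcal{C}_n(\mathsf{T}^{*n})$ does \emph{not} have finite second moment, so the Cauchy--Schwarz step that CLG13 uses to transfer the continuous dimension to the discrete one breaks down. The paper replaces it by Lemma~\ref{moment-conductance}, a moment estimate of every order $r<\alpha$ (and no more), obtained via the Nash--Williams inequality and an order-$\gamma$ moment bound for level-set sizes of unconditioned critical trees. The discrete proof then runs the $N$-adic telescoping of $\log\mu_n(\Sigma_n)$ exactly as in CLG13 but pairs this Lemma with Hölder's inequality at exponents $\bar\alpha=(\alpha+1)/2$ and $\alpha^*=\bar\alpha/(\bar\alpha-1)$ rather than two Cauchy--Schwarz applications. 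Without identifying this weakened moment regime and adjusting the Hölder exponents accordingly, the continuous-to-discrete transfer you describe would simply fail for $\alpha\in(1,2)$. Your reductions of the second and third assertions to~\eqref{eq:dim-discrete} are correct and match the paper.
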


The last two assertions of the preceding theorem are easy consequences of the convergence~(\ref{eq:dim-discrete}), as explained in~\cite{CLG13}.

We observe that the hitting distribution $\mu_{n}$ of generation $n$ by simple random walk on $\mathsf{T}^{(n)}$ is unaffected if we remove the branches of $\mathsf{T}^{(n)}$ that do not reach height $n$. Thus in order to establish the preceding result, we may consider simple random walk on $\mathsf{T}^{*n}$, the reduced tree associated with $\mathsf{T}^{(n)}$, which consists of all vertices of $\mathsf{T}^{(n)}$ that have at least one descendant at generation $n$. 

When the critical offspring distribution $\rho$ has infinite variance, scaling limits of the discrete reduced trees $\mathsf{T}^{*n}$ have been studied in~\cite{V77} and~\cite{Y80}. If we scale the graph distances by the factor $n^{-1}$, the discrete reduced trees $n^{-1}\mathsf{T}^{*n}$ converge to a random compact rooted $\R$-tree $\Delta^{(\alpha)}$ that we now describe. For every $\alpha\in(1,2]$, we define the $\alpha$-offspring distribution $\theta_{\alpha}$ as follows. For $\alpha=2$, we let $\theta_{2}=\delta_{2}$ be the Dirac measure at~2. If $\alpha<2$, $\theta_{\alpha}$ is the probability measure on $\mathbb{Z}_{+}$ given by
\begin{eqnarray*}
\theta_{\alpha}(0) &= &\theta_{\alpha}(1)\; = \;0, \\
\theta_{\alpha}(k) &= &\frac{\alpha\, \Gamma(k-\alpha)}{k!\,\Gamma(2-\alpha)}\,=\,\frac{\alpha(2-\alpha)(3-\alpha)\cdots(k-1-\alpha)}{k!}\,, \quad \forall k\geq 2,
\end{eqnarray*}
where $\Gamma(\cdot)$ is the Gamma function. We let $U_{\varnothing}$ be a random variable uniformly distributed over $[0,1]$, and let $K_{\varnothing}$ be a random variable distributed according to $\theta_{\alpha}$, independent of $U_{\varnothing}$. To construct $\Delta^{(\alpha)}$, one starts with an oriented line segment of length $U_{\varnothing}$, whose origin will be the root of the tree. We call $K_{\varnothing}$ the offspring number of the root $\varnothing$. Correspondingly, at the other end of the first line segment, we attach the origins of $K_{\varnothing}$ oriented line segments with respective lengths $U_{1},U_{2},\ldots,U_{K_{\varnothing}}$, such that, conditionally given $U_{\varnothing}$ and $K_{\varnothing}$, the variables $U_{1},U_{2},\ldots,U_{K_{\varnothing}}$ are independent and uniformly distributed over $[0,1-U_{\varnothing}]$. This finishes the first step of the construction. In the second step, for the first of these $K_{\varnothing}$ line segments, we independently sample a new offspring number $K_{1}$ distributed as $\theta_{\alpha}$, and attach $K_{1}$ new line segments whose lengths are again independent and uniformly distributed over $[0,1-U_{\varnothing}-U_{1}]$, conditionally on all the random variables appeared before. For the other $K_{\varnothing}-1$ line segments, we repeat this procedure independently. We continue in this way and after an infinite number of steps we get a random non-compact rooted $\R$-tree, whose completion is the random compact rooted $\R$-tree $\Delta^{(\alpha)}$. We will call $\Delta^{(\alpha)}$ the reduced stable tree of parameter $\alpha$. See Section~\ref{sec:treedelta} for a more precise description. Notice that all the offspring numbers involved in the construction of $\Delta^{(2)}$ are a.s.~equal to 2, which correspond to the  binary branching mechanism. In contrast, this is no longer the case when $1<\alpha<2$. 

We denote by $\mathbf{d}$ the intrinsic metric on $\Delta^{(\alpha)}$. By definition, the boundary $\partial \Delta^{(\alpha)}$ consists of all points of $\Delta\a$ at height 1. As the continuous analogue of simple random walk, we can define Brownian motion on $\Delta\a$ starting from the root and up to the first hitting time of $\partial \Delta\a$. It behaves like linear Brownian motion as long as it stays inside a line segment of $\Delta\a$. It is reflected at the root of $\Delta\a$ and when it arrives at a branching point, it chooses each of the adjacent line segments with equal probabilities. We define the (continuous) harmonic measure $\mu_{\alpha}$ as the (quenched) distribution of the first hitting point of $\partial \Delta\a$ by Brownian motion. 

\begin{theorem}
\label{thm:dim-stable}
For every index $\alpha\in(1,2]$, with the same constant $\beta_{\alpha}$ as in Theorem~\ref{thm:dim-discrete}, we have $\P$-a.s.~$\mu_{\alpha}(\mathrm{d}x)$-a.e., 
\begin{equation}
\label{eq:loc-dim-har}
\lim_{r\downarrow 0} \frac{\log \mu_{\alpha}(\mathcal{B}_{\bd}(x,r))}{\log r}=\beta_{\alpha}\,,
\end{equation}
where $\mathcal{B}_{\bd}(x,r)$ stands for the closed ball of radius $r$ centered at $x$ in the metric space $(\Delta^{(\alpha)},\bd)$. Consequently, the Hausdorff dimension of $\mu_{\alpha}$ is $\P$-a.s.~equal to $\beta_{\alpha}$.
\end{theorem}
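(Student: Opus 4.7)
The plan is to exploit the recursive self-similar structure of $\Delta^{(\alpha)}$ together with an ergodic theorem for a ``shift along the spine'' of the tree, generalizing the strategy used in~\cite{CLG13} for $\alpha=2$.

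First I sample a random boundary point $x$ from $\mu_{\alpha}$, working under the annealed law $\P\otimes\mu_{\alpha}$, and look at the geodesic from the root to $x$. Let $0=H_{0}<H_{1}<H_{2}<\cdots\uparrow 1$ be the heights of the branching points on this geodesic, and for $h\in[0,1)$ define $M(h)$ to be the $\mu_{\alpha}$-mass of the set of boundary points whose ancestor at height $h$ coincides with that of $x$. Since any two boundary points $x,y$ satisfy $\bd(x,y)=2(1-h(x\wedge y))$, where $h(x\wedge y)$ denotes the height of their most recent common ancestor, I obtain the exact identity
\[
\mu_{\alpha}\bigl(\mathcal{B}_{\bd}(x,r)\bigr)\;=\;M\bigl(1-\tfrac{r}{2}\bigr).
\]
Setting $M_{n}:=M(H_{n})$, the local exponent in~(\ref{eq:loc-dim-har}) thereby reduces to the asymptotic ratio $\log M_{n}/\log(1-H_{n})$ as $n\to\infty$.

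The key point is that along the spine the sequence of rescaled tree configurations forms a stationary ergodic Markov chain. By the electrical interpretation of Brownian motion on a tree, at each branching point $b_{n}$ with $K_{n}$ children carrying subtree conductances $C_{n}^{(1)},\ldots,C_{n}^{(K_{n})}$ and with $x$ entering the child indexed by $J_{n}$, one has the multiplicative update $M_{n}/M_{n-1}=C_{n}^{(J_{n})}/\sum_{j}C_{n}^{(j)}$. The recursive construction of $\Delta^{(\alpha)}$ guarantees that after rescaling by $1/(1-H_{n})$ the subtree above $b_{n}$ is an independent copy of $\Delta^{(\alpha)}$; and because $x$ was drawn from $\mu_{\alpha}$, conditional on lying in this subtree it is distributed according to the harmonic measure of the rescaled copy. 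The pair ``rescaled subtree plus marked boundary point'' thus defines a stationary ergodic Markov chain $(\Xi_{n})$ with stationary law $\P\otimes\mu_{\alpha}$. Applying Birkhoff's theorem to the additive functionals $\log M_{n}$ and $\log(1-H_{n})$ then yields, $\P\otimes\mu_{\alpha}$-a.s.,
\[
\frac{\log M_{n}}{n}\,\longrightarrow\,-A_{\alpha},\qquad \frac{\log(1-H_{n})}{n}\,\longrightarrow\,-B_{\alpha},
\]
with $A_{\alpha},B_{\alpha}\in(0,\infty)$ explicit expectations under the stationary law, and I set $\beta_{\alpha}:=A_{\alpha}/B_{\alpha}$. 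Interpolating $M(1-r/2)\in\{M_{n},M_{n+1}\}$ for $r/2\in(1-H_{n+1},1-H_{n}]$ then gives~(\ref{eq:loc-dim-har}) $\P\otimes\mu_{\alpha}$-a.s., and Fubini converts this into the stated $\mu_{\alpha}$-a.e.\ assertion $\P$-a.s. The Hausdorff-dimension conclusion follows from Billingsley's pointwise dimension lemma applied on $(\Delta^{(\alpha)},\bd)$.

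The main obstacle is the explicit analysis of $(\Xi_{n})$ in the non-binary regime $\alpha\in(1,2)$. In the case $\alpha=2$ of~\cite{CLG13} the update depends only on the single splitting ratio $C_{1}/(C_{1}+C_{2})$ between two subtrees, whereas for $\alpha<2$ one must handle the full random vector of conductances across $K\sim\theta_{\alpha}$ subtrees with $K$ possibly large and heavy-tailed. Establishing finiteness and strict positivity of $A_{\alpha}$ and $B_{\alpha}$ and, crucially, the bound $\beta_{\alpha}<1/(\alpha-1)$ will require tail estimates for the conductance $C(\Delta^{(\alpha)})$ obtained from the self-similar distributional fixed-point equation it satisfies, the natural analogue of the equation derived in~\cite{CLG13}. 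This is where I expect the bulk of the technical effort to lie.
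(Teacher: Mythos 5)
The general strategy you outline---following the geodesic from the root to a $\mu_{\alpha}$-typical boundary point, expressing $\mu_{\alpha}(\mathcal{B}_{\bd}(x,r))$ through the multiplicative split at successive branching points, and applying an ergodic theorem along the spine---is indeed the one the paper uses. But your pivotal claim that the chain of rescaled marked subtrees is stationary under $\P\otimes\mu_{\alpha}$ is wrong, and this is not a cosmetic imprecision: it is precisely where the paper's main technical work (Proposition~\ref{invariantbis}) resides. When you pass from $\Delta^{(\alpha)}$ with its $\mu_{\alpha}$-chosen ray to the rescaled subtree containing that ray, the conditional law of the ray given the subtree is indeed that subtree's harmonic measure (this is the flow property, Lemma~\ref{flow-property}); however the \emph{marginal} law of the selected subtree is not $\Theta_{\alpha}$. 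If $\mathcal{C}_{1},\ldots,\mathcal{C}_{N}$ are the conductances of the subtrees at the first branching point, the ray enters subtree $i$ with probability $\mathcal{C}_{i}/\sum_{j}\mathcal{C}_{j}$, so by exchangeability the chosen subtree has a density proportional to $\mathbb{E}\bigl[N\,\mathcal{C}(\mathcal{T})/\bigl(\mathcal{C}(\mathcal{T})+\mathcal{C}_{2}+\cdots+\mathcal{C}_{N}\bigr)\bigr]$ against $\Theta_{\alpha}(\mathrm{d}\mathcal{T})$ --- a genuine size bias by conductance, not the constant $1$. Hence $\Theta_{\alpha}^{*}=\P\otimes\mu_{\alpha}$ is \emph{not} invariant under the spine shift $S$; the actual invariant probability is $\widehat\kappa_{\alpha}(\mathcal{C}(\mathcal{T}))\,\Theta_{\alpha}^{*}(\mathrm{d}\mathcal{T}\,\mathrm{d}\mathbf{v})$ with the specific density $\kappa_{\alpha}$ given in Proposition~\ref{invariantbis}, and proving that $\kappa_{\alpha}$ solves the required fixed-point identity is the crux of the proof.

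Because of this, your formula $\beta_{\alpha}=A_{\alpha}/B_{\alpha}$ with $A_{\alpha},B_{\alpha}$ interpreted as expectations of the additive increments under $\P\otimes\mu_{\alpha}$ would give the wrong constant. Birkhoff's theorem still applies once the correct invariant measure is in hand: it is equivalent to $\Theta_{\alpha}^{*}$, so the almost-sure convergence transfers back to $\P\otimes\mu_{\alpha}$, but the \emph{limit} is the ratio of integrals against $\widehat\kappa_{\alpha}(\mathcal{C}(\mathcal{T}))\,\Theta_{\alpha}^{*}$, not against $\Theta_{\alpha}^{*}$. Without identifying (and verifying invariance of) this density the argument does not close. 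A secondary gap: the strict inequality $\beta_{\alpha}<\tfrac{1}{\alpha-1}$ is not obtained from ``tail estimates for the conductance''; in the paper it follows from a relative-entropy/concavity comparison against the Kesten--Stigum martingale $\mathcal{W}$ (Proposition~\ref{prop:beta<}), which again uses the $S$-invariance of the corrected measure.
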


According to Lemma 4.1 in \cite{LPP95}, the last assertion of the preceding theorem follows directly from~(\ref{eq:loc-dim-har}). As another direct consequence of~(\ref{eq:loc-dim-har}), we have that $\P$-a.s.~for $\mu_{\alpha}(\mathrm{d}x)$-a.e.~$x\in \partial \Delta^{(\alpha)}$, $\mu_{\alpha}(\mathcal{B}_{\bd}(x,r))\to 0$ as $r\downarrow 0$, which is equivalent to non-atomicity of $\mu_{\alpha}$. 

Since it has been proved in \cite[Theorem 1.5]{DLG06} that the Hausdorff dimension of $\partial \Delta^{(\alpha)}$ with respect to $\bd$ is a.s.~equal to $\frac{1}{\alpha-1}$, the previous theorem implies that the harmonic measure has a.s.~strictly smaller Hausdorff dimension than that of the whole boundary of the reduced stable tree. This phenomenon of dimension drop has been shown in~\cite[Theorem 2]{CLG13} for the special case of binary branching $\alpha=2$. 

We prove Theorem~\ref{thm:dim-stable} in Section~\ref{sec:stationary+ergodic}, where our approach is different and shorter than the one developed in~\cite{CLG13} for the special case $\alpha=2$.

Notice that the Hausdorff dimension of the boundary $\partial \Delta^{(\alpha)}$ increases to infinity when $\alpha \downarrow 1$. However, it is an interesting fact that the Hausdorff dimension of the harmonic measure remains bounded when $\alpha \downarrow 1$.
\begin{theorem}
\label{thm:bdd-dimension}
There exists a constant $C>0$ such that for any $\alpha \in (1,2]$, we have $\beta_{\alpha}<C$.
\end{theorem}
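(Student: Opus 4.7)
The plan rests on an explicit representation of $\beta_\alpha$, which one expects to derive in the course of proving Theorem~\ref{thm:dim-stable} along the lines of~\cite{CLG13}. Concretely, via the ergodic theorem applied to the flow-biased Markov chain that tracks the subtree carrying a Brownian particle descending in $\Delta^{(\alpha)}$ towards $\partial \Delta^{(\alpha)}$, one anticipates a formula of the form
\begin{equation*}
\beta_\alpha \;=\; \frac{H(\alpha)}{L(\alpha)}\,,
\end{equation*}
where $H(\alpha)$ is the asymptotic logarithmic decrement per regeneration step of the harmonic mass of the subtree containing the particle, and $L(\alpha)$ is the corresponding logarithmic decrement of its distance to $\partial \Delta^{(\alpha)}$. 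Both should appear as finite integrals against the offspring distribution $\theta_\alpha$, the uniform edge-length law, and the effective conductances of subtrees of $\Delta^{(\alpha)}$.

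Given such a representation, I would separately bound $H(\alpha)$ from above and $L(\alpha)$ from below and show that both are of the same order in $\alpha$. For $H(\alpha)$, the entropy inequality $-\sum p_i \log p_i \leq \log K$ at a branching point with $K \sim \theta_\alpha$ offspring yields $H(\alpha) \leq c_1 + c_2\,\mathbb{E}_{\theta_\alpha}[\log K]$. For $L(\alpha)$, the key observation is that, provided the conductances of the $K$ subtrees hanging from a branching point are comparable uniformly in $\alpha$, the harmonic exit probabilities through each child are of order $1/K$, contributing a term of order $\log K$ to the distance decrement at each step; this gives $L(\alpha) \geq c_3\,\mathbb{E}_{\theta_\alpha}[\log K] - c_4$. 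Combining the two inequalities produces the desired uniform bound $\beta_\alpha \leq C$.

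The main obstacle is the regime $\alpha \downarrow 1$. There $\theta_\alpha(k) \sim c_\alpha\, k^{-1-\alpha}$ has heavier and heavier tail, $\mathbb{E}_{\theta_\alpha}[\log K] \to +\infty$, and both $H(\alpha)$ and $L(\alpha)$ diverge; the whole argument hinges on the \emph{matching} of their rates of divergence. This matching rests on a uniform comparison (with constants independent of $\alpha$) between the effective conductance of a typical subtree of $\Delta^{(\alpha)}$ and the inverse of its height. Such a comparison is plausible via an inductive computation exploiting the exact scaling self-similarity of $\Delta^{(\alpha)}$ built into its recursive definition. Once it is available, the matching of the divergences of $H$ and $L$ becomes transparent, and Theorem~\ref{thm:bdd-dimension} follows at once.
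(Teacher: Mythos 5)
Your approach is genuinely different from the paper's, and it contains a significant gap in its current form. The paper's proof proceeds in two steps that you do not identify. First, it establishes the closed-form expression
\begin{equation*}
\beta_{\alpha}= \frac{1}{2}\bigg(\frac{\big(\int \gamma_{\alpha}(\mathrm{d}s)\, s\big)^{2}}{\iint \gamma_{\alpha}(\mathrm{d}s)\gamma_{\alpha}(\mathrm{d}t)\,\frac{st}{s+t-1}}-1\bigg),
\end{equation*}
in which every trace of $\theta_{\alpha}$ has been absorbed algebraically into the conductance law $\gamma_{\alpha}$; since $\gamma_{\alpha}$ lives on $[1,\infty)$, the denominator is trivially bounded below by $1/2$. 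Second, it proves that the family of reduced stable trees $(\Delta^{(\alpha)})_{\alpha\in[1,2]}$ can be realized as a nested family via a coupling of the offspring variables, so that $\alpha\mapsto \mathcal{C}^{(\alpha)}$ is a.s.\ decreasing, hence $\E[\mathcal{C}^{(\alpha)}]\leq \E[\mathcal{C}^{(1)}]$; the finiteness of $\E[\mathcal{C}^{(1)}]$ is then verified by iterating the distributional map $\Phi_{1}$ twice starting from $\delta_{\infty}$ (Lemma~\ref{lem:conductance1}). The theorem then follows from $\beta_{\alpha}\leq \frac{1}{2}\big(2(\E[\mathcal{C}^{(\alpha)}])^{2}-1\big)$. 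No analysis of $\theta_{\alpha}$-dependent numerator and denominator "divergence rates" is needed.

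The concrete gap in your proposal is twofold. (a) Your framing of the problem as a race between two quantities $H(\alpha)$ and $L(\alpha)$ that both diverge as $\alpha\downarrow 1$ is based on the claim $\E_{\theta_{\alpha}}[\log K]\to+\infty$, but this is false: since $\theta_{\alpha}(k)\sim c_{\alpha}k^{-1-\alpha}$ and $\theta_{\alpha}\to\theta_{1}$ pointwise with $\theta_{1}(k)=\frac{1}{k(k-1)}$, one has $\E_{\theta_{\alpha}}[\log K]\leq \E_{\theta_{1}}[\log K]=\sum_{k\geq 2}\frac{\log k}{k(k-1)}<\infty$, uniformly in $\alpha\in(1,2]$. (Only the \emph{mean} $m_{\alpha}=\frac{\alpha}{\alpha-1}$ blows up.) So your entropy bound on $H(\alpha)$ is already uniform, but then the entire "matching of divergences" strategy collapses, and what remains needed is a uniform \emph{lower} bound on $L(\alpha)$, which you do not address. (b) The key claim you defer --- "a uniform comparison between the effective conductance of a typical subtree of $\Delta^{(\alpha)}$ and the inverse of its height" --- is left entirely unestablished and is not even clearly stated, since every subtree above height $s$ in $\Delta^{(\alpha)}$ has height $1-s$ deterministically; you are implicitly asking for a uniform-in-$\alpha$ tightness of the conductance distribution, which is precisely the nontrivial content that the paper's coupling argument provides.
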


Our proof of Theorem~\ref{thm:bdd-dimension} relies on the fact that the constant $\beta_{\alpha}$ in Theorems~\ref{thm:dim-discrete} and~\ref{thm:dim-stable} can be expressed in terms of the conductance of $\Delta\a$. Informally, if we think of the random tree $\Delta\a$ as a network of resistors with unit resistance per unit length, the effective conductance between the root and the boundary $\partial \Delta\a$ is a random variable which we denote by $\mathcal{C}\a$. From a probabilistic point of view, it is the mass under the Brownian excursion measure for the excursion paths away from the root that hit height 1. Following the definition of $\Delta\a$ and the above electric network interpretation, the distribution of $\mathcal{C}\a$ satisfies the recursive distributional equation
\begin{equation}
\label{eq:rde}
\mathcal{C}^{(\alpha)}\,\overset{(\mathrm{d})}{=\joinrel=}\, \bigg (U + \frac{1-U}{ \mathcal{C}\a_1+ \mathcal{C}\a_2+\cdots+\mathcal{C}\a_{N_{\alpha}}}\bigg)^{-1},
\end{equation} 
where $(\mathcal{C}\a_{i})_{i\geq1}$ are i.i.d.~copies of $\mathcal{C}\a$, the integer-valued random variable $N_{\alpha}$ is distributed according to $\theta_{\alpha}$, and $U$ is uniformly distributed over $[0,1]$. All these random variables are supposed to be independent. 

\begin{proposition}
\label{prop:dim-formula}
For any $\alpha \in (1,2]$, the distribution $\gamma_{\alpha}$ of the conductance $\mathcal{C}\a$ is characterized in the class of all probability measures on $[1,\infty)$ by the distributional equation~(\ref{eq:rde}). The constant $\beta_{\alpha}$ appearing in Theorems \ref{thm:dim-discrete} and~\ref{thm:dim-stable} is given by 
\begin{equation}
\label{eq:beta-value}
\beta_{\alpha}= \frac{1}{2}\bigg(\frac{\big(\int \gamma_{\alpha}(\mathrm{d}s) s\big)^{2}}{\iint \gamma_{\alpha}(\mathrm{d}s)\gamma_{\alpha}(\mathrm{d}t)\frac{st}{s+t-1}}-1\bigg).
\end{equation}
\end{proposition}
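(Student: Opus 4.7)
The statement contains two assertions, which I would treat separately.

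To prove that $\gamma_\alpha$ is the unique probability measure on $[1,\infty)$ satisfying~(\ref{eq:rde}), I would argue by truncation. First observe that the map $T$ induced by the right-hand side of~(\ref{eq:rde}) does send measures on $[1,\infty)$ to measures on $[1,\infty)$: if $\mathcal{C}_i\geq 1$ for all $i$ then $S:=\sum_{i=1}^{N_\alpha}\mathcal{C}_i\geq N_\alpha\geq 2$, and so $S/(US+1-U)\geq 1$. Existence of a fixed point is provided by $\mathcal{C}^{(\alpha)}$ itself. For uniqueness, given any candidate solution $\tilde\gamma$ on $[1,\infty)$, I would build a hybrid random tree $\Delta^{(\alpha)}_n$ by running the iterative construction of $\Delta^{(\alpha)}$ for $n$ generations of branching points, then assigning to each generation-$n$ subtree an independent conductance drawn from $\tilde\gamma$ (rather than continuing the construction). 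By the recursive structure, the total conductance of $\Delta^{(\alpha)}_n$ has law $T^n\tilde\gamma=\tilde\gamma$ (fixed-point property). On the other hand, the heights of the generation-$n$ branching points tend to~$1$ almost surely as $n\to\infty$, so the hybrid conductance converges to the true $\mathcal{C}^{(\alpha)}$; hence $\tilde\gamma=\gamma_\alpha$.

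For the explicit formula~(\ref{eq:beta-value}), I would invoke the stationary-ergodic machinery of Section~\ref{sec:stationary+ergodic} (used to establish Theorem~\ref{thm:dim-stable}). In that framework, $\beta_\alpha$ is identified as $\lim_{k\to\infty}\log\mu_\alpha(T_k)/\log(1-h_k)$ along the path from the root to a $\mu_\alpha$-typical boundary point $X$, through branching points $b_1,b_2,\ldots$ at heights $h_k\uparrow 1$, where $T_k$ is the subtree rooted at $b_k$ containing $X$. By Birkhoff's theorem applied to a suitable shift on ``reduced trees with a distinguished boundary point'' carrying an invariant measure $\pi$, this ratio equals $\E_\pi[\Lambda]/\E_\pi[L]$, where $\Lambda$ is the log-splitting probability and $L$ is the logarithmic scale factor at a typical branching point. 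The remaining task is to rewrite this ratio purely in terms of the one-dimensional law $\gamma_\alpha$. Using~(\ref{eq:rde}), the self-similarity of $\Delta^{(\alpha)}$ and the explicit form of $\pi$, the tree-valued integrals should collapse to finite-dimensional ones: the expected descent produces $(\int s\,\gamma_\alpha(\mathrm{d}s))^2=(\E[\mathcal{C}])^2$ (squared because splitting probabilities are ratios of conductances), while the denominator $\frac{st}{s+t-1}=(1-(1-1/s)(1-1/t))^{-1}$ arises as the effective resistance in a two-walker coalescence analysis at a typical branching point.

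The main difficulty lies in precisely this algebraic reduction. A priori, the invariant measure $\pi$ records the full subtree structure at a typical branching point of the harmonic path, including the variable branching number $N_\alpha$ and all sibling conductances; collapsing this data down to an integral against $\gamma_\alpha\otimes\gamma_\alpha$ requires repeated use of~(\ref{eq:rde}) to absorb the $N_\alpha$-dependence and rewrite sums over the $\mathcal{C}_i$ in terms of a single $\mathcal{C}$. The emergence of exactly two independent copies $\mathcal{C}_1,\mathcal{C}_2$ in the denominator is a reflection of the second-moment (two-walker) nature of the underlying calculation. Once~(\ref{eq:beta-value}) is established, Theorem~\ref{thm:bdd-dimension} should follow from uniform-in-$\alpha$ bounds on $\int s\,\gamma_\alpha(\mathrm{d}s)$ and $\iint \frac{st}{s+t-1}\,\gamma_\alpha(\mathrm{d}s)\gamma_\alpha(\mathrm{d}t)$.
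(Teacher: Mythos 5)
Your argument splits into two parts, and they fare quite differently.

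For the first assertion (uniqueness of $\gamma_\alpha$ under the RDE), your truncation-and-coupling idea is genuinely different from the paper's proof, which establishes that the map $\Phi_\alpha$ is a strict contraction on the space $\mathscr{M}_1$ of probability measures with finite mean, endowed with the $1$-Wasserstein metric, with explicit constant $c_\alpha=1+\mathbb{E}\big[\tfrac{N_\alpha-1-N_\alpha\log N_\alpha}{(N_\alpha-1)^2}\big]<1$, and then extends from $\mathscr{M}_1$ to all probability measures via monotonicity and the key computation that $\Phi_\alpha^2(\delta_\infty)$ has finite first moment. Your approach, as stated, has a gap precisely at the step you elide: the claim that the hybrid conductance (with generation-$n$ boundary values drawn from an arbitrary $\tilde\gamma$) converges to $\mathcal{C}^{(\alpha)}$ needs justification, since a heavy-tailed or degenerate $\tilde\gamma$ could in principle distort the total conductance substantially even when the branching heights are close to $1$. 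To make it rigorous you would want to sandwich by the extreme choices $\tilde\gamma=\delta_1$ and $\tilde\gamma=\delta_\infty$ and use monotonicity of the conductance in its boundary values; showing the $\delta_\infty$ endpoint converges is essentially the same moment estimate the paper performs on $\Phi_\alpha^2(\delta_\infty)$. So your route can be repaired, but the repair requires the paper's computation.

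For the explicit formula \eqref{eq:beta-value}, your proposal correctly identifies the ergodic framework — Birkhoff's theorem applied to an invariant measure on trees with a distinguished harmonic ray, giving $\beta_\alpha$ as a ratio of two stationary integrals (this is the paper's formula \eqref{eq:beta2}) — and you correctly anticipate that one must use \eqref{eq:rde} to collapse the full tree data down to integrals against $\gamma_\alpha$. However, you explicitly acknowledge that ``the main difficulty lies in precisely this algebraic reduction,'' and you do not carry it out. The missing ingredient is the identity \eqref{eq:star},
\begin{equation*}
\mathbb{E}\big[\mathcal{C}_1(\mathcal{C}_1-1)g'(\mathcal{C}_1)\big]+\mathbb{E}\big[g(\mathcal{C}_1)\big]=\mathbb{E}\big[g(\mathcal{C}_1+\cdots+\mathcal{C}_{N_\alpha})\big],
\end{equation*}
valid for monotone $C^1$ functions $g$, which converts expectations over the sum $\mathcal{C}_1+\cdots+\mathcal{C}_{N_\alpha}$ into expectations involving a single copy of $\mathcal{C}$. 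The paper applies this repeatedly (together with explicit integration over the uniform $U$ variable) to pass from \eqref{eq:beta2} to \eqref{eq:beta1} and finally to \eqref{eq:beta-value}. Your heuristic — that the denominator $\tfrac{st}{s+t-1}$ reflects a two-walker coalescence — is suggestive but does not substitute for this derivation; without identifying \eqref{eq:star} the reduction is not constructive, so this half of your proposal is a plan rather than a proof.
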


Interestingly, formula~(\ref{eq:beta-value}) expresses the exponent $\beta_{\alpha}$ as the same function of the distribution $\gamma_{\alpha}$, for all $\alpha\in (1,2]$. In the course of the proof, we obtain two other formulas for $\beta_{\alpha}$ (see~(\ref{eq:beta2}) and~(\ref{eq:beta1}) below), but they both depend on $\alpha$ in a more complicated way, which also involves the distribution $\theta_{\alpha}$.

The paper is organized as follows. In Section 2 below, we study the continuous model of Brownian motion on $\Delta\a$. A formal definition of the reduced stable tree $\Delta\a$ is given in Section~\ref{sec:treedelta}. In Section~\ref{sec:ctgwtree} we explain how to relate $\Delta\a$ to an infinite supercritical continuous-time Galton-Watson tree $\Gamma\a$, and we reformulate Theorem~\ref{thm:dim-stable} in terms of Brownian motion with drift $1/2$ on $\Gamma\a$. Properties of the law of the random conductance $\mathcal{C}\a$, including the first assertion of Proposition~\ref{prop:dim-formula}, are discussed in Section~\ref{sec:conductance}, and Section~\ref{sec:coupling} gives the coupling argument that allows one to derive Theorem~\ref{thm:bdd-dimension} from formula~(\ref{eq:beta-value}). Section~\ref{sec:stationary+ergodic} is devoted to the proofs of Theorem~\ref{thm:dim-stable} and of formula~(\ref{eq:beta-value}). We emphasize that our approach to Theorem~\ref{thm:dim-stable} is different from the one used in~\cite{CLG13} when $\alpha=2$. In fact we use an invariant measure for the environment seen by Brownian motion on $\Gamma\a$ at the last passage time of a node of the $n$-th generation, instead of the last passage time at a height $h$ as in~\cite{CLG13}. We then apply the ergodic theory on Galton-Watson trees, which is a powerful tool initially developed in~\cite{LPP95}.
  
In Section 3 we proceed to the discrete setting concerning simple random walk on the discrete reduced tree $\mathsf{T}^{*n}$. Let us emphasize that, when the critical offspring distribution $\rho$ is in the domain of attraction of a stable distribution of index $\alpha\in(1,2)$, the convergence of discrete reduced trees is less simple than in the special case $\alpha =2$ where we have a.s.~a binary branching structure. See Proposition~\ref{conv-reduced-tree} for a precise statement in our more general setting. Apart from this ingredient, we need several estimates for the discrete reduced tree $\mathsf{T}^{*n}$ to derive Theorem~\ref{thm:dim-discrete} from Theorem~\ref{thm:dim-stable}. For example, Lemma~\ref{lem:level-size} gives a bound for the size of level sets in $\mathsf{T}^{*n}$, and Lemma~\ref{moment-conductance} presents a moment estimate for the (discrete) conductance $\mathcal{C}_{n}(\mathsf{T}^{*n})$ between generations 0 and $n$ in $\mathsf{T}^{*n}$. Although the result analogous to Lemma~\ref{moment-conductance} in~\cite{CLG13} is a second moment estimate, we only manage to give a moment estimate of order strictly smaller than $\alpha$ if the critical offspring distribution $\rho$ satisfies~(\ref{eq:stable-attraction}) with $\alpha\in (1,2]$. Nevertheless, this is sufficient for our proof of Theorem~\ref{thm:dim-discrete}, which is adapted from the one given in~\cite{CLG13}. 

Comments and several open questions are gathered in the last section. Following the work of A\"\i d\'ekon~\cite{Aid11}, we obtain a candidate for the speed of Brownian motion with drift $1/2$ on the infinite tree $\Gamma\a$, expressed by~(\ref{eq:speed}) in terms of the continuous conductance $\mathcal{C}\a$. Nonetheless, the monotonicity properties of this quantity remains open. It would also be of interest to know whether or not the Hausdorff dimension $\beta_{\alpha}$ of the continuous harmonic measure $\mu_{\alpha}$ is monotone with respect to $\alpha \in(1,2]$.

\smallskip
\noindent \textbf{Acknowledgments.} The author is deeply indebted to J.-F.~Le Gall and N.~Curien for many helpful suggestions during the preparation of this paper. 

\section{The continuous setting}

\subsection{The reduced stable tree}
\label{sec:treedelta}

We set 
$$\mathcal{V} = \bigcup_{n=0}^\infty \mathbb{N}^n$$
where by convention $\mathbb{N}=\{1,2,\ldots\}$ and $\mathbb{N}^0=\{\varnothing\}$. If $v=(v_1,\ldots,v_n)\in\mathcal{V}$, we set $|v|=n$ (in particular, $|\varnothing|=0$),  and if $n\geq 1$,
we define the parent 
of $v$ as $\widehat v=(v_1,\ldots,v_{n-1})$ and then say that $v$ is a child of $\wh v$. For two elements $v=(v_1,\ldots,v_n)$ and $v'=(v'_1,\ldots,v'_m)$ belonging to $\mathcal{V}$, their concatenation is $vv'\colonequals (v_1,\ldots,v_n,v'_1,\ldots,v'_m)$. The notions of a descendant and an ancestor of an element of $\v$ are defined in the obvious way, with the convention that every $v\in \v$ is both an ancestor and a descendant of itself. If $v,w\in\v$, $v\wedge w$ is the unique element of $\v$ such that it is a common ancestor of $v$ and $w$, and $|v\wedge w|$ is maximal. 

An infinite subset $\Pi$ of $\mathcal{V}$ is called an infinite discrete tree if there exists a collection of positive integers $k_{v}=k_{v}(\Pi)\in \N$ for every $v\in \mathcal{V}$ such that 
\begin{displaymath}
\Pi=\{\varnothing\}\cup\{(v_{1},\ldots,v_{n})\in \mathcal{V}: v_{j}\leq k_{(v_{1},\ldots,v_{j-1})} \mbox{ for every } 1\leq j\leq n\}.
\end{displaymath}

Recall the definition of the $\alpha$-offspring distribution $\theta_{\alpha}$ for $\alpha\in (1,2]$. It will also be convenient to consider the case $\alpha=1$, where we define $\theta_{1}$ as the probability measure on $\Z_{+}$ given by 
\begin{eqnarray*}
\theta_{1}(0) &= &\theta_{1}(1)\; = \;0, \\
\theta_{1}(k) &= &\frac{1}{k(k-1)}\,, \quad \forall k\geq 2.
\end{eqnarray*}
If $\alpha \in(1,2]$, the generating function of $\theta_{\alpha}$ is given (see e.g.~\cite[p.74]{DLG02}) as 
\begin{equation}
\label{eq:gene-fct}
\sum\limits_{k\geq 0}^{}\theta_{\alpha}(k)\,r^k =\frac{(1-r)^{\alpha}-1+\alpha r}{\alpha-1},\quad \forall r\in (0,1],
\end{equation}
while for $\alpha=1$,
\begin{equation}
\label{eq:gene-fct-1}
\sum\limits_{k\geq 0}^{}\theta_1(k)\,r^k= r+(1-r)\log(1-r) ,\quad \forall r\in (0,1].
\end{equation}
Notice that for $\alpha\in(1,2]$, the mean of $\theta_{\alpha}$ is given by 
\begin{equation*}
m_{\alpha}=\frac{\alpha}{\alpha-1} \in[2,\infty),
\end{equation*}
whereas $\theta_{1}$ has infinite mean.  

For fixed $\alpha\in [1,2]$, we introduce a collection $(K_{\alpha}(v))_{v\in\v}$ of independent random variables distributed according to $\theta_{\alpha}$ under the probability measure $\P$, and define a random infinite discrete tree 
\begin{displaymath}
\Pi^{(\alpha)}\colonequals \{\varnothing\}\cup\{(v_{1},\ldots,v_{n})\in \v\colon v_{j}\leq K_{\alpha}((v_{1},\ldots,v_{j-1})) \mbox{ for every }1\leq j\leq n\}\,.
\end{displaymath}
We point out that $\Pi^{(2)}$ is an infinite binary tree.

Let $(U_v)_{v\in\v}$ be another collection, independent of $(K_{\alpha}(v))_{v\in\v}$, consisting of independent real random variables uniformly distributed over $[0,1]$ under the same probability measure~$\P$. We set now
$$Y_\varnothing=U_\varnothing$$
and then by induction, for every $v\in \Pi^{(\alpha)}\setminus \{\varnothing\}$,
$$Y_v = Y_{\hat v} + U_v(1- Y_{\hat v}).$$
Note that  a.s.~$0\leq Y_v< 1$ for every $v\in \Pi^{(\alpha)}$. Consider then the set
$$\Delta^{(\alpha)}_{0} \colonequals \big(\{\varnothing\}\times [0,Y_\varnothing] \big)\cup  \bigg(\bigcup_{v\in\Pi^{(\alpha)}\backslash\{\varnothing\}} \{v\} \times (Y_{\hat v}, Y_v]\bigg).$$
There is a straightforward way to define a metric $\bd$ on $\Delta^{(\alpha)}_{0}$, so that 
$(\Delta^{(\alpha)}_{0},\bd)$ is a (noncompact) $\R$-tree and, for every $x=(v,r)\in \Delta^{(\alpha)}_{0}$, we have $\bd((\varnothing,0), x)=r$. To be specific, let $x=(v,r)\in\Delta^{(\alpha)}_{0}$ and $y=(w,r')\in\Delta^{(\alpha)}_{0}$:
\begin{enumerate}
\item[$\bullet$] If $v$ is a descendant (or an ancestor) of $w$, we set $\bd(x,y)= |r-r'|$.
\item[$\bullet$] Otherwise, $\bd(x,y)= \bd((v\wedge w,Y_{v\wedge w}),x)+ \bd((v\wedge w,Y_{v\wedge w}),y)
=(r-Y_{v\wedge w})+(r'-Y_{v\wedge w})$.
\end{enumerate}
See \cref{fig:Delta} for an illustration of the tree $\Delta^{(\alpha)}_{0}$ when $\alpha <2$.

\begin{figure}[!h]
 \begin{center}
 \includegraphics[width=13cm]{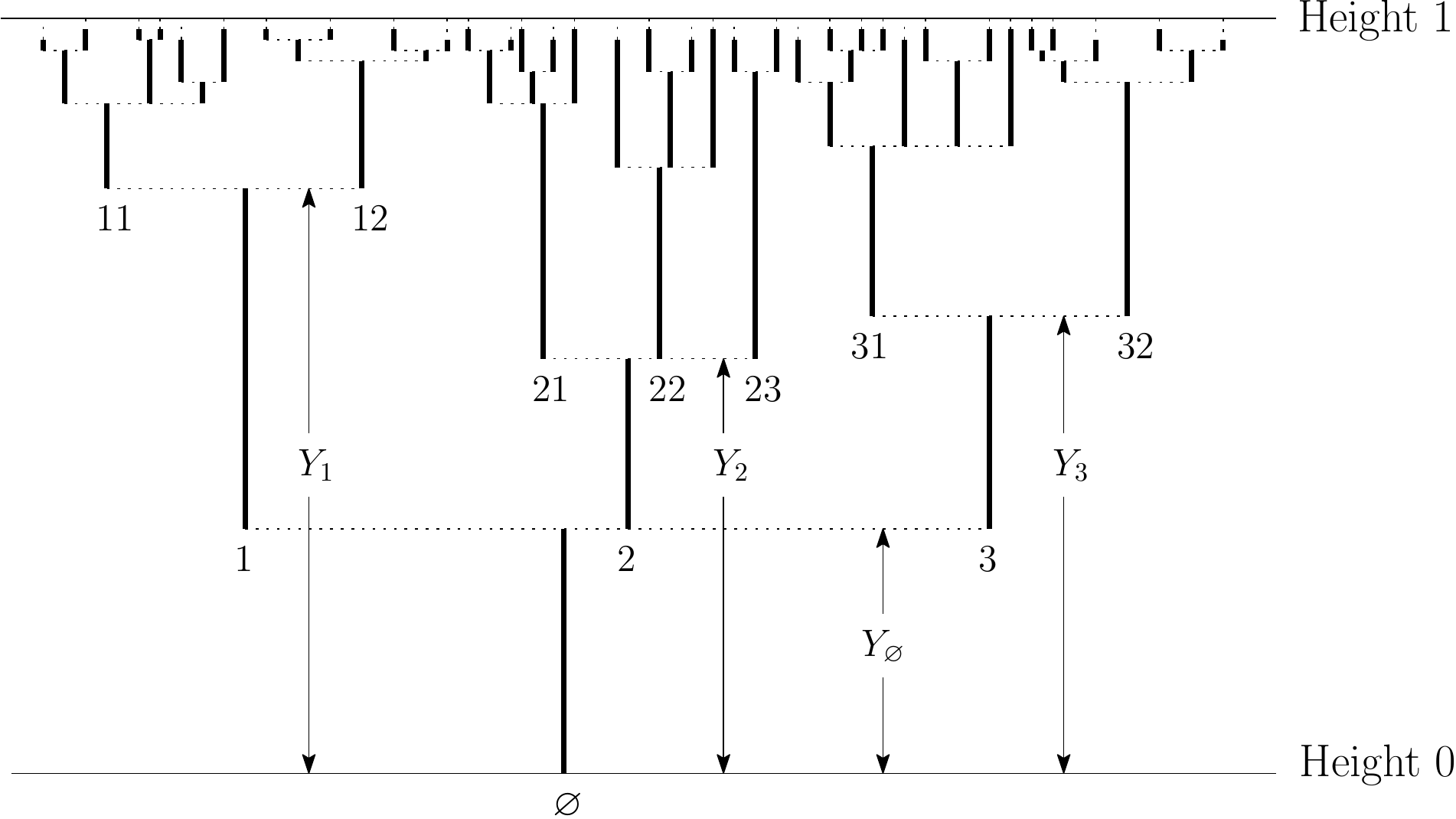}
 \caption{\label{fig:Delta}The random tree $\Delta^{(\alpha)}_{0}$ when $1\leq \alpha<2$}
 \end{center}
 \end{figure}

We let $\Delta^{(\alpha)}$ be the completion of $\Delta^{(\alpha)}_{0}$ with respect to the metric $\bd$. Then
$$ \Delta^{(\alpha)}=\Delta^{(\alpha)}_{0} \cup \partial \Delta^{(\alpha)}$$
where by definition $\partial \Delta^{(\alpha)}\colonequals \{x\in\Delta^{(\alpha)}\colon \bd((\varnothing,0), x)=1\}$, which can be identified with a random subset of $\N^{\N}$. It is immediate to see that $(\Delta^{(\alpha)},\bd)$ is a compact $\R$-tree, which we will call the reduced stable tree of index $\alpha$.

The point $(\varnothing,0)$ is called the root of $\Delta^{(\alpha)}$. For every $x\in\Delta^{(\alpha)}$, we set $H(x)=\bd((\varnothing,0), x)$
and call $H(x)$ the height of $x$. We can  define a genealogical order on $\Delta^{(\alpha)}$ by setting $x\prec y$ if and only if $x$ belongs to the geodesic path from the root to $y$.

For every $\ve\in(0,1)$, we set
$$\Delta_{\ve}^{(\alpha)}\colonequals \{x\in\Delta^{(\alpha)}\colon H(x)\leq 1-\ve\},$$
which is also a compact $\R$-tree for the metric $\bd$. 
The leaves of $\Delta_\ve^{(\alpha)}$ are the points of the form $(v,1-\ve)$ for all $v\in\v$ such that $Y_{\hat v}< 1-\ve\leq Y_v$. The branching points of $\Delta_{\ve}^{(\alpha)}$ are the points of the form $(v,Y_v)$ for all $v\in \mathcal{V}$ such that $Y_v<1-\ve$.  

Now conditionally on $\Delta^{(\alpha)}$, we can define Brownian motion on $\Delta_{\ve}^{(\alpha)}$ starting from the root. Informally, this process behaves like linear Brownian motion as long as it stays on an ``open interval'' of the form $\{v\}\times (Y_{\hat v},Y_v\wedge (1-\ve))$, and it is reflected at the root $(\varnothing,0)$ and at the leaves of $\Delta_{\ve}^{(\alpha)}$. When it arrives at a branching point of the tree, it chooses each of the possible line segments ending at this point with equal probabilities. By taking a sequence $\ve_n=2^{-n}, n\geq 1$ and then letting $n$ go to infinity, we can construct under the same probability measure $P$ a Brownian motion $B$ on $\Delta^{(\alpha)}$ starting from the root, which is defined up to its first hitting time $T$ of $\partial \Delta^{(\alpha)}$. We refer the reader to~\cite{CLG13} for the details of this construction. The harmonic measure $\mu_{\alpha}$ is then the distribution of $B_{T-}$ under~$P$, which is a (random) probability measure on $\partial \Delta^{(\alpha)}\subseteq \N^\N$.

\subsection{The continuous-time Galton-Watson tree}
\label{sec:ctgwtree}

In this subsection, we introduce a new tree which shares the same branching structure as $\Delta^{(\alpha)}$, such that each point of $\Delta^{(\alpha)}$ at height $s\in [0,1)$ corresponds to a point of the new tree at height $-\log(1-s)\in [0,\infty)$ in a bijective way. As it turns out, this new random tree is a continuous-time Galton-Watson tree. 

To define it, we take $\alpha\in [1,2]$ and start with the same random infinite tree $\Pi^{(\alpha)}$ introduced in Section~\ref{sec:treedelta}. Consider now a collection $(V_v)_{v\in\mathcal{V}}$ of independent real random variables exponentially distributed with mean $1$ under the probability measure $\P$. We set
$$Z_\varnothing=V_\varnothing$$
and then by induction, for every $v\in \Pi^{(\alpha)}\setminus\{\varnothing\} $,
$$Z_v = Z_{\hat v} + V_v.$$
The continuous-time Galton-Watson tree (hereafter to be called CTGW tree for short) of stable index~$\alpha$ is the set
$$\Gamma^{(\alpha)} \colonequals\big(\{\varnothing\}\times [0,Z_\varnothing]\big) \cup  \bigg(\bigcup_{v\in \Pi^{(\alpha)}\backslash\{\varnothing\}} \{v\} \times (Z_{\hat v}, Z_v]\bigg),$$
which is equipped with the metric $d$ defined in the same way as $\bd$ in the preceding subsection. For this metric, $\Gamma^{(\alpha)}$ is a non-compact $\R$-tree. For every $x=(v,r)\in\Gamma^{(\alpha)}$, we keep the notation $H(x)=r=d((\varnothing,0),x)$ for the height of the point $x$.  

Now observe that if $U$ is uniformly distributed over $[0,1]$, the random variable $-\log(1-U)$ is exponentially distributed with mean $1$. Hence we may and will suppose that the collection $(V_v)_{v\in\mathcal{V}}$ is constructed from the collection $(U_v)_{v\in\mathcal{V}}$ in the previous subsection via the formula $V_v=-\log(1-U_v)$ for every $v\in\mathcal{V}$. Then, the mapping $\Psi$ defined on $\Delta^{(\alpha)}_{0}$ by 
\begin{displaymath}
\Psi(v,r)\colonequals \big(v,-\log(1-r)\big) \quad \mbox{ for every }(v,r)\in\Delta^{(\alpha)}_{0},
\end{displaymath}
is a homeomorphism from $\Delta^{(\alpha)}_{0}$ onto $\Gamma^{(\alpha)}$.

By stochastic analysis, we can write for every $t\in[0,T)$,
\begin{equation}
\label{BM-CTGW}
\Psi(B_t) =W\Big(\int_0^t (1-H(B_s))^{-2}\,\mathrm{d}s\Big)
\end{equation}
where $(W(t))_{t\geq 0}$ is Brownian motion with constant drift $1/2$ towards infinity on the CTGW tree~$\Gamma\a$ (this process is defined in a similar way as Brownian motion on $\Delta^{(\alpha)}_{\ve}$, except that it behaves like Brownian motion with drift $1/2$ on every ``open interval'' of the tree). Note that again $W$ is defined under the probability measure $P$. From now on, when we speak about Brownian motion on the CTGW tree or on other similar trees, we will always mean Brownian motion with drift $1/2$ towards infinity.

By definition, the boundary of $\Gamma^{(\alpha)}$ is the set of all infinite geodesics in $\Gamma^{(\alpha)}$ starting from the root $(\varnothing,0)$ (these are called geodesic rays), and it can be canonically embedded into $\N^{\N}$. Due to the transience of Brownian motion on $\Gamma^{(\alpha)}$, there is an a.s.~unique geodesic ray denoted by $W_\infty$ that is visited by $(W(t))_{t\geq 0}$ at arbitrarily large times. We say that $W_\infty$ is the exit ray of Brownian motion on $\Gamma^{(\alpha)}$. The distribution of $W_\infty$ under $P$ yields a probability measure $\nu_{\alpha}$ on $\N^{\N}$. Thanks to \eqref{BM-CTGW}, we have in fact $\nu_{\alpha}=\mu_{\alpha}$, provided we think of both $\mu_{\alpha}$ and $\nu_{\alpha}$ as (random) probability measures on $\N^{\N}$. The statement of~\cref{thm:dim-stable} is then reduced to checking that for every $1<\alpha\leq 2$, $\P$-a.s., $\nu_{\alpha}(\mathrm{d}y)$-a.e.
\begin{equation}
\label{eq:1}
\lim_{r\to \infty} \frac{1}{r}\log \nu_{\alpha}(\mathcal{B}(y,r))=-\beta_{\alpha}\,,
\end{equation}
where $\mathcal{B}(y,r)$ denotes the set of all geodesic rays that coincide with $y$ up to height $r$. 

\medskip
\noindent{\bf Infinite continuous trees.} To prove~\eqref{eq:1}, we will apply the tools of ergodic theory to certain transformations on a space of finite-degree rooted infinite continuous trees that we now describe. We let $\T$ be the set of all pairs $(\Pi,(z_v)_{v\in\Pi})$ that satisfy the following conditions:
\begin{enumerate}
\item[(1)] $\Pi$ is an infinite discrete tree, in the sense of Section~\ref{sec:treedelta}.
\item[(2)] We have
\begin{enumerate}
\item[\rm(i)] $z_{v}\in [0,\infty)$ for all $v \in \Pi$\,;
\item[\rm(ii)] $z_{\hat v}< z_v$ for every $v\in\Pi\backslash\{\varnothing\}$\,;
\item[\rm(iii)] for every $\mathbf{v}\in \Pi_{\infty}\colonequals\{(v_1,v_2,\ldots,v_{n},\ldots)\in \N^{\N}\colon (v_{1},v_{2},\ldots,v_{n})\in \Pi, \forall n\geq 1\}$,
$$\lim_{n\to\infty} z_{(v_1,\ldots,v_n)} =+\infty.$$ 
\end{enumerate}
\end{enumerate}
In the preceding definition, we allow the possibility that $z_\varnothing=0$. Notice that property \rm(iii) implies that $\# \{v\in \Pi\colon z_v\leq r\}< \infty$ for every $r>0$.

We equip $\T$ with the $\sigma$-field generated by the coordinate mappings. If $(\Pi,(z_v)_{v\in\Pi})\in\T$, we can consider the associated ``tree''
$$\t\colonequals \big(\{\varnothing\}\times [0,z_\varnothing]\big) \cup  \bigg(\bigcup_{v\in\Pi\backslash\{\varnothing\}} \{v\} \times (z_{\hat v}, z_v]\bigg),$$
equipped with the distance defined as above. The set $\Pi_{\infty}$ is identified with the collection of all geodesic rays in $\Pi$, and will be viewed as the boundary of the tree $\t$. We keep the notation $H(x)=r$ for the height of a point $x=(v,r)\in\t$. The genealogical order on $\t$ is defined as previously and again is denoted by $\prec$. If $\mathbf{u}=(u_1,u_2,\ldots)\in \Pi_{\infty}$, and $x=(v,r)\in \t$, we write $x\prec \mathbf{u}$ if $v=(u_1,u_2,\ldots,u_k)$ for some integer $k\geq 0$.

We will often abuse notation and say that we consider a tree $\t\in \T$: This means that we are given a pair $(\Pi,(z_v)_{v\in\Pi})$ satisfying the above properties, and we consider the associated tree $\t$. In particular, $\t$ has an order structure (in addition to the genealogical partial order) given by the lexicographical order on $\Pi$. Elements of $\T$ will be called infinite continuous trees. Clearly, for every stable index $\alpha \in [1,2]$, the CTGW tree $\Gamma^{(\alpha)}$ can be viewed as a random variable with values in $\T$, and we write $\Theta_{\alpha}(\mathrm{d}\t)$ for its distribution. 

Let us fix $\t=(\Pi,(z_v)_{v\in\Pi})\in \mathbb{T}$. Under our previous notation, $k_{\varnothing}$ is the number of offspring at the first branching point of $\t$. We denote by $\t_{(1)},\t_{(2)},\ldots,\t_{(k_{\varnothing})}$ the subtrees of $\t$ obtained at the first branching point. To be more precise, for every $1\leq i\leq k_{\varnothing}$, we define the shifted discrete tree $\Pi[i]=\{v\in \mathcal{V}\colon iv\in \Pi\}$, and $\t_{(i)}$ is the infinite continuous tree corresponding to the pair
\begin{displaymath}
\Big(\Pi[i],(z_{iv}-z_{\varnothing})_{v\in \Pi[i]}\Big).
\end{displaymath}
Under $\Theta_{\alpha}(\mathrm{d}\t)$, we know by definition that $k_{\varnothing}$ is distributed according to $\theta_{\alpha}$. Moreover, conditionally on $k_{\varnothing}$, the branching property of the CTGW tree states that the subtrees $\t_{(1)},\ldots,\t_{(k_{\varnothing})}$ are i.i.d.~following the same law $\Theta_{\alpha}$.

If $r>0$, the level set of $\t\in \T$ at height $r$ is
$$\t_r= \{x\in\t \colon H(x)=r\}.$$ 
For $\alpha\in (1,2]$, we have the classical result
$$\E\big[\#\Gamma_r^{(\alpha)}\big]= \exp \big(\frac{r}{\alpha-1}\big)=\exp \big((m_{\alpha}-1)r\big)\,,$$ 
which can be derived from the following identity (see e.g.~Theorem 2.7.1 in~\cite{DLG02}) stating that for every $u>0$,
\begin{displaymath}
\E\big[\exp (-u\,\#\Gamma_r^{(\alpha)})\big]=1-\big[1-e^{-r}(1-(1-e^{-u})^{1-\alpha})\big]^{\frac{1}{1-\alpha}}\,.
\end{displaymath}

\subsection{The continuous conductance}
\label{sec:conductance}
Recall that, for $\alpha \in[1,2]$, the random variable $\mathcal{C}^{(\alpha)}$ is defined as the conductance between the root and the set $\partial \Delta^{(\alpha)}$ in the continuous tree $\Delta\a$ viewed as an electric network. One can also give a more probabilistic definition of the conductance. If $\t$ is a (deterministic) infinite continuous tree, the conductance $\mathcal{C}(\t)$ between the root and the boundary $\partial\t$ can be defined in terms of excursion measures of Brownian motion with drift $1/2$ on $\t$. Under this definition, we can set $\mathcal{C}^{(\alpha)}=\mathcal{C}(\Gamma\a)\in [1,\infty)$. For details, we refer the reader to Section 2.3 in~\cite{CLG13}.

In this subsection, we will prove for $\alpha \in (1,2]$ that the law of $ \mathcal{C}\a$ is characterized by the distributional identity~\eqref{eq:rde} in the class of all probability measures on $[1,\infty)$, and discuss some of the properties of this law. For $u \in (0,1), n\in \N$ and $(x_{i})_{i\geq 1}\in [1,\infty)^{\N}$, we define 
\begin{equation*} 
G(u,n,(x_{i})_{i\geq1}) \colonequals \left( u + \frac{1-u}{x_{1}+x_{2}+\cdots+x_{n}}\right)^{-1}, 
\end{equation*} 
so that \eqref{eq:rde} can be rewritten as 
\begin{equation}
\label{rde-bis}
\mathcal{C}\a \overset{(\mathrm{d})}{=} G(U, N_{\alpha}, (\mathcal{C}\a_{i})_{i\geq 1})
\end{equation} 
where $U, N_{\alpha}, (\mathcal{C}\a_{i})_{i\geq 1}$ are as in~\eqref{eq:rde}. Note that (\ref{rde-bis}) also holds for $\alpha=1$. Let $ \mathscr{M}$ be the set of all probability measures on $[1, \infty]$ and let $\Phi_{\alpha} \colon \mathscr{M} \to \mathscr{M}$ map a distribution $\sigma$ to 
\begin{equation*} 
\Phi_{\alpha}(\sigma) = \mathsf{Law} \big(G(U,N_{\alpha},(X_{i})_{i\geq1})\big) 
\end{equation*} 
where $(X_{i})_{i\geq1}$ are independent and identically distributed according to $\sigma$, while $U, N_{\alpha}$ are as in~\eqref{eq:rde}. We suppose in addition that $U, N_{\alpha}$ and $(X_{i})_{i\geq1}$ are independent.   

We write $\gamma_{\alpha}$ for the distribution of $\mathcal{C}\a$, and define for all $\ell \geq 0$ the Laplace transform 
\begin{displaymath}
\varphi_{\alpha}(\ell) \colonequals \E\big[\exp(-\ell\,\mathcal{C}\a/2)\big] =\int_1^\infty e^{-\ell r/2}\,\gamma_{\alpha}(\mathrm{d}r).
\end{displaymath}

\begin{proposition} 
\label{prop:unique}
Let us fix the stable index $\alpha \in (1,2]$.
The law $\gamma_{\alpha}$ of $\mathcal{C}\a$ is the unique fixed point of the mapping $\Phi_{\alpha}$ on $\mathscr{M} $, and we have  $\Phi_{\alpha}^k(\sigma) \to \gamma_{\alpha}$ weakly as $k \to \infty$, for every $\sigma \in \mathscr{M}$. Furthermore,
\begin{enumerate}
\item\label{item:unique2} If $\alpha=2$, all moments of $\gamma_{2}$ are finite, and $\gamma_{2}$ has a continuous density over $[1,\infty)$. The Laplace transform $\varphi_{2}$ solves the differential equation
\begin{equation*} 
2\ell\,\varphi''(\ell) + \ell \varphi'(\ell) + \varphi^2(\ell) - \varphi(\ell)=0. 
\end{equation*}

\item\label{item:unique} If $\alpha\in(1,2)$, only the first and the second moments of $\gamma_{\alpha}$ are finite. The distribution $\gamma_{\alpha}$ has a continuous density over $[1,\infty)$, and the Laplace transform $\varphi_{\alpha}$ solves the differential equation
\begin{equation}
\label{eq:laplace}
2\ell\,\varphi''(\ell) + \ell \varphi'(\ell) + \frac{(1-\varphi(\ell))^{\alpha}+\varphi(\ell)-1}{\alpha-1}=0.
\end{equation}

\end{enumerate}
\end{proposition}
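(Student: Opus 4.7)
My plan addresses the four assertions in turn: the ODE, the convergence and uniqueness, and the density and moment properties. I would start with the ODE because it is the structural ingredient from which the other analytic facts can be read off most cleanly.

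For the ODE, I condition on $S\colonequals \mathcal{C}\a_1+\cdots+\mathcal{C}\a_{N_\alpha}$ and integrate out the uniform variable $U$. Setting
$$h(\ell,s)\colonequals \int_0^1 \exp\!\Bigl(-\frac{\ell}{2}\bigl(u+(1-u)/s\bigr)^{-1}\Bigr)\,\mathrm{d}u,$$
one has $\varphi_\alpha(\ell)=\E[h(\ell,S)]$ by \eqref{rde-bis}. The change of variable $v=\ell/(2(u+(1-u)/s))$ yields the closed form $h(\ell,s)=\frac{\ell s}{2(s-1)}\int_{\ell/2}^{\ell s/2}v^{-2}e^{-v}\,\mathrm{d}v$, and a direct computation then gives the pointwise identity
$$2\ell\, h_{\ell\ell}(\ell,s)+\ell\, h_\ell(\ell,s)-h(\ell,s)=-e^{-\ell s/2}.$$
Averaging over $S$ and using the branching identity $\E[e^{-\ell S/2}]=G_\alpha(\varphi_\alpha(\ell))$, where $G_\alpha$ is the generating function of $\theta_\alpha$ given by \eqref{eq:gene-fct}, I conclude that
$$2\ell\,\varphi_\alpha''(\ell)+\ell\,\varphi_\alpha'(\ell)+G_\alpha(\varphi_\alpha(\ell))-\varphi_\alpha(\ell)=0,$$
which specialises to both displayed ODEs of the proposition after using $G_2(r)=r^2$ and $G_\alpha(r)-r=((1-r)^\alpha-1+r)/(\alpha-1)$.

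For the convergence $\Phi_\alpha^k(\sigma)\to\gamma_\alpha$ and uniqueness of $\gamma_\alpha$, my approach is a sandwich via Rayleigh's monotonicity principle. The iterate $\Phi_\alpha^k(\sigma)$ is the law of the conductance of the tree $\Pi\a$ truncated at generation $k$, with iid terminal conductances drawn from $\sigma$ attached to the leaves. Coupling three versions on the same realisation of $\Pi\a$ with terminal conductances $1$, $X\sim\sigma$, and $+\infty$ gives $\Phi_\alpha^k(\delta_1)\preceq\Phi_\alpha^k(\sigma)\preceq\Phi_\alpha^k(\delta_\infty)$ stochastically. The upper iterate $\Phi_\alpha^k(\delta_\infty)$ corresponds to shorting at generation $k$, is non-increasing in $k$, and tends a.s.~to $\mathcal{C}\a$. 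A complementary argument, exploiting the fact that the unit terminal conductances become negligible relative to the random subtree conductances at large depth, yields $\Phi_\alpha^k(\delta_1)\to\gamma_\alpha$ as well, which sandwiches $\Phi_\alpha^k(\sigma)\to\gamma_\alpha$ for every $\sigma\in\mathscr{M}$ and forces $\gamma_\alpha$ to be the unique fixed point.

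For the remaining analytic facts: conditional on $S$, the map $u\mapsto (u+(1-u)/S)^{-1}$ is a smooth monotone diffeomorphism of $(0,1)$ onto $(1/S,1)$, giving $\mathcal{C}\a$ the explicit conditional density $y\mapsto s/(y^2(s-1))$ on $[1,s]$; unconditioning against the law of $S$ produces the desired continuous density on $[1,\infty)$. The same change of variable yields $\E[\mathcal{C}\a\mid S]=S\log S/(S-1)$ and $\E[(\mathcal{C}\a)^p\mid S]=(S^p-S)/((S-1)(p-1))$ for $p>1$; in particular $\E[(\mathcal{C}\a)^2]=m_\alpha\E[\mathcal{C}\a]$, and a short bootstrap (first $\E[\log\mathcal{C}\a\mid S]=1+o(1)$ as $s\to\infty$, then $\E[\log S]\leq\E[\log N_\alpha]+m_\alpha\E[\log\mathcal{C}\a]<\infty$, then $\E[\mathcal{C}\a]\leq 2\E[\log S]<\infty$) produces the first two moments for every $\alpha\in(1,2]$. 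For $\alpha=2$ one iterates the same procedure to all integer moments. For $\alpha\in(1,2)$, the lower bound $S\geq N_\alpha$ combined with $P(N_\alpha\geq n)\sim c_\alpha n^{-\alpha}$ forces $\E[S^{p-1}]=\infty$ whenever $p-1\geq\alpha$, hence $\E[(\mathcal{C}\a)^p]=\infty$ for every integer $p\geq 3$. The main obstacle I anticipate is the lower half of the sandwich, $\Phi_\alpha^k(\delta_1)\to\gamma_\alpha$: unlike shorting, constant terminal conductance $1$ has no direct geometric interpretation, and controlling its effect requires a quantitative comparison with the true (random) subtree conductance at generation~$k$; an alternative route to uniqueness, bypassing this obstacle, would leverage the ODE together with the boundary condition $\varphi_\alpha(0)=1$ and suitable regularity at $0$ to force any fixed-point Laplace transform to agree with $\varphi_\alpha$.
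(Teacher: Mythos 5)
Your derivations of the ODE, the density, and the moment identities are correct and take a genuinely different route from the paper: you condition on $S=\mathcal{C}\a_1+\cdots+\mathcal{C}\a_{N_\alpha}$ and integrate out $U$ explicitly, obtaining the closed form $h(\ell,s)=\tfrac{\ell s}{2(s-1)}\int_{\ell/2}^{\ell s/2}v^{-2}e^{-v}\,\mathrm{d}v$ and the exact pointwise identity $2\ell h_{\ell\ell}+\ell h_\ell-h=-e^{-\ell s/2}$, which I have checked. The paper instead works with the tail distribution function $F_\alpha(t)=\gamma_\alpha([t,\infty])$, derives the integral equation \eqref{eq:repart}, differentiates it to get \eqref{eq:edo}, and then multiplies by $g'(t)$ and integrates by parts to get the transfer identity \eqref{eq:star}; taking $g(x)=e^{-x\ell/2}$ recovers the Laplace ODE. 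Your conditional-density and conditional-moment formulas (in particular $\E[(\mathcal{C}\a)^2\mid S]=S$ and the bootstrap via $\E[\log\mathcal{C}\a\mid S]\leq 1$) reach the same conclusions more directly than the paper's route through $F_\alpha$ and repeated use of \eqref{eq:star}, and your observation $S\geq N_\alpha$ correctly delivers divergence of the third moment when $\alpha<2$.

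The genuine gap is in your proof of uniqueness of the fixed point and of the convergence $\Phi_\alpha^k(\sigma)\to\gamma_\alpha$, and you flag it yourself. Rayleigh monotonicity gives $\Phi_\alpha^k(\delta_1)\preceq\Phi_\alpha^k(\sigma)\preceq\Phi_\alpha^k(\delta_\infty)$ and the monotone convergence of the two extreme sequences, but it does not by itself force the two limits to coincide; without that, the sandwich produces two candidate fixed points and no uniqueness. Your fallback — using the singular ODE at $\ell=0$ plus boundary conditions to pin down a unique completely monotone solution — is also nontrivial, since $\ell=0$ is a regular singular point and one would have to rule out a second Frobenius solution and then relate ODE-uniqueness back to distributional uniqueness in $\mathscr{M}$. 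The paper sidesteps both difficulties with a one-shot argument: it shows $\Phi_\alpha$ is a strict contraction on $(\mathscr{M}_1,\mathrm{d}_1)$ with explicit constant $c_\alpha=1+\E\bigl[\tfrac{N_\alpha-1-N_\alpha\log N_\alpha}{(N_\alpha-1)^2}\bigr]<1$, giving existence, uniqueness and convergence in $\mathscr{M}_1$ at once, and then checks $\Phi_\alpha^2(\delta_\infty)\in\mathscr{M}_1$ directly so that monotonicity pushes the conclusion to all of $\mathscr{M}$. You would need to import that contraction (or an equivalent quantitative coupling estimate between the two ends of your sandwich) to close the argument.
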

 
\proof The case $\alpha=2$ has been derived in~\cite[Proposition 6]{CLG13} and is listed above for completeness. We will prove the corresponding assertion for $\alpha\in (1,2)$ by similar methods. 

Firstly, the stochastic partial order $\preceq$ on $ \mathscr{M}$ is defined by saying that $ \sigma \preceq \sigma'$ if and only if there exists a coupling $(X,Y)$ of $\sigma$ and $\sigma'$ such that a.s.~$X \leq Y$. It is clear that for any $\alpha\in [1,2]$, the mapping $\Phi_{\alpha}$ is increasing for the stochastic partial order. 
 
We endow the set $\mathscr{M}_{1}$ of all probability measures on $[1, \infty]$ that have a finite first moment with the $1$-Wasserstein  metric
\begin{eqnarray*} 
\mathrm{d}_{1}( \sigma,\sigma') &\colonequals& \inf\big \{ E\big[|X-Y|\big] \colon  (X,Y)\mbox{ coupling of }(\sigma,\sigma')\big\}.  
\end{eqnarray*} 
The metric space $(\mathscr{M}_{1}, \mathrm{d}_{1})$ is Polish and its topology is finer than the weak topology on $ \mathscr{M}_{1}$. From the easy bound
$$G(u,n,(x_{i})_{i\geq1})\leq x_{1}+x_{2}+\cdots+x_{n}$$
and the fact that $\E N_{\alpha}<\infty$ as $\alpha\neq 1$, we immediately see that $\Phi_{\alpha}$ maps $\mathscr{M}_{1}$ into $ \mathscr{M}_{1}$ when $\alpha>1$. We then observe that the mapping $\Phi_{\alpha}$ is strictly contractant on $(\mathscr{M}_{1}, \mathrm{d}_{1})$. To see this, let $(X_{i},Y_{i})_{i\geq1}$ be independent copies of a coupling between $\sigma,\sigma' \in \mathscr{M}_{1}$ under the probability measure $\mathbb{P}$. As in~(\ref{rde-bis}), let $U$ be uniformly distributed over $[0,1]$ and $N_{\alpha}$ be distributed according to $\theta_{\alpha}$. Assume that $U, N_{\alpha}$ and $(X_{i},Y_{i})_{i\geq1}$ are independent under $\mathbb{P}$. Then the two variables $G(U,N_{\alpha},(X_{i})_{i\geq 1})$ and $G(U,N_{\alpha},(Y_{i})_{i\geq 1})$ give a coupling of $ \Phi_{\alpha}(\sigma)$ and $ \Phi_{\alpha}(\sigma')$.  Using the fact that $X_{i},Y_{i} \geq 1$, we have
\begin{eqnarray*} 
 && \left| G(U,N_{\alpha},(X_{i})_{i\geq 1}) - G(U,N_{\alpha},(Y_{i})_{i\geq 1})  \right|\\
 &=& \Big| \Big( U+ \frac{1-U}{X_1+X_2+\cdots+X_{N_{\alpha}}}\Big)^{-1} -  \Big( U+ \frac{1-U}{Y_1+Y_2+\cdots+Y_{N_{\alpha}}}\Big)^{-1} \Big|\\
 &=& \Big|\frac {(X_1+X_2+\cdots+X_{N_{\alpha}} -Y_1-Y_2-\cdots-Y_{N_{\alpha}})(1-U)}{(U(X_1+X_2+\cdots+X_{N_{\alpha}})+1-U)(U(Y_1+Y_2+\cdots+Y_{N_{\alpha}})+1-U)}\Big|\\
 &\leq &  \big(|X_{1}-Y_{1}|+|X_{2}-Y_{2}|+\cdots+|X_{N_{\alpha}}-Y_{N_{\alpha}}|\big)\, \frac{1-U}{(1+(N_{\alpha}-1)U)^2}.
\end{eqnarray*}
Notice that for any integer $k\geq 2$,
\begin{displaymath}
\mathbb{E}\Big[\frac{k(1-U)}{(1+(k-1)U)^{2}}\Big]=1+\frac{k-1-k\log k}{(k-1)^{2}}.
\end{displaymath}
Taking expected values and minimizing over the choice of the coupling between $\sigma$ and $\sigma'$, we get 
\begin{eqnarray*} 
\mathrm{d}_{1}( \Phi_{\alpha}(\sigma), \Phi_{\alpha}(\sigma'))
&\leq& \mathbb{E}\Big[\frac{N_{\alpha}(1-U)}{(1+(N_{\alpha}-1)U)^{2}}\Big]\mathrm{d}_{1}(\sigma,\sigma')\\
&=& \bigg(1+\mathbb{E}\Big[\frac{N_{\alpha}-1-N_{\alpha}\log N_{\alpha}}{(N_{\alpha}-1)^{2}}\Big]\bigg)\mathrm{d}_{1}(\sigma,\sigma') \;=\; c_{\alpha} \mathrm{d}_{1}(\sigma,\sigma')\,,
\end{eqnarray*}
with $c_{\alpha}<1$. So for $\alpha \in (1,2]$, the mapping $\Phi_{\alpha}$ is contractant on $ \mathscr{M}_{1}$ and by completeness it has a unique fixed point $\tilde\gamma_{\alpha}$ in $ \mathscr{M}_{1}$. Furthermore, for every $ \sigma \in \mathscr{M}_{1}$, we have $ \Phi_{\alpha}^k(\sigma) \to \tilde\gamma_{\alpha}$ for the metric $ \mathrm{d}_{1}$, hence also weakly, as $k \to \infty$.
   
Since we know from \eqref{rde-bis} that $\gamma_{\alpha}$ is also a fixed point of $\Phi_{\alpha}$, the equality $\gamma_{\alpha}=\tilde\gamma_{\alpha}$ will follow if we can verify that $\tilde\gamma_{\alpha}$ is the unique fixed point of $\Phi_{\alpha}$ in $\mathscr{M}$. To this end, it will be enough to show that we have $ \Phi_{\alpha}^k(\sigma) \to \tilde\gamma_{\alpha}$ as $k\to\infty$, for every $\sigma\in \mathscr{M}$. 

For any $\alpha \in [1,2]$, we apply $\Phi_{\alpha}$ to the Dirac measure $\delta_{\infty}$ at $\infty$ to see 
\begin{eqnarray*}
\Phi_{\alpha}(\delta_{\infty}) &= &\mathsf{Law} \big(U^{-1}\big)\,, \\
\Phi_{\alpha}^{2}(\delta_{\infty}) &= &\mathsf{Law} \bigg(\bigg(U+\frac{1-U}{U_{1}^{-1}+U_{2}^{-1}+\cdots+U_{N_{\alpha}}^{-1}}\bigg)^{-1}\bigg)\,,
\end{eqnarray*}
where we introduce a new sequence $(U_{i})_{i\geq 1}$ consisting of i.i.d.~copies of $U$, independent of $N_{\alpha}$ and $U$ under $\mathbb{P}$. Thus the first moment of $\Phi_{\alpha}^{2}(\delta_{\infty})$ is given by
\begin{eqnarray*}
&& \sum\limits_{k\geq 2}\theta_{\alpha}(k)\int_{0}^{1}\mathrm{d}u\int_{0}^{1}\mathrm{d}u_{1}\cdots \int_{0}^{1}\mathrm{d}u_{k} \bigg(u+\frac{1-u}{u_{1}^{-1}+u_{2}^{-1}+\cdots+u_{k}^{-1}}\bigg)^{-1}\\
&=& \sum\limits_{k\geq 2}\theta_{\alpha}(k)\int_{0}^{1}\mathrm{d}u_{1}\cdots \int_{0}^{1}\mathrm{d}u_{k} \frac{1}{1-(u_{1}^{-1}+u_{2}^{-1}+\cdots+u_{k}^{-1})^{-1}}\log \Big(\frac{1}{u_{1}}+\frac{1}{u_{2}}+\cdots+\frac{1}{u_{k}}\Big)\\
&\leq& 2\sum\limits_{k\geq 2}\theta_{\alpha}(k)\int_{0}^{1}\mathrm{d}u_{1}\cdots \int_{0}^{1}\mathrm{d}u_{k} \log \Big(\frac{1}{u_{1}}+\frac{1}{u_{2}}+\cdots+\frac{1}{u_{k}}\Big)\,,
\end{eqnarray*}
in which the integrals can be bounded as follows,
\begin{eqnarray*}
&&\int_{0}^{1}\mathrm{d}u_{1}\cdots \int_{0}^{1}\mathrm{d}u_{k} \log \Big(\frac{1}{u_{1}}+\frac{1}{u_{2}}+\cdots+\frac{1}{u_{k}}\Big) \\
&= &k!\int_{0<u_{1}<u_{2}<\cdots<u_{k}<1}\mathrm{d}u_{1}\mathrm{d}u_{2}\cdots \mathrm{d}u_{k} \log\Big(\frac{1}{u_{1}}+\frac{1}{u_{2}}+\cdots+\frac{1}{u_{k}}\Big) \\
&=& k!\int_{0<u_{2}<u_{3}<\cdots<u_{k}<1}\mathrm{d}u_{2}\mathrm{d}u_{3}\cdots \mathrm{d}u_{k} \bigg[u_{2}\log\Big(\frac{2}{u_{2}}+\frac{1}{u_{3}}+\cdots+\frac{1}{u_{k}}\Big)+\frac{\log\Big(2+\frac{u_{2}}{u_{3}}+\cdots+\frac{u_{2}}{u_{k}}\Big)}{u_{2}^{-1}+u_{3}^{-1}+\cdots+u_{k}^{-1}}\bigg]\\
&\leq& k!\int_{0<u_{2}<u_{3}<\cdots<u_{k}<1}\mathrm{d}u_{2}\mathrm{d}u_{3}\cdots \mathrm{d}u_{k} \Big[u_{2}\log \frac{k}{u_{2}}+\frac{\log k}{k-1}\Big]\\
&=& \log k+\frac{1}{2}+\cdots+\frac{1}{k}+\frac{k\log k}{k-1}\;\leq\; (2+\frac{k}{k-1})\log k\,.
\end{eqnarray*}
Using Stirling's formula, we know that $\theta_{\alpha}(k)=O(k^{-(1+\alpha)})$ as $k\to +\infty$. As
\begin{displaymath}
\sum\limits_{k\geq2} (2+\frac{k}{k-1})\frac{\log k }{k^{1+\alpha}}<+\infty
\end{displaymath} 
for all $\alpha\in [1,2]$, we get $\Phi_{\alpha}^2(\delta_{\infty})\in\mathscr{M}_1$. By monotonicity, we have also $\Phi_{\alpha}^2(\sigma)\in\mathscr{M}_1$ for every $\sigma\in\mathscr{M}$, and from the preceding results we get $\Phi_{\alpha}^k(\sigma) \to \tilde\gamma_{\alpha}$ for every $\sigma\in \mathscr{M}$. This implies that $\gamma_{\alpha}=\tilde\gamma_{\alpha}$ is the unique fixed point of $\Phi_{\alpha}$ in $\mathscr{M}$.

For every $t\in \R$ we set $F_{\alpha}(t)=\gamma_{\alpha}([t,\infty])$. For every integer $k\geq 2$, we write $F^{(k)}_{\alpha}(t) = \mathbb{P}( \cc\a_{1}+\cc\a_{2}+\cdots+\cc\a_{k} \geq t)$, where $(\cc\a_{k})_{k\geq1}$ are independent and identically distributed according to $\gamma_{\alpha}$. Then we have, for every $t>1$,
\begin{eqnarray} 
\label{eq:repart} 
F_{\alpha}(t) &=& \mathbb{P}\bigg ( U + \frac{1-U}{ \cc\a_{1} +\cc\a_{2}+\cdots+\cc\a_{N_{\alpha}}} \leq t^{-1} \bigg) \nonumber \\ 
&=& \mathbb{P}\left( U <  t^{-1} \mbox{ and } \frac{t-Ut}{1-Ut} \leq \cc\a_{1} + \cc\a_{2}+\cdots+\cc\a_{N_{\alpha}}  \right)\nonumber \\ 
&=& \mathbb{E}\bigg[\int_{0}^{1/t} \mathrm{d}u \,F^{(N_{\alpha})}_{\alpha}\left ( \frac{t-ut}{1-ut}\right)\bigg]\nonumber \\
&= & \frac{t-1}{t} \int_{t}^\infty  \frac{\mathrm{d}x}{(x-1)^2} \mathbb{E} \Big[F_{\alpha}^{(N_{\alpha})}(x)\Big].  
\end{eqnarray}
By definition, we have $F^{(k)}_{\alpha}(t)=1$ for every $t\in[1,2]$ and $k\geq 2$. It follows from~\eqref{eq:repart} that
\begin{equation}
\label{eq:valueon[1,2]}
F_{\alpha}(t) = \frac{D\a}{t} + 1-D\a,\qquad \forall t\in[1,2],
\end{equation}
where 
$$D\a= 2-\int_2^{\infty} \frac{\mathrm{d}x}{(x-1)^2} \mathbb{E}\Big[F_{\alpha}^{(N_{\alpha})}(x)\Big] \in [1,2].$$
We observe that the right-hand side of \eqref{eq:repart} is a continuous function of $t\in(1,\infty)$, so that $F_{\alpha}$ is continuous on $[1,\infty)$ (the right-continuity at $1$ is obvious from \eqref{eq:valueon[1,2]}). Thus $\gamma_{\alpha}$ has no atom and it follows that all functions $F^{(k)}_{\alpha}, k\geq 2$ are continuous on $[1,\infty)$. By dominated convergence the function $x\mapsto \E[F_{\alpha}^{(N_{\alpha})}(x)]$ is also continuous on $[1,\infty)$. Using \eqref{eq:repart} again we obtain that $F_{\alpha}$ is continuously differentiable on $[1,\infty)$ and consequently $\gamma_{\alpha}$ has a continuous density $f_{\alpha}=-F'_{\alpha}$ with respect to Lebesgue measure on $[1,\infty)$. 

Let us finally derive the differential equation \eqref{eq:laplace}. To this end, we first differentiate \eqref{eq:repart} with respect to $t$ to get that the linear differential equation   
\begin{equation}
\label{eq:edo}
t(t-1) F'_{\alpha}(t) - F_{\alpha}(t)= -\mathbb{E}\big[F_{\alpha}^{(N_{\alpha})}(t)\big]
\end{equation}
holds for $t\in[1,\infty)$. Then let $g\colon [1,\infty) \to \mathbb{R}_{+}$ be a monotone continuously differentiable function. From the definition of $F_{\alpha}$ and Fubini's theorem, we have
$$ \int_{1}^\infty \mathrm{d}t\, g'(t) F_{\alpha}(t) = \mathbb{E}\big[ g( \mathcal{C}\a)\big] - g(1)$$
and similarly
$$\int_{1}^\infty \mathrm{d}t\, g'(t) \mathbb{E}\big[F_{\alpha}^{(N_{\alpha})}(t)\big] = \mathbb{E}\big[ g( \mathcal{C}\a_{1}+ \mathcal{C}\a_{2}+\cdots+\mathcal{C}\a_{N_{\alpha}})\big] - g(1).$$
We then multiply both sides of \eqref{eq:edo} by $g'(t)$ and integrate for $t$ running from $1$ to $\infty$ to get   
\begin{equation} 
\label{eq:star} 
\mathbb{E}\big[ \mathcal{C}\a_{1}( \mathcal{C}\a_{1}-1) g'( \mathcal{C}\a_{1})\big] + \mathbb{E} \big[ g( \mathcal{C}\a_{1})\big] =  \E\big[g( \mathcal{C}\a_{1}+ \mathcal{C}\a_{2}+\cdots+\mathcal{C}\a_{N_{\alpha}})\big]. 
\end{equation} 
When $g(x)=\exp(- x\ell/2 )$ for $\ell >0$, we readily obtain \eqref{eq:laplace} by using the generating function of $N_{\alpha}$ given in (\ref{eq:gene-fct}). Finally, taking $g(x)=x$ in~(\ref{eq:star}), we get
\begin{displaymath}
\E\big[(\mathcal{C}\a)^{2}\big]=\E\big[N_{\alpha}\big]\E\big[\mathcal{C}\a\big]=\frac{\alpha}{\alpha-1}\E\big[\mathcal{C}\a\big]\,.
\end{displaymath}
Nevertheless, by taking $g(x)=x^{2}$ in~(\ref{eq:star}), we see that the third moment of $\mathcal{C}\a$ is infinite since $\E\big[(N_{\alpha})^{2}\big]=\infty$.
\endproof
 
The arguments of the preceding proof also yield the following lemma in the case $\alpha=1$.
\begin{lemma}
\label{lem:conductance1}
The conductance $\mathcal{C}^{(1)}$ of the tree $\Delta^{(1)}$ satisfies the bound
\begin{equation}
\label{eq:C1mean}
\E\big[\mathcal{C}^{(1)}\big] \leq 2\sum\limits_{k\geq2} (2+\frac{k}{k-1})\frac{\log k }{k(k-1)}<+\infty.
\end{equation} 
Additionally, the Laplace transform $\varphi_{1}$ of the law of $\mathcal{C}^{(1)}$ solves the differential equation
\begin{equation*} 
2\ell\,\varphi''(\ell) + \ell \varphi'(\ell) + (1-\varphi(\ell))\log(1-\varphi(\ell))=0. 
\end{equation*}
\end{lemma}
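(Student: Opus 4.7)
The plan is to mirror the arguments used for $\alpha\in(1,2)$ in the proof of Proposition~\ref{prop:unique}, being careful about the fact that $\theta_1$ has infinite mean. The main obstacle is that the contraction-on-$\mathscr{M}_1$ argument used when $\alpha>1$ to establish uniqueness and finiteness of the first moment is unavailable here, since $\E[N_1]=\infty$. However, Lemma~\ref{lem:conductance1} does not ask for uniqueness of the fixed point of $\Phi_1$ — it only requires an explicit upper bound on $\E[\mathcal{C}^{(1)}]$ and an ODE for $\varphi_1$ — so this obstruction can be bypassed by a direct monotone-comparison argument, after which the ODE derivation goes through essentially verbatim.

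For the first assertion, I would first observe that the branching structure of $\Delta^{(1)}$ at its first branching point yields, exactly as for $\alpha\in(1,2]$, that the law $\gamma_1$ of $\mathcal{C}^{(1)}$ is a fixed point of $\Phi_1$; in particular $\gamma_1=\Phi_1^2(\gamma_1)$. Since every element of $\mathscr{M}$ is stochastically dominated by $\delta_\infty$ and $\Phi_1$ is monotone for the stochastic order $\preceq$, this gives $\gamma_1\preceq \Phi_1^2(\delta_\infty)$, hence $\E[\mathcal{C}^{(1)}]$ is bounded above by the mean of $\Phi_1^2(\delta_\infty)$. The explicit computation carried out in the proof of Proposition~\ref{prop:unique} bounds the mean of $\Phi_\alpha^2(\delta_\infty)$ by $2\sum_{k\geq 2}\theta_\alpha(k)(2+\frac{k}{k-1})\log k$; this calculation depends on $\alpha$ only through the weights $\theta_\alpha(k)$ and is valid for every $\alpha\in[1,2]$. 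Substituting $\theta_1(k)=1/(k(k-1))$ gives exactly (\ref{eq:C1mean}), and the series converges since its general term is $O(k^{-2}\log k)$.

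For the differential equation, I would follow the same route as in the derivation of (\ref{eq:laplace}). The recursive identity (\ref{eq:repart}) holds verbatim for $\alpha=1$ because it uses only the distributional fixed-point property. The continuity/differentiability of $F_\alpha$ and the existence of a continuous density $f_\alpha$ also carry over: they rely on dominated convergence applied to $\sum_k\theta_\alpha(k)F_\alpha^{(k)}(x)$, which is fine since $\sum_k\theta_1(k)=1$ and $F_\alpha^{(k)}\leq 1$. This yields (\ref{eq:edo}) for $\alpha=1$ and then, after multiplying by $g'(t)$ and integrating by parts, the identity (\ref{eq:star}). Taking $g(x)=e^{-\ell x/2}$ raises no integrability issue because $x^2e^{-\ell x/2}$ is bounded on $[1,\infty)$, so (\ref{eq:star}) becomes $-2\ell\varphi_1''(\ell)-\ell\varphi_1'(\ell)+\varphi_1(\ell)=\E\!\left[\varphi_1(\ell)^{N_1}\right]$. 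Inserting the generating function (\ref{eq:gene-fct-1}) of $\theta_1$ on the right-hand side gives $\E[\varphi_1(\ell)^{N_1}]=\varphi_1(\ell)+(1-\varphi_1(\ell))\log(1-\varphi_1(\ell))$, and cancelling $\varphi_1(\ell)$ from both sides produces precisely the stated differential equation.
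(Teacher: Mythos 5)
Your proof is correct and follows essentially the same route as the paper: the monotone comparison $\gamma_1 = \Phi_1^2(\gamma_1) \preceq \Phi_1^2(\delta_\infty)$ for the first-moment bound, and the analogue of~(\ref{eq:star}) with $g(x)=e^{-\ell x/2}$ combined with the generating function~(\ref{eq:gene-fct-1}) for the differential equation. The extra justifications you provide — in particular, that the derivation of~(\ref{eq:repart})--(\ref{eq:star}) never uses $\E[N_\alpha]<\infty$ and that the first-moment computation of $\Phi_\alpha^2(\delta_\infty)$ depends on $\alpha$ only through the weights $\theta_\alpha(k)$ — correctly spell out what the paper's very terse proof leaves implicit.
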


\begin{proof}
The law of $\mathcal{C}^{(1)}$ is a fixed point of the mapping $\Phi_{1}$ defined via~(\ref{rde-bis}) with $\alpha=1$. By the same monotonicity argument that we used above, it follows that the first moment of $\mathcal{C}^{(1)}$ is bounded above by the first moment of $\Phi_{1}^{2}(\delta_{\infty})$, and the calculation of this first moment in the previous proof leads to the right-hand side of (\ref{eq:C1mean}). 

As an analogue to~(\ref{eq:star}), we have
\begin{equation*} 
\mathbb{E}\big[ \mathcal{C}^{(1)}_{1}( \mathcal{C}^{(1)}_{1}-1) g'( \mathcal{C}^{(1)}_{1})\big] + \mathbb{E} \big[ g( \mathcal{C}^{(1)}_{1})\big] =  \E\big[g( \mathcal{C}^{(1)}_{1}+ \mathcal{C}^{(1)}_{2}+\cdots+\mathcal{C}^{(1)}_{N_{1}})\big].
\end{equation*} 
By taking $g(x)=\exp(- x\ell/2)$ and using (\ref{eq:gene-fct-1}), one can then derive the differential equation satisfied by $\varphi_{1}$.
\end{proof}

\subsection{The reduced stable trees are nested}\label{sec:coupling}
In this short subsection, we introduce a coupling argument to explain how \cref{thm:bdd-dimension} follows from the identity (\ref{eq:beta-value}) in~\cref{prop:dim-formula}. 

Recall the definition of the $\alpha$-offspring distribution $\theta_{\alpha}$. From the obvious fact 
\begin{displaymath}
1-\sum\limits_{i=2}^{k-1}\frac{\alpha}{i-\alpha}<0, \qquad \forall \alpha \in(1,2), k\geq 3,
\end{displaymath} 
one deduces that for all $k\geq 3$, 
\begin{displaymath}
\frac{\mathrm{d}}{\mathrm{d}\alpha}\theta_{\alpha}(k)< 0, \qquad \forall \alpha \in(1,2).
\end{displaymath}
This implies that for every $k\geq 3$, $\theta_{\alpha}([2,k])$ is a strictly increasing function of $\alpha\in(1,2)$. Using the inverse transform sampling, we can construct on a common probability space a sequence of random variables $(N_{\alpha},\alpha \in[1,2])$ such that a.s.
\begin{displaymath}
N_{\alpha_{2}}\geq N_{\alpha_{1}} \quad \mbox{for all } 1\leq\alpha_{2}\leq\alpha_{1}\leq 2.
\end{displaymath}
Then following the same procedure explained in~\cref{sec:treedelta}, we can construct simultaneously all reduced stable trees as a nested family. More precisely, there exists a family of compact $\mathbb{R}$-trees $(\bar \Delta\a,\alpha\in[1,2])$ such that 
\begin{eqnarray*}
\bar\Delta^{(\alpha)} &\overset{(\mathrm{d})}{=}& \Delta\a \quad \mbox{for all } 1\leq\alpha\leq2\,;\\
\bar\Delta^{(\alpha_{1})}&\subseteq& \bar\Delta^{(\alpha_{2})} \quad \mbox{for all } 1\leq\alpha_{2}\leq\alpha_{1}\leq2\,.
\end{eqnarray*}
Consequently, the family of conductances $(\bar{\mathcal{C}}\a,\alpha\in[1,2])$ associated with $(\bar \Delta\a,\alpha\in[1,2])$ is decreasing with respect to $\alpha$. In particular, the mean $\E[\mathcal{C}\a]$ is decreasing with respect to $\alpha$, and it follows from~(\ref{eq:C1mean}) that $(\E[\mathcal{C}\a],\alpha\in[1,2])$ is uniformly bounded by the constant
\begin{equation*}
C_{0}\colonequals 2\sum\limits_{k\geq2} (2+\frac{k}{k-1})\frac{\log k }{k(k-1)}<+\infty.
\end{equation*}

\smallskip
\noindent{\bf Proof of~\cref{thm:bdd-dimension}.} For any $\alpha\in (1,2]$, $\gamma_{\alpha}$ is a probability measure on $[1,\infty)$ and
\begin{eqnarray*}
\iint \gamma_{\alpha}(\mathrm{d}s)\gamma_{\alpha}(\mathrm{d}t)\frac{st}{s+t-1} &\geq & \iint \gamma_{\alpha}(\mathrm{d}s)\gamma_{\alpha}(\mathrm{d}t)\frac{st}{s+t}\\
&\geq & \iint \gamma_{\alpha}(\mathrm{d}s)\gamma_{\alpha}(\mathrm{d}t)\frac{st}{2(s\vee t)}\\
&=& \frac{1}{2}\iint \gamma_{\alpha}(\mathrm{d}s)\gamma_{\alpha}(\mathrm{d}t) (s\wedge t) \;\geq\; \frac{1}{2}\,.
\end{eqnarray*}
So we derive from (\ref{eq:beta-value}) that 
\begin{displaymath}
\beta_{\alpha}\leq \frac{1}{2}\Big(2\big(\E\big[\mathcal{C}\a\big]\big)^{2}-1\Big)\leq \frac{1}{2}\big(2\, C_{0}^{2}-1\big)<\infty.  \tag*{$\square$}
\end{displaymath}

\subsection{Proof of Theorem~\ref{thm:dim-stable}}
\label{sec:stationary+ergodic}

The proof of~\cref{thm:dim-stable} given below will follow the approach sketched in~\cite[Section 5.1]{CLG13}. We will first establish the flow property of harmonic measure (Lemma~\ref{flow-property}), and then find an explicit invariant measure for the environment seen by Brownian motion on the CTGW tree $\Gamma\a$ at the last visit of a vertex of the $n$-th generation (Proposition~\ref{invariantbis}). After that, we will rely on arguments of ergodic theory to complete the proof of~\cref{thm:dim-stable} and that of~\cref{prop:dim-formula}. 

\textbf{Throughout this subsection, we fix the stable index $\alpha\in (1,2]$ once and for all}. For notational ease, we will omit the superscripts and subscripts concerning~$\alpha$ in all the proofs involved. Recall that $ \mathbb{P}$ stands for the probability measure under which the CTGW tree $\Gamma\a$ is defined, whereas Brownian motion with drift $1/2$ on the CTGW tree is defined under the probability measure~$P$.

\subsubsection{The flow property of harmonic measure}
\label{sec:flow}
We fix an infinite continuous tree $\t\in \T$, and write as before $\t_{(1)},\t_{(2)},\ldots,\t_{(k_{\varnothing})}$ for the subtrees of $\t$ at the first branching point. Here we slightly abuse notation by writing $W=(W(t))_{t\geq 0}$ for Brownian motion with drift $1/2$ on $\t$ started from the root. As in Section~\ref{sec:ctgwtree}, $W_\infty$ stands for the exit ray of $W$, and the distribution of $W_\infty$ on the boundary of $\t$ is the harmonic measure of $\t$, denoted as $\nu_{\t}$. Let $K$ be the index such that $W_{\infty}$ ``belongs to'' $\t_{(K)}$ and we write $W'_{\infty}$ for the ray of $\t_{(K)}$ obtained by shifting $W_{\infty}$ at the first branching point of $\t$.

\begin{lemma}
\label{flow-property}
Let $j\in\{1,2,\ldots,k_{\varnothing}\}$. Conditionally on $\{K=j\}$, the law of $W_\infty'$ is the harmonic measure of $\t_{(j)}$. 
\end{lemma}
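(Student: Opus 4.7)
The plan is to decompose the trajectory of $W$ into the final escape excursion from the first branching point $b:=(\varnothing,z_\varnothing)$ of $\t$, and then to apply a last-exit / It\^o-excursion argument to identify the law of $W'_\infty$ on the event $\{K=j\}$. First I would use transience of Brownian motion with drift $1/2$ to observe that $\tau:=\sup\{t\geq 0\colon W(t)=b\}$ is $P$-a.s.~finite, and that on $\{K=j\}$ the tail trajectory $(W(t))_{t>\tau}$ lies entirely in $\{b\}\cup\t_{(j)}$, so that $W'_\infty$ is exactly the exit ray of $(W(\tau+s))_{s\geq 0}$ read off in $\t_{(j)}$ (with the root of $\t_{(j)}$ identified with $b$).

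Next I would invoke It\^o excursion theory for $W$ at $b$: under the local time at $b$, the excursions of $W$ away from $b$ form a Poisson point process; at each visit to $b$ the excursion enters one of the $k_\varnothing+1$ adjacent segments (one going down to the root of $\t$ and $k_\varnothing$ going into the subtrees $\t_{(1)},\ldots,\t_{(k_\varnothing)}$) uniformly at random and independently of the past. Excursions into the root segment return to $b$ a.s., while an excursion into $\t_{(i)}$ escapes with some positive probability $p_i=p_i(\t_{(i)})$, and conditionally on escaping, is distributed as Brownian motion on $\t_{(i)}$ started from the root of $\t_{(i)}$ and conditioned never to return to that root. Exactly one excursion escapes, and by construction its subtree index is $K$; thus, conditionally on $\{K=j\}$, the process $(W(\tau+s))_{s\geq 0}$ viewed in $\t_{(j)}$ has the law of Brownian motion on $\t_{(j)}$ from the root conditioned never to return.

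It remains to show that the exit ray of this conditioned process has the same distribution as the exit ray of the unconditioned one, which by definition is $\nu_{\t_{(j)}}$. This I would establish by the same last-exit argument applied now to an unconditioned Brownian motion $X$ on $\t_{(j)}$ started from its root: the shift of $X$ at its own (a.s.~finite) last visit time to the root has the law of $X$ conditioned never to return to the root, but the exit ray is invariant under this shift since it depends only on the values of $X$ at arbitrarily large times. Combining the two steps yields that, conditionally on $\{K=j\}$, $W'_\infty$ has law $\nu_{\t_{(j)}}$, which is the assertion of the lemma.

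The main obstacle is the rigorous invocation of It\^o excursion theory at the branching point $b$, i.e.~the identification of the excursion measure of $W$ at $b$ as a weighted superposition of excursion measures of linear Brownian motion with drift $1/2$ on the $k_\varnothing+1$ adjacent segments, with the ``uniform branching'' rule in the definition of $W$ translating into the independence-and-uniform-choice property used above. This description is standard in the one-dimensional case and is carried out in the binary setting $k_\varnothing=2$ in~\cite{CLG13}; I expect the extension to general $k_\varnothing\geq 2$ to be routine, but it is the only step that requires careful bookkeeping.
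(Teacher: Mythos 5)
Your argument is correct. The paper omits the proof, deferring to~\cite[Lemma 7]{CLG13}, and your excursion-theoretic last-exit decomposition is precisely the natural route there; the one step you single out --- identifying the It\^o excursion measure of $W$ at the first branching point as a uniform mixture over the $k_{\varnothing}+1$ adjacent directions, so that the post-last-visit trajectory is the escape excursion into $\mathcal{T}_{(K)}$ and, by a second last-exit argument, has the same exit ray law as unconditioned Brownian motion on $\mathcal{T}_{(j)}$ --- is routine to extend from the binary case $k_{\varnothing}=2$, as you say.
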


The proof is similar to that of \cite[Lemma 7]{CLG13} and is therefore omitted. 

\subsubsection{The invariant measure and ergodicity}
\label{sec:ergodic}

We introduce the set 
\begin{displaymath}
\T^* \subseteq \T \times \N^{\N}
\end{displaymath}
of all pairs consisting of a tree $\t\in \T$ and a distinguished geodesic ray $\mathbf{v}$ in $\t$. Given a distinguished geodesic ray $\mathbf{v}=(v_1,v_2,\ldots)$ in $\t$, we let $S(\t,\mathbf{v})$ be obtained by shifting $(\t,\mathbf{v})$ at the first branching point of $\t$, that is 
$$ S(\t,\mathbf{v})= (\t_{(v_1)}, \wt{\mathbf{v}}),$$
where $\wt{\mathbf{v}}=(v_2,v_3,\ldots)$ and $\t_{(v_1)}$ is the subtree of $\t$ rooted at the first branching point that is chosen by $\mathbf{v}$. 

Under the probability measure $\P \otimes P$, we can view $(\Gamma\a,W_\infty)$ as a random variable with values in $\T^*$. We write $\Theta^*_{\alpha}(\mathrm{d}\t \mathrm{d} \mathbf{v})$ for the distribution of $(\Gamma\a,W_\infty)$. The next proposition gives an invariant measure absolutely continuous with respect to $\Theta^*_{\alpha}$ under the shift $S$.

\begin{proposition}
\label{invariantbis}
For every $r\geq 1$, set
$$\kappa_{\alpha}(r) \colonequals  \sum\limits_{k=2}^{\infty} k\theta_{\alpha}(k)\!\int\!\gamma_{\alpha}(\mathrm{d}t_{1})\!\int\! \gamma_{\alpha}(\mathrm{d}t_{2})\cdots \!\int\!\gamma_{\alpha}(\mathrm{d}t_{k})\,\frac{r t_{1}}{r+t_{1}+t_{2}+\cdots+t_{k}-1}.$$
The finite measure $\kappa_{\alpha}(\cc(\t))\Theta_{\alpha}^*(\mathrm{d}\t\mathrm{d}\mathbf{v})$ is invariant under $S$.
\end{proposition}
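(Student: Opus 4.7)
The plan is to verify the invariance $S_*(\kappa_\alpha(\cc(\t))\,\Theta^*_\alpha)=\kappa_\alpha(\cc(\t))\,\Theta^*_\alpha$ directly, by combining the flow property (Lemma~\ref{flow-property}), the branching property of the CTGW tree, and a change of variable that reduces everything to an integral identity for $\kappa_\alpha$.

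Concretely, I would fix a bounded measurable $F\colon\T^*\to\R$ and expand
$$\int F\bigl(S(\t,\mathbf{v})\bigr)\,\kappa_\alpha(\cc(\t))\,\Theta^*_\alpha(\mathrm{d}\t\,\mathrm{d}\mathbf{v}).$$
Under $\Theta^*_\alpha$, the ray $\mathbf{v}$ is the exit ray of Brownian motion with drift $1/2$ on $\Gamma\a$, so the standard electrical network argument on a tree with reflecting root gives the conditional probability that $v_1=j$ given $\t$ as $\cc(\t_{(j)})/\sum_i\cc(\t_{(i)})$. Combining this with Lemma~\ref{flow-property} (which identifies the conditional law of the shifted ray $\tilde{\mathbf{v}}$ given $\{v_1=j\}$ with $\nu_{\t_{(j)}}$) and then using the CTGW branching property together with symmetry in $j$, I would rewrite the integral above as
$$\sum_{k\geq 2}k\,\theta_\alpha(k)\,\mathbb{E}\!\left[G(\t_1)\cdot\frac{\cc(\t_1)}{\sum_{i=1}^{k}\cc(\t_i)}\cdot\kappa_\alpha(\cc(\Gamma))\right],$$
where $\t_1,\ldots,\t_k$ are i.i.d.\ with law $\Theta_\alpha$, $V$ is an independent exponential$(1)$ variable, $\Gamma$ is the tree obtained by joining them to an initial edge of length $V$, and $G(\mathfrak{s}):=\int F(\mathfrak{s},\mathbf{v}')\,\nu_{\mathfrak{s}}(\mathrm{d}\mathbf{v}')$.

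Conditioning further on $\t_1$, this equals $\mathbb{E}[G(\t_1)\kappa_\alpha(\cc(\t_1))]$ if and only if, for every $r\geq 1$,
$$\kappa_\alpha(r)=\sum_{k\geq 2}k\,\theta_\alpha(k)\int_0^1\!\mathrm{d}U\int\gamma_\alpha^{\otimes(k-1)}(\mathrm{d}C_2\cdots\mathrm{d}C_k)\,\frac{r}{S}\,\kappa_\alpha\!\left(\frac{S}{1+U(S-1)}\right),$$
with $S:=r+C_2+\cdots+C_k$, and with the pointwise identity $\cc(\Gamma)=\bigl(U+(1-U)/\sum_{j=1}^{k}\cc(\t_j)\bigr)^{-1}$, $U:=1-e^{-V}$, read off from the RDE~\eqref{eq:rde}. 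The change of variable $s=S/(1+U(S-1))$ (under which $s$ ranges over $[1,S]$ and $\mathrm{d}U=-S/((S-1)s^2)\,\mathrm{d}s$) then recasts the identity in the cleaner form
$$\kappa_\alpha(r)=r\sum_{k\geq 2}k\,\theta_\alpha(k)\,\mathbb{E}_{C_2,\ldots,C_k}\!\left[\frac{1}{S-1}\int_1^S\frac{\kappa_\alpha(s)}{s^2}\,\mathrm{d}s\right],$$
so that one is left to check that the explicit formula from the statement solves it. Substituting that formula into the right-hand side, evaluating the inner $s$-integral by partial fractions (which produces a logarithm in $S$ and in $t_1+\cdots+t_m$), and reorganising the resulting double sum by Fubini should yield $\kappa_\alpha(r)$: the factor $t_1$ and the $-1$ in the denominator of the defining formula for $\kappa_\alpha$ are exactly the residues of the $s$-substitution Jacobian and of the $(1-U)/\sum_j\cc(\t_j)$ term of the RDE.

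The main obstacle is this final algebraic verification. The double sums over the offspring numbers $k$ and over the inner index $m$ hidden in $\kappa_\alpha$, combined with the multiple integrals against $\gamma_\alpha$, do not simplify elementarily, and the Fubini interchanges rely on the tail bound $\theta_\alpha(k)=O(k^{-1-\alpha})$ combined with the finiteness of $\mathbb{E}[\mathcal{C}\a]$ from Proposition~\ref{prop:unique}. For $\alpha\in(1,2)$, where only the first two moments of $\gamma_\alpha$ are finite, integrability near infinity has to be checked with extra care.
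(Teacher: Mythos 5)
Your reduction up to the integral identity is exactly the paper's: the flow property, the branching property of the CTGW tree, and the symmetry-over-$j$ argument bring everything down to verifying, for every $r\geq 1$,
\begin{equation*}
\kappa_{\alpha}(r)\ =\ \sum_{k\geq 2} k\theta_{\alpha}(k)\int_0^1\!\mathrm{d}u\!\int\!\gamma_{\alpha}^{\otimes(k-1)}(\mathrm{d}c_2\cdots \mathrm{d}c_k)\,\frac{r}{S}\,\kappa_{\alpha}\!\Big(\Big(u+\frac{1-u}{S}\Big)^{-1}\Big),\quad S:=r+c_2+\cdots+c_k,
\end{equation*}
which is the paper's equation (\ref{eq:invarbistech2}). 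So far the two proofs coincide, up to whether one writes $(u+\tfrac{1-u}{S})^{-1}$ or $S/(1+u(S-1))$.

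The gap is in the last step, and you acknowledge it yourself. Changing variables $s=S/(1+u(S-1))$ and then substituting the explicit formula for $\kappa_{\alpha}$ forces you to evaluate $\int_1^S \kappa_{\alpha}(s)\,s^{-2}\,\mathrm{d}s$, and partial fractions turn the resulting integrand into $\tfrac{\tilde c_1}{\tilde T-1}\log\tfrac{S\tilde T}{S+\tilde T-1}$ (where $\tilde T=\tilde c_1+\cdots+\tilde c_m$). You are then facing a double sum over $k$ and $m$ of logarithmic terms that must somehow telescope back to the log-free expression $\sum_m m\theta_{\alpha}(m)\,\E[r\tilde c_1/(r+\tilde T-1)]$, and — as you note — this ``does not simplify elementarily.'' The change of variable is not wrong, but it is counterproductive: integrating out $u$ introduces the logarithm and hides the structure that makes the identity close.

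What closes it is to keep the $u$-integral unevaluated. Plug $\kappa_{\alpha}(y)=\E[\,y\tilde N\tilde{\cc}_1/(y+\tilde{\cc}_1+\cdots+\tilde{\cc}_{\tilde N}-1)\,]$ into the right-hand side and simplify the integrand algebraically, using the symmetry replacement $\E[\tilde N\tilde{\cc}_1\cdot(\text{symmetric in }\tilde{\cc}_1,\dots,\tilde{\cc}_{\tilde N})]=\E[(\tilde{\cc}_1+\cdots+\tilde{\cc}_{\tilde N})\cdot(\cdots)]$ and the elementary rearrangement $S+(\tilde B-1)(uS+1-u)=\tilde B\,(u(S-1)+1)+(S-1)(1-u)$ with $\tilde B:=\tilde{\cc}_1+\cdots+\tilde{\cc}_{\tilde N}$. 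After this the $u$-dependence of the integrand collapses entirely into the single expression $\big(u+(1-u)/\tilde B\big)^{-1}$, and by the recursive distributional equation \eqref{eq:rde} this is a fresh $\gamma_{\alpha}$-distributed variable independent of $(\cc_2,\ldots,\cc_k)$ and $N$. Substituting it back gives $\E[rN\,\tilde{\cc}/(r+\tilde{\cc}+\cc_2+\cdots+\cc_N-1)]=\kappa_{\alpha}(r)$. So the idea you hint at — that the RDE must be responsible for the collapse — is the right one; the mistake was integrating out $u$ before the RDE structure was exposed, which is precisely what prevents you from seeing it.
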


\noindent \textbf{Remark.} The preceding formula for $\kappa_{\alpha}$ is suggested by the analogous formula in~\cite[Proposition 25]{CLG13} for $\alpha=2$.

\proof 
First notice that the function $\kappa$ is bounded, since for every $r\geq 1$, 
\begin{displaymath}
\kappa(r) \leq  \sum\limits_{k=2}^{\infty} k\theta(k) \int t_1\gamma(\mathrm{d}t_1)  <\infty.
\end{displaymath} 

Let us fix $\t\in\T$, then for any $1\leq i\leq k_{\varnothing}$ and any bounded measurable function $g$ on $\N^{\N}$, the flow property of harmonic measure gives that
$$\int \nu_\t(\mathrm{d}\mathbf{v})\,\mathbf{1}_{\{v_1=i\}}\,g(\wt{\mathbf{v}}) = \frac{\cc(\t_{(i)})}{\cc(\t_{(1)}) +\cdots+ \cc(\t_{(k_{\varnothing})})}\int \nu_{\t_{(i)}}(\mathrm{d}\mathbf{u})\,g(\mathbf{u}).$$
Recall that $\Theta^*(\mathrm{d}\t\,\mathrm{d}\mathbf{v})= \Theta(\mathrm{d}\t)\nu_\t(\mathrm{d}\mathbf{v})$ by construction. Let $F$ be a bounded measurable function on $\T^*$. Using the preceding display, we have
\begin{align}
\label{eq:S}
&\int F\circ S(\t,\mathbf{v})\,\kappa(\cc(\t))\,\Theta^*(\mathrm{d}\t\,\mathrm{d}\mathbf{v})   \\ 
&\quad= \sum\limits_{k=2}^{\infty}\theta (k) \sum\limits_{i=1}^{k}\int\! F(\t_{(i)}, \mathbf{u})\kappa(\cc(\t)) \frac{\cc(\t_{(i)})}{\cc(\t_{(1)}) +\cdots+ \cc(\t_{(k)})}\, \Theta(\mathrm{d}\t\!\mid\! k_{\varnothing}=k)\,\nu_{\t_{(i)}}(\mathrm{d}\mathbf{u}).\notag
\end{align}
Observe that under $\Theta(\mathrm{d}\t\,|\, k_{\varnothing}=k)$, the subtrees $\t_{(1)},\t_{(2)},\ldots, \t_{(k)}$ are independent and distributed according to $\Theta$, and furthermore,
$$\cc(\t)= \Big( U + \frac{1-U}{\cc(\t_{(1)}) +\cdots+ \cc(\t_{(k)})}\Big)^{-1},$$
where $U$ is uniformly distributed over $[0,1]$ and independent of $(\t_{(1)},\t_{(2)},\ldots, \t_{(k)})$. Using these observations, together with a simple symmetry argument, we get that the integral~(\ref{eq:S}) is given by
\begin{align*}
&\sum\limits_{k=2}^{\infty} k\theta(k)\int_0^1 \mathrm{d}x \!\int\!  \Theta(\mathrm{d}\t_{1})\cdots \!\int\! \Theta(\mathrm{d}\t_{k})\!\int\!\nu_{\t_{1}}(\mathrm{d}\mathbf{u})\,F(\t_{1},\mathbf{u}) \hspace{3cm}\\
&\qquad \qquad \times \frac{\cc(\t_{1})}{\cc(\t_{1}) +\cdots + \cc(\t_{k})} \,\kappa\Big(\Big( x + \frac{1-x}{\cc(\t_{1}) +\cdots + \cc(\t_{k})}\Big)^{-1}\Big) \\
&\quad = \int \Theta^*(\mathrm{d}\t_{1}\,\mathrm{d}\mathbf{u})\,F(\t_{1},\mathbf{u})\, \bigg[ \sum\limits_{k=2}^{\infty} k\theta(k)\int_0^1 \mathrm{d}x \!\int\! \Theta(\mathrm{d}\t_{2})\cdots\!\int\! \Theta(\mathrm{d}\t_{k})\\
&\qquad \qquad\times \frac{\cc(\t_{1})}{\cc(\t_{1}) +\cdots + \cc(\t_{k})} \,\kappa\Big(\Big( x + \frac{1-x}{\cc(\t_{1}) +\cdots + \cc(\t_{k})}\Big)^{-1}\Big)\bigg].
\end{align*}
The proof is thus reduced to checking that, for every $r\geq 1$, $\kappa(r)$ is equal to
\begin{equation}
\label{eq:invarbistech2}
\sum\limits_{k=2}^{\infty} k\theta(k)\int_0^1\!\mathrm{d}x \!\int\! \Theta(\mathrm{d}\t_{2})\cdots\!\int\! \Theta(\mathrm{d}\t_{k})\frac{r}{r+ \cc(\t_{2}) +\cdots + \cc(\t_{k})} \kappa\Big(\big( x + \frac{1-x}{r+ \cc(\t_{2}) +\cdots + \cc(\t_{k})}\big)^{-1}\Big).
\end{equation}
To this end, we will reformulate the last expression in the following way. Under the probability measure $\P$, we introduce an i.i.d.~sequence $(\cc_i)_{i\geq0}$ distributed according to $\gamma$, and a random variable $N$ distributed according to $\theta$. In addition, under the same probability measure $\P$, let $U$ be uniformly distributed over $[0,1]$, $(\wt \cc_{i})_{i\geq 0}$ be an independent copy of $(\cc_i)_{i\geq0}$, and $\wt N$ be an independent copy of $N$. We assume that all these random variables are independent. Note that by definition, for every $r \geq 1$,
$$\kappa(r)= \E\Big[ \frac{r \wt N\wt \cc_1}{r+\wt \cc_1+\wt \cc_2+\cdots+\wt\cc_{\tilde N} -1}\Big].$$
It follows that (\ref{eq:invarbistech2}) can be written as
\begin{align*}
&\sum\limits_{k=2}^{\infty} k\theta(k)\E\Bigg[ \frac{r}{r+\cc_2+\cdots+\cc_{k}}\,\frac{\Big(U+\frac{1-U}{r+\cc_2+\cdots+\cc_{k}}\Big)^{-1}\wt N \wt \cc_1} {\Big(U+\frac{1-U}{r+\cc_2+\cdots+\cc_{k}}\Big)^{-1}+\wt \cc_1+\wt\cc_{2}+\cdots+\wt \cc_{\tilde N} -1}\Bigg]\\
&\,=r \sum\limits_{k=2}^{\infty} k\theta(k) \E\bigg[\frac{\wt N \wt \cc_1}{(r+\cc_2+\cdots+\cc_{k})\big(1+(\wt \cc_1+\wt\cc_{2}+\cdots+\wt \cc_{\tilde N} -1)(U+\frac{1-U}{r+\cc_2+\cdots+\cc_{k}})\big)}\bigg]\\
&\,=r \sum\limits_{k=2}^{\infty} k\theta(k) \E\bigg[\frac{ \wt \cc_1+\wt\cc_{2}+\cdots+\wt\cc_{\tilde N}}{(\wt \cc_1+\wt\cc_{2}+\cdots+\wt \cc_{\tilde N} -1)(U(r+\cc_2+\cdots+\cc_{k})+1-U)+r+\cc_2+\cdots+\cc_{k}}\bigg]\\
&\,=r \sum\limits_{k=2}^{\infty} k\theta(k) \E\bigg[\frac{ \wt \cc_1+\wt\cc_{2}+\cdots+\wt\cc_{\tilde N}}{(\wt \cc_1+\wt\cc_{2}+\cdots+\wt \cc_{\tilde N} )(U(r+\cc_2+\cdots+\cc_{k}-1)+1)+(r+\cc_2+\cdots+\cc_{k}-1)(1-U)}\bigg]\\
&\,=r \sum\limits_{k=2}^{\infty} k\theta(k) \E\bigg[\frac{1}{(r+\cc_2+\cdots+\cc_{k}-1)\big(U+\frac{1-U}{\tilde \cc_1+\tilde\cc_{2}+\cdots+\tilde\cc_{\tilde N}}\big)+1}\bigg]\\
&\,=r \sum\limits_{k=2}^{\infty} k\theta(k) \E\bigg[\frac{\widetilde{\cc}}{r+\widetilde{\cc}+\cc_2+\cdots+\cc_{k}-1}\bigg]\,=\,\E\bigg[\frac{r  N \widetilde{\cc}}{r+\widetilde{\cc}+\cc_2+\cdots+\cc_{ N}-1}\bigg],
\end{align*}
where 
$$\widetilde{\cc} \colonequals (U + \frac{1-U}{\wt \cc_1 +\cdots+\wt \cc_{\tilde N}})^{-1}$$ 
is independent of $(\cc_{i})_{i\geq 0}$ and $N$. By~(\ref{eq:rde}), the random variable $\widetilde{\cc}$ is also distributed according to~$\gamma$. So the right-hand side of the last long display is equal to $\kappa(r)$, which completes the proof of the proposition. \endproof

We normalize $\kappa_{\alpha}$ by setting
\begin{displaymath}
\widehat{\kappa}_{\alpha}(r)=\frac{\kappa_{\alpha}(r)}{\int \kappa_{\alpha}(\cc(\mathcal{T}))\Theta^{*}_{\alpha}(\mathrm{d}\mathcal{T}\mathrm{d}\mathbf{v})}= \frac{\kappa_{\alpha}(r)}{\int \kappa_{\alpha}(\cc(\mathcal{T}))\Theta_{\alpha}(\mathrm{d}\mathcal{T})}
\end{displaymath}
for every $r\geq 1$. Then $\widehat \kappa_{\alpha}(\cc(\mathcal{T}))\Theta^{*}_{\alpha}(\mathrm{d}\mathcal{T}\mathrm{d}\mathbf{v}) $ is a probability measure on $\mathbb{T}^{*}$ invariant under the shift $S$. To simplify notation, we set $\Upsilon^{*}_{\alpha}(\mathrm{d}\mathcal{T}\mathrm{d}\mathbf{v})\colonequals\widehat \kappa_{\alpha}(\cc(\mathcal{T}))\Theta^{*}_{\alpha}(\mathrm{d}\mathcal{T}\mathrm{d}\mathbf{v})$. Let $\pi_1$ be the canonical projection from $\mathbb{T}^{*}$ onto $\mathbb{T}$. The image of $\Upsilon^{*}_{\alpha}$ under this projection is the probability measure $\Upsilon_{\alpha}(\mathrm{d}\mathcal{T})\colonequals\widehat \kappa_{\alpha}(\cc(\mathcal{T}))\Theta_{\alpha}(\mathrm{d}\mathcal{T})$.

\begin{proposition}
\label{ergodicbis}
The shift $S$ acting on the probability space $(\mathbb{T}^{*},\Upsilon^{*}_{\alpha})$ is ergodic.
\end{proposition}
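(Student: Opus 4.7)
\emph{Reduction to tail triviality.} The plan is to follow the ergodic-theoretic strategy of~\cite{LPP95}, adapted to the continuous setting as in~\cite[Proposition 26]{CLG13}. Let $A \subseteq \T^*$ be a Borel $S$-invariant set with $p \colonequals \Upsilon^*_\alpha(A) > 0$; the aim is to show $p = 1$. Iterating the $\Upsilon^*_\alpha$-a.s.\ identity $\mathbf{1}_A = \mathbf{1}_A \circ S^n$ for every $n \geq 0$, the indicator $\mathbf{1}_A$ is measurable with respect to each $\sigma(S^k : k \geq n)$, hence with respect to the tail $\sigma$-algebra
\begin{equation*}
\mathcal{T}_\infty \colonequals \bigcap_{n \geq 0} \sigma\bigl(S^k(\t,\mathbf{v}) : k \geq n\bigr)
\end{equation*}
of the stationary Markov chain $(S^n(\t,\mathbf{v}))_{n \geq 0}$ under $\Upsilon^*_\alpha$. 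It therefore suffices to prove that $\mathcal{T}_\infty$ is $\Upsilon^*_\alpha$-trivial.

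\emph{Markov chain structure and mixing.} By the branching property of the CTGW tree $\Gamma\a$ and the flow property (Lemma~\ref{flow-property}), the process $(S^n(\t,\mathbf{v}))_{n \geq 0}$ under $\Upsilon^*_\alpha$ is a stationary Markov chain whose one-step transition from $(\t,\mathbf{v})$ selects the subtree $\t_{(v_1)}$ at the first branching point with conductance-proportional probability and shifts the ray accordingly; in particular, conditionally on $S^n(\t,\mathbf{v})$, the future evolution is independent of the past and has the same law as the orbit started from $S^n(\t,\mathbf{v})$. To prove tail triviality, it then suffices to establish the mixing property
\begin{equation*}
\mathbb{E}_{\Upsilon^*_\alpha}\!\bigl[f \circ S^n \,\bigm|\, \sigma(S^0)\bigr] \xrightarrow[n\to\infty]{L^1(\Upsilon^*_\alpha)} \int f \, \mathrm{d}\Upsilon^*_\alpha
\end{equation*}
for every bounded Borel $f$ on $\T^*$. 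Granted this, taking $f = \mathbf{1}_A$ and using $S$-invariance yields $\mathbf{1}_A = p$ $\Upsilon^*_\alpha$-a.s., forcing $p = 1$. The mixing statement itself will be derived from a coupling argument: two independent copies of the chain under $\Upsilon^*_\alpha$ can be coupled to coincide from some $\Upsilon^*_\alpha \otimes \Upsilon^*_\alpha$-a.s.~finite time onward, by exploiting the continuous positive density of $\gamma_\alpha$ on $[1,\infty)$ (Proposition~\ref{prop:unique}) to engineer a regeneration at a suitable branching point.

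\emph{Main obstacle.} The delicate step is implementing the coupling rigorously in the infinite-dimensional state space $\T^*$, where the chain is not obviously Harris recurrent. One must isolate a measurable family of ``reference configurations'' at branching points where the coupling can be attempted with uniformly positive probability, and then invoke a Borel--Cantelli-type argument along the ray to guarantee such configurations arise infinitely often. The positivity of $\widehat{\kappa}_\alpha$ together with the moment bounds established in Proposition~\ref{prop:unique} makes this construction feasible across the entire range $\alpha \in (1,2]$; however, the heavy tails of $\cc\a$ in the regime $\alpha \in (1,2)$, where only the first and second moments of $\gamma_\alpha$ are finite, require a more delicate truncation scheme than in the binary case $\alpha = 2$ treated in~\cite{CLG13}.
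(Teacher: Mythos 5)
Your proposal is an outline rather than a proof: you correctly reduce the problem to tail triviality of the stationary Markov chain $(S^n(\t,\mathbf{v}))_{n\geq 0}$ under $\Upsilon^*_\alpha$, and you propose to establish tail triviality via a mixing/coupling argument; but you then explicitly defer the coupling construction (``the delicate step is implementing the coupling rigorously\dots''). That step is the entire content of the argument you chose, and you have not carried it out. As you yourself note, the chain lives on a huge state space $\T^*$ with no apparent atomic or small-set structure; heavy tails of $\cc\a$ for $\alpha\in(1,2)$ aggravate the issue; and nothing in the proposal actually exhibits a regeneration scheme, verifies Harris-type recurrence, or controls the coupling time. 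So as written this does not constitute a proof.

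The paper avoids coupling entirely and follows a cleaner route, which you may wish to compare with. The observation is that the shifted chain $(Z_n)$ of trees (the image under $\pi_1$) is a Markov chain with transition kernel
\begin{equation*}
\mathbf{p}(\mathcal{T},\mathrm{d}\mathcal{T}')=\sum_{i=1}^{k_{\varnothing}} \frac{\mathcal{C}(\mathcal{T}_{(i)})}{\mathcal{C}(\mathcal{T}_{(1)})+\cdots+\mathcal{C}(\mathcal{T}_{(k_{\varnothing})})}\,\delta_{\mathcal{T}_{(i)}}(\mathrm{d}\mathcal{T}')
\end{equation*}
under its stationary measure $\Upsilon_\alpha$. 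Given an $S$-invariant set $A$, the paper pulls it back through the map $(\t,\mathbf{v})\mapsto(Z_0,Z_1,\dots)$ to a shift-invariant event for the chain $Z$, and then applies the standard result (Proposition 16.2 in \cite{LP10}) to replace it by an event of the form $\{Z_0\in D\}$ with $\mathbf{p}(\mathcal{T},D)=\mathbf{1}_D(\mathcal{T})$ for $\Theta_\alpha$-a.e.\ $\mathcal{T}$. The decisive point is then purely combinatorial: since $\mathbf{p}(\mathcal{T},\cdot)$ charges \emph{every} subtree $\mathcal{T}_{(i)}$ at the first branching point (all conductances are $\geq 1 >0$), $\mathbf{p}$-invariance of $D$ forces $\mathcal{T}\in D$ iff each subtree $\mathcal{T}_{(i)}\in D$; the branching property of the CTGW tree then yields the fixed-point equation $\Theta_\alpha(D)=\sum_{k\geq 2}\theta_\alpha(k)\,\Theta_\alpha(D)^k$, and since the right-hand side is strictly below $r$ for $r\in(0,1)$ (convex generating function supported on $\{2,3,\dots\}$), one concludes $\Theta_\alpha(D)\in\{0,1\}$. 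This circumvents all the difficulties you flagged: no truncation of heavy tails, no coupling time estimates, no Harris recurrence. If you want to salvage the coupling route you would need to actually exhibit the regeneration structure; otherwise, the branching fixed-point argument is both shorter and works uniformly over $\alpha\in(1,2]$.
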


\begin{proof}
Our arguments proceed in a similar way as in the proof of \cite[Proposition 13]{CLG13}. We define a transition kernel $\mathbf{p}(\mathcal{T},\mathrm{d}\mathcal{T}')$ on $\mathbb{T}$ by setting
\begin{displaymath}
\mathbf{p}(\mathcal{T},\mathrm{d}\mathcal{T}')=\sum\limits_{i=1}^{k_{\varnothing}} \frac{\mathcal{C}(\mathcal{T}_{(i)})}{\mathcal{C}(\mathcal{T}_{(1)})+\cdots+\mathcal{C}(\mathcal{T}_{(k_{\varnothing})})}\,\delta_{\mathcal{T}_{(i)}}(\mathrm{d}\mathcal{T}').
\end{displaymath}
Informally, under the probability measure $\mathbf{p}(\mathcal{T},\mathrm{d}\mathcal{T}')$, we choose one of the subtrees of $\mathcal{T}$ obtained at the first branching point, with probability equal to its harmonic measure. 

For every integer $n\geq 1$, we denote by $S^n$ the mapping on $\mathbb{T}^{*}$ obtained by iterating $n$ times the shift $S$, and then we consider the process $(Z_n)_{n\geq 0}$ on the probability space $(\mathbb{T}^{*},\Upsilon^{*})$ with values in $\mathbb{T}$, defined by $Z_0(\mathcal{T},\mathbf{v})=\mathcal{T}$ and 
\begin{displaymath}
Z_n(\mathcal{T},\mathbf{v})=\pi_1\big( S^n(\mathcal{T},\mathbf{v})\big)
\end{displaymath}
for every $n\geq 1$. According to Proposition~\ref{invariantbis} and the flow property of harmonic measure, the process $(Z_n)_{n\geq 0}$ is a Markov chain with transition kernel $\mathbf{p}$ under its stationary measure $\Upsilon(\mathrm{d}\mathcal{T})$.

We write $\mathbb{T}^{\infty}$ for the set of all infinite sequences $(\mathcal{T}^0,\mathcal{T}^1,\ldots)$ of elements in $\mathbb{T}$, and let $\widehat{\mathbb{T}}^{\infty}$ be the set of all infinite sequences $(\mathcal{T}^0,\mathcal{T}^1,\ldots)$ in $\mathbb{T}^{\infty}$, such that, for every integer $j\geq 1$, $\mathcal{T}^j$ is one of the subtrees of $\mathcal{T}^{j-1}$ above the first branching point of $\mathcal{T}^{j-1}$. Note that $\widehat{\mathbb{T}}^{\infty}$ is a measurable subset of $\mathbb{T}^{\infty}$ and that $(Z_n(\mathcal{T},\mathbf{v}))_{n\geq 0}\in \widehat{\mathbb{T}}^{\infty}$ for every $(\mathcal{T},\mathbf{v})\in \mathbb{T}^{*}$. If $(\mathcal{T}^0,\mathcal{T}^1,\ldots)\in \widehat{\mathbb{T}}^{\infty}$, there exists a geodesic ray $\mathbf{v}$ in $\mathcal{T}^0$ such that $\mathcal{T}^j=S^j(\mathcal{T}^0,\mathbf{v})$ for every $j\geq 1$, and we set $\phi(\mathcal{T}^0,\mathcal{T}^1,\ldots)\colonequals (\mathcal{T}^0,\mathbf{v})$. Notice that $\mathbf{v}$ is a priori not unique, but to make the previous definition rigorous we can take the smallest possible $\mathbf{v}$ in lexicographical ordering (of course for the random trees that we consider later this uniqueness problem does not arise). In this way, we define a measurable mapping $\phi$ from $\widehat{\mathbb{T}}^{\infty}$ into $\mathbb{T}^*$ such that 
\begin{equation}
\label{eq:Psi-erg}
\phi(Z_0(\mathcal{T},\mathbf{v)},Z_1(\mathcal{T},\mathbf{v}),\ldots)=(\mathcal{T},\mathbf{v}), \qquad \Upsilon^*\mbox{-a.s.}
\end{equation}

Now given a measurable subset $A$ of $\mathbb{T}^{*}$ such that $S^{-1}(A)=A$, we aim at proving that $\Upsilon^{*}(A)\in \{0,1\}$. To this end, we consider the pre-image $B=\phi^{-1}(A)$, which is a measurable subset of $\widehat{\mathbb{T}}^{\infty}\subset \mathbb{T}^{\infty}$. Due to the previous constructions, $B$ is shift-invariant for the Markov chain $Z$ in the sense that 
\begin{displaymath}
\{(Z_0,Z_1,\ldots)\in B\}= \{(Z_1,Z_2,\ldots)\in B\},\quad \hbox{a.s.}
\end{displaymath}
Using Proposition 16.2 in~\cite{LP10}, we then obtain a measurable subset $D$ of $\mathbb{T}$, such that 
$$\mathbf{1}_{B}(Z_0,Z_1,\ldots)=\mathbf{1}_{D}(Z_0) \quad \hbox{a.s.},$$
and moreover $\mathbf{p}(\mathcal{T},D)=\mathbf{1}_D(\mathcal{T})$, $\Upsilon(\mathrm{d}\mathcal{T})$-a.s. It follows thus from~(\ref{eq:Psi-erg}) that $\Upsilon^{*}$-a.s.~we have $(\mathcal{T},\mathbf{v})\in A$ if and only if $\mathcal{T}\in D$.

However from the property $\mathbf{p}(\mathcal{T},D)=\mathbf{1}_D(\mathcal{T})$, $\Upsilon(\mathrm{d}\mathcal{T})$-a.s., one can verify that $\Upsilon(D)\in\{0,1\}$. First note that this property also implies that $\mathbf{p}(\mathcal{T},D)=\mathbf{1}_D(\mathcal{T})$, $\Theta(\mathrm{d}\mathcal{T})$-a.s. Hence, $\Theta(\mathrm{d}\mathcal{T})$-a.s., the tree $\mathcal{T}$ belongs to $D$ if and only if each of its subtrees above the first branching point belongs to $D$ (it is clear that that the measure $\mathbf{p}(\mathcal{T},\cdot)$ assigns a positive mass to each of these subtrees). Then, the branching property of the CTGW tree shows that
$$\Theta(D)=  \sum_{k=2}^\infty \theta(k)\, \Theta(D)^k$$
which is only possible if $\Theta(D)=0$ or $1$, or equivalently if $\Upsilon(D)=0$ or $1$. Therefore $\Upsilon^*(A)$ is either 0 or 1, which completes the proof.
\end{proof}

\subsubsection{Proof of Theorem~\ref{thm:dim-stable}}

Having established Proposition~\ref{invariantbis} and Proposition~\ref{ergodicbis}, we can now apply the ergodic theorem to the two functionals on $\mathbb{T}^{*}$ defined as follows. First let $J_n(\mathcal{T},\mathbf{v})$ denote the height of the $n$-th branching point on the geodesic ray $\mathbf{v}$. One immediately verifies that, for every $n\geq 1$,
$$J_n= \sum_{i=0}^{n-1} J_1\circ S^i. $$
If $M=\int \kappa(\mathcal{C}(\mathcal{T})) \Theta^*(\mathrm{d}\mathcal{T}\mathrm{d}\mathbf{v})$, it follows from the ergodicity that $\Theta^{*}$-a.s.,
\begin{equation}
\label{ergodic1}
\frac{1}{n} J_n \build{\longrightarrow}_{n\to\infty}^{} M^{-1}\int J_1(T,\mathbf{v})\kappa(\mathcal{C}(\mathcal{T})) \Theta^{*}(\mathrm{d}\mathcal{T}\mathrm{d}\mathbf{v}).
\end{equation}
Note that the limit can be written as 
\begin{displaymath}
M^{-1} \E\bigg[|\log(1-U)|\,\kappa \Big( \Big(  U + \frac{1-U}{\cc_1 +\cdots+\cc_N} \Big)^{-1}\Big)\bigg]
\end{displaymath}
with the notation used in the proof of Proposition~\ref{invariantbis}. 

Secondly, let $\mathbf{x}_{n,\mathbf{v}}$ denote the $n+$1-st branching point on the geodesic ray $\mathbf{v}$. If $\mathbf{v}=(v_1,v_2,\ldots)$, then $\mathbf{x}_{n,\mathbf{v}}=((v_1,\ldots,v_n),J_{n+1}(\mathcal{T},\mathbf{v}))$ with the notation of Section~\ref{sec:ctgwtree}. We set for every $n\geq 1$,
$$F_n(\mathcal{T},\mathbf{v}) \colonequals \log \nu_{\mathcal{T}}(\{\mathbf{u}\in\partial \mathcal{T} \colon \mathbf{x}_{n,\mathbf{v}}\prec\mathbf{u}\}).$$
By the flow property of harmonic measure (Lemma~\ref{flow-property}), we have
$$F_n=\sum_{i=0}^{n-1} F_1\circ S^i,$$
and by the ergodic theorem, $\Theta^{*}$-a.s.,
\begin{equation}
\label{ergodic2}
\frac{1}{n} F_n \build{\longrightarrow}_{n\to\infty}^{} M^{-1} \int F_1(\mathcal{T},\mathbf{v}) \kappa(\mathcal{C}(\mathcal{T})) \Theta^{*}(\mathrm{d}\mathcal{T}\mathrm{d}\mathbf{v}),
\end{equation}
where the limit can be written as
\begin{displaymath}
M^{-1}\E\bigg[ \frac{N \cc_1}{\cc_1+\cdots+\cc_N}\,\log\Big(\frac{\cc_1}{\cc_1+\cdots+\cc_N}\Big)\,\kappa \Big(\Big(U + \frac{1-U}{\cc_1 +\cdots+\cc_N}\Big)^{-1}\Big)\bigg].
\end{displaymath}
By combining~(\ref{ergodic1}) and~(\ref{ergodic2}), we obtain that the convergence~(\ref{eq:1}) holds with limit 
\begin{equation*}
-\beta= \frac{ \E\Big[ \frac{N \cc_1}{\cc_1+\cdots+\cc_N}\,\log\Big(\frac{\cc_1}{\cc_1+\cdots+\cc_N}\Big)\,\kappa \Big(\Big(U + \frac{1-U}{\cc_1 +\cdots+\cc_N}\Big)^{-1}\Big)\Big]} {\E\Big[|\log(1-U)|\,\kappa \Big(\Big(U + \frac{1-U}{\cc_1 +\cdots+\cc_N}\Big)^{-1}\Big)\Big]}.
\end{equation*}

\begin{proposition}
\label{prop:beta<}
We have $\beta<\frac{1}{\alpha-1}$.
\end{proposition}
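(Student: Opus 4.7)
The plan is to exploit the explicit ergodic formula for $-\beta$ obtained at the end of the preceding subsection. Writing
\begin{displaymath}
-\beta\;=\;\frac{A}{B},\quad A\;=\;\E\!\left[\tfrac{N\cc_1}{S_N}\log\tfrac{\cc_1}{S_N}\,\kappa_{\alpha}(C)\right],\quad B\;=\;\E\!\left[|\log(1-U)|\kappa_{\alpha}(C)\right],
\end{displaymath}
where $S_N=\cc_1+\cdots+\cc_N$, $C=(U+(1-U)/S_N)^{-1}$, and noting $B>0$, the target becomes $A+(m_\alpha-1)B>0$ with $m_\alpha=\alpha/(\alpha-1)$.

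I would first symmetrize. By exchangeability of the i.i.d.\ copies $(\cc_i)$ we have $A=-\E[\kappa_\alpha(C)\,H(p)]$, where $p_i=\cc_i/S_N$ and $H$ denotes Shannon entropy. Proposition~\ref{prop:unique} tells us that $\gamma_\alpha$ has a continuous density on $[1,\infty)$, so conditionally on $N\geq 2$ the vector $(\cc_i)$ is a.s.\ non-constant, hence $p$ is a.s.\ not the uniform distribution and $H(p)<\log N$ strictly. This yields a first source of slack, giving $A>-\E[\kappa_\alpha(C)\log N]$.

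A second, purely deterministic, source of slack comes from Jensen's inequality $\E[\log N]\leq\log m_\alpha$ together with the elementary bound $\log x<x-1$ for $x=m_\alpha\geq 2$, which gives $\E[\log N]<m_\alpha-1$, combined with the normalization $\E[|\log(1-U)|]=1$. Heuristically, these two slacks should combine to force the scalar inequality
\begin{displaymath}
\E[\kappa_\alpha(C)\log N]\;<\;(m_\alpha-1)\,\E[\kappa_\alpha(C)\,|\log(1-U)|],
\end{displaymath}
which together with the previous step would yield the proposition.

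I expect the main obstacle to be precisely this scalar estimate, because of the joint dependence of $\kappa_\alpha(C)$ on $U$, $N$, and $(\cc_i)$: indeed $\kappa_\alpha(C)$ is a decreasing function of $U$ while $|\log(1-U)|$ is increasing, so a naive FKG-type argument produces $\E[\kappa_\alpha(C)|\log(1-U)|]\leq\E[\kappa_\alpha(C)]$, which points in the unfavourable direction. To circumvent this, I would condition first on $(N,(\cc_i))$, carry out the one-dimensional $U$-integral explicitly via the substitution $r=(U+(1-U)/S_N)^{-1}$, and then invoke the second-moment identity $\E[(\mathcal{C}^{(\alpha)})^2]=m_\alpha\E[\mathcal{C}^{(\alpha)}]$ from Proposition~\ref{prop:unique} together with the explicit expression of $\kappa_\alpha$ given in Proposition~\ref{invariantbis}. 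An alternative route would be to first prove formula~\eqref{eq:beta-value} and then deduce the bound via Cauchy-Schwarz applied to $(\E\mathcal{C}^{(\alpha)})^2\leq \E[(\mathcal{C}^{(\alpha)})^2]=m_\alpha\E[\mathcal{C}^{(\alpha)}]$ combined with an elementary lower bound on $\E[\cc_1\cc_2/(\cc_1+\cc_2-1)]$ in terms of $\E[\mathcal{C}^{(\alpha)}]$.
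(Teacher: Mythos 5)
There is a genuine gap. The bound $H(p)<\log N$ on the Shannon entropy of the harmonic weights $p_i=\mathcal{C}_i/S_N$ is correct but too lossy, and it pushes all the difficulty into the residual scalar inequality
\begin{equation*}
\E\big[\kappa_\alpha(C)\log N\big]\;\leq\;(m_\alpha-1)\,\E\big[\kappa_\alpha(C)\,|\log(1-U)|\big],
\end{equation*}
which you have neither proved nor reduced to anything known, and which is not obviously true: $\kappa_\alpha(C)$ is decreasing in $U$ while $|\log(1-U)|$ is increasing, so, as you note yourself, the FKG correlation works against you. Neither fall-back route closes the gap. Conditioning on $(N,(\mathcal{C}_i))$ and integrating out $U$ does not decouple the joint law; and the Cauchy--Schwarz route via~(\ref{eq:beta-value}) together with $\E[(\mathcal{C}^{(\alpha)})^2]=m_\alpha\E[\mathcal{C}^{(\alpha)}]$ reduces the claim to
\begin{equation*}
\iint\gamma_\alpha(\mathrm{d}s)\gamma_\alpha(\mathrm{d}t)\,\frac{st}{s+t-1}\;>\;\frac{\big(\int s\,\gamma_\alpha(\mathrm{d}s)\big)^{3}}{2\int s^2\gamma_\alpha(\mathrm{d}s)-\int s\,\gamma_\alpha(\mathrm{d}s)},
\end{equation*}
which does not hold at the level of the integrand and would itself require a separate argument.

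The paper avoids any such auxiliary estimate by comparing the harmonic weights $\nu^*_{\mathcal{T}}(i)$ not with the uniform distribution on the $k_\varnothing$ children but with the size-biased weights $w_{\mathcal{T}}(i)=e^{-J(\mathcal{T})/(\alpha-1)}\mathcal{W}(\mathcal{T}_{(i)})/\mathcal{W}(\mathcal{T})$ built from the Kesten--Stigum martingale limit $\mathcal{W}$. The relative-entropy inequality $\sum_i\nu^*_{\mathcal{T}}(i)\log\big(w_{\mathcal{T}}(i)/\nu^*_{\mathcal{T}}(i)\big)\leq 0$ provides the same strict slack you exploit, but the cross-entropy $\sum_i\nu^*_{\mathcal{T}}(i)\log\big(1/w_{\mathcal{T}}(i)\big)$ equals $\frac{1}{\alpha-1}J(\mathcal{T})$ plus a telescoping term in $\log\mathcal{W}$ whose $\Upsilon^*_\alpha$-average vanishes exactly, by the $S$-invariance of $\Upsilon^*_\alpha$ and the integrability of $\log\mathcal{W}$. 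That exact cancellation is what is missing from your argument: the reference measure in the entropy comparison must be the martingale weights $w_{\mathcal{T}}$, not the uniform one.
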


\proof 
We use the notation 
$$\mathcal{W}(\t)=\lim_{r\to \infty} e^{-\frac{r}{\alpha-1}}\,\#\t_r \,,$$
which exists $\Theta(\mathrm{d}\t)$-a.s.~by a martingale argument. Since $\sum \theta(k)k\log k< \infty$, the Kesten-Stigum theorem (for CTGW trees, see e.g.~\cite[Theorem III.7.2]{AN}) implies that the previous convergence holds in the $L^{1}$-sense and $\int \mathcal{W}(\t)\Theta(\mathrm{d}\t) =1$. Moreover, $\Theta(\mathcal{W}(\t)=0)=0$ and the Laplace transform 
\begin{displaymath}
\int e^{-u\mathcal{W}(\t)} \Theta(\mathrm{d}\t) = 1-\frac{u}{(1+u^{\alpha-1})^{\frac{1}{\alpha-1}}} \quad \mbox{ for any } u\in (0,\infty)
\end{displaymath}
can be obtained by applying Theorem III.8.3 in~\cite{AN} together with (\ref{eq:gene-fct}). In particular, it follows from a Tauberian theorem (cf.~\cite[Chapter XIII.5]{F71}) that $\int |\log \mathcal{W}(\t)| \Theta(\mathrm{d}\t)<\infty$.

Let $\t_{(1)},\ldots,\t_{(k_{\varnothing})}$ be the subtrees of $\t$ at the first branching point, and let $J(\t)=J_1(\t,\mathbf{v})$ be the height of the first branching point. Then, $\Theta(\mathrm{d}\t)$-a.s.
\begin{equation*}
\mathcal{W}(\t)=e^{-\frac{J(\t)}{\alpha-1}}\big(\mathcal{W}(\t_{(1)})+\cdots+\mathcal{W}(\t_{(k_{\varnothing})})\big),
\end{equation*}
so that we can define a probability measure $w_{\t}$ on $\{1,2,\ldots,k_{\varnothing}\}$ by setting
\begin{displaymath}
w_{\t}(i)=\frac{e^{-\frac{J(\t)}{\alpha-1}} \mathcal{W}(\t_{(i)})}{\mathcal{W}(\t)}, \qquad 1 \leq i\leq k_{\varnothing}.
\end{displaymath}
On the other hand, for $1\leq i\leq k_{\varnothing}$, let $\nu_{\t}^{*}(i)$ denote the mass assigned by the harmonic measure $\nu_{\t}$ to the rays ``contained'' in $\t_{(i)}$, that is, 
\begin{displaymath}
\nu_{\t}^{*}(i)=\int \mathbf{1}_{\{v_1=i\}}\nu_{\t}(\mathrm{d}\mathbf{v})=\frac{\mathcal{C}(\mathcal{T}_{(i)})}{\mathcal{C}(\mathcal{T}_{(1)})+\cdots+\mathcal{C}(\mathcal{T}_{(k_{\varnothing})})} .
\end{displaymath}
By a concavity argument, 
\begin{equation}
\label{eq:concavity}
\sum\limits_{i=1}^{k_{\varnothing}} \nu_{\t}^{*}(i)\log \frac{w_{\t}(i)}{\nu_{\t}^{*}(i)} \leq 0,
\end{equation}
and the inequality is strict with positive $\Theta$-probability. 

Recall that $\Upsilon(\mathrm{d}\t)=M^{-1}\kappa(\mathcal{C}(\t))\Theta(\mathrm{d}\t)$ is the image of the probability measure $\Upsilon^{*}(\mathrm{d}\t \mathrm{d}\mathbf{v})$ under the canonical projection $\pi_1$ from $\T^{*}$ to $\T$. According to the discussion before Proposition~\ref{prop:beta<}, we can write  
\begin{equation*}
\beta= \Big(\int \Upsilon(\mathrm{d}\t)J(\t)\Big)^{-1} \int \Upsilon(\mathrm{d}\t) \sum\limits_{i=1}^{k_{\varnothing}}\nu_{\t}^{*}(i)\log \frac{1}{\nu_{\t}^{*}(i)}, 
\end{equation*}
which by (\ref{eq:concavity}) is strictly smaller than 
\begin{displaymath}
\Big(\int \Upsilon(\mathrm{d}\t)J(\t)\Big)^{-1}\int \Upsilon(\mathrm{d}\t) \sum\limits_{i=1}^{k_{\varnothing}}\nu_{\t}^{*}(i)\log \frac{1}{w_{\t}(i)}.
\end{displaymath}
However, it follows from the definition of $w_{\t}$ that
\begin{align*}
\int \Upsilon(\mathrm{d}\t) \sum\limits_{i=1}^{k_{\varnothing}}\nu_{\t}^{*}(i)\log \frac{1}{w_{\t}(i)} = & \frac{1}{\alpha-1} \int \Upsilon(\mathrm{d}\t)J(\t) + \int \Upsilon(\mathrm{d}\t) \sum\limits_{i=1}^{k_{\varnothing}}\nu_{\t}^{*}(i)\log \frac{\mathcal{W}(\t)}{\mathcal{W}(\t_{(i)})}\\
=& \frac{1}{\alpha-1} \int \Upsilon(\mathrm{d}\t)J(\t) + \int \Upsilon^{*}(\mathrm{d}\t \mathrm{d}\mathbf{v}) \log \frac{\mathcal{W}\circ \pi_{1}(\t,\mathbf{v})}{\mathcal{W}\circ \pi_1(S(\t,\mathbf{v}))}\\
=& \frac{1}{\alpha-1} \int \Upsilon(\mathrm{d}\t)J(\t)\,,
\end{align*}
where in the last equality we used the fact that $\Upsilon^*$ is invariant under the shift $S$, and that $\log \mathcal{W}(\t)$ is integrable under $\Theta(\mathrm{d}\t)$ hence also under $\Upsilon^{*}$. Therefore, we have shown $\beta<\frac{1}{\alpha-1}$ and the proof of Theorem~\ref{thm:dim-stable} is completed. 
\endproof

\subsubsection{Proof of Proposition~\ref{prop:dim-formula}}

We have seen above that  
\begin{equation}
\label{eq:beta2}
\beta= \frac{ \E\Big[ \frac{N \cc_1}{\cc_1+\cdots+\cc_N}\,\log\Big(\frac{\cc_1}{\cc_1+\cdots+\cc_N}\Big)\,\kappa \Big(\Big(U + \frac{1-U}{\cc_1 +\cdots+\cc_N}\Big)^{-1}\Big)\Big]} {\E\Big[\log(1-U)\,\kappa \Big(\Big(U + \frac{1-U}{\cc_1 +\cdots+\cc_N}\Big)^{-1}\Big)\Big]}.
\end{equation}
On account of Proposition~\ref{prop:unique}, the proof of Proposition~\ref{prop:dim-formula} will be completed if we can verify that the preceding expression for $\beta$ is consistent with formula~(\ref{eq:beta-value}). In the following calculations, we will keep using the same notation introduced in the proof of Proposition~\ref{invariantbis}. 

Firstly, the numerator of the right-hand side of~(\ref{eq:beta2}) is equal to 
\begin{align*}
&\E\bigg[ \frac{N \cc_1}{\cc_1+\cdots+\cc_N}\,\log\Big(\frac{\cc_1}{\cc_1+\cdots+\cc_N}\Big)\,\frac{\big(U + \frac{1-U}{\cc_1 +\cdots+\cc_N}\big)^{-1}(\wt\cc_1 +\cdots+\wt\cc_{\tilde N})}{\big(U + \frac{1-U}{\cc_1 +\cdots+\cc_N}\big)^{-1}+\wt\cc_1 +\cdots+\wt\cc_{\tilde N}-1}\bigg]\\
&\quad=\E\bigg[\frac{N \cc_1 (\wt\cc_1 +\cdots+\wt\cc_{\tilde N})\log\frac{\cc_1}{\cc_1+\cdots+\cc_N}}{\cc_1+\cdots+\cc_N+\wt\cc_1 +\cdots+\wt\cc_{\tilde N}-1+U(\cc_1+\cdots+\cc_N-1)(\wt\cc_1 +\cdots+\wt\cc_{\tilde N}-1)}\bigg].
\end{align*}
For every integer $k\geq 2$, we define for $x\in (1,\infty)$ the function
\begin{displaymath}
G_{c_{1},\ldots,c_{k},u}(x)\colonequals \frac{xc_{1}\log \frac{c_{1}}{c_{1}+\cdots+c_{k}}}{c_{1}+\cdots+c_{k}+x-1+(c_{1}+\cdots+c_{k}-1)(x-1)u}\,,
\end{displaymath}
where $u\in (0,1)$ and $c_{1},\ldots,c_{k} \in (1,\infty)$. We can apply (\ref{eq:star}) to get 
\begin{align*}
&\E\big[G_{\cc_{1},\ldots,\cc_{k},U}(\wt\cc_{1}+\cdots+\wt\cc_{\tilde N})\mid \cc_{1},\ldots,\cc_{k},U\big]\\
&\quad= \E\left[\frac{\cc_{0}^{2}\cc_{1}(\cc_{1}+\cdots+\cc_{k})\log \frac{\cc_{1}}{\cc_{1}+\cdots+\cc_{k}}}{\big(\cc_{0}+\cc_{1}+\cdots+\cc_{k}-1+(\cc_{0}-1)(\cc_{1}+\cdots+\cc_{k}-1)U\big)^{2}} \,\middle|\, \cc_{1},\ldots,\cc_{k},U\right].
\end{align*}
With help of the last display, the numerator of the right-hand side of~(\ref{eq:beta2}) becomes
\begin{displaymath}
\E\bigg[\frac{N\cc_{0}^{2}\cc_{1}(\cc_{1}+\cdots+\cc_{N})\log \frac{\cc_{1}}{\cc_{1}+\cdots+\cc_{N}}}{\big(\cc_{0}+\cc_{1}+\cdots+\cc_{N}-1+(\cc_{0}-1)(\cc_{1}+\cdots+\cc_{N}-1)U\big)^{2}}\bigg].
\end{displaymath}
We now integrate with respect to $U$ and recall that for $a,b,c>0$, $\int_{0}^{1} \mathrm{d}u \frac{a}{(b+cu)^{2}}=\frac{a}{b(b+c)}$. So the numerator of the right-hand side of~(\ref{eq:beta2}) coincides with 
\begin{displaymath}
\E\bigg[\frac{N\cc_{0}\cc_{1}\log \frac{\cc_{1}}{\cc_{1}+\cc_{2}+\cdots+\cc_{N}}}{\cc_{0}+\cc_{1}+\cdots+\cc_{N}-1}\bigg].
\end{displaymath}

On the other hand, the denominator of the right-hand side of~(\ref{eq:beta2}) is equal to
\begin{align*}
&\E\bigg[\frac{(\cc_{1}+\cdots+\cc_{N})(\wt\cc_{1}+\cdots+\wt\cc_{\tilde N})\log(1-U)}{\cc_{1}+\cdots+\cc_{N}+(\wt\cc_{1}+\cdots+\wt\cc_{\tilde N}-1)(\cc_{1}+\cdots+\cc_{N})U+(\wt\cc_{1}+\cdots+\wt\cc_{\tilde N}-1)(1-U)}\bigg]\\
&\quad=\,\, \E\bigg[\frac{(\cc_{1}+\cdots+\cc_{N})(\wt\cc_{1}+\cdots+\wt\cc_{\tilde N})\log(1-U)}{\cc_{1}+\cdots+\cc_{N}+\wt\cc_{1}+\cdots+\wt\cc_{\tilde N}-1+(\cc_{1}+\cdots+\cc_{N}-1)(\wt\cc_{1}+\cdots+\wt\cc_{\tilde N}-1)U}\bigg]\\
&\quad=\,\, \E\bigg[\frac{\cc_{0}^{2}(\cc_{1}+\cdots+\cc_{N})^{2}\log(1-U)}{\big(\cc_{0}+\cc_{1}+\cdots+\cc_{N}-1+(\cc_{0}-1)(\cc_{1}+\cdots+\cc_{N}-1)U\big)^{2}}\bigg]\\
&\quad=\,\, -\,\E\bigg[\frac{\cc_{0}^{2}\cc_{1}^{2}\big(-1+\cc_{0}+\cc_{1}-2\cc_{0}\cc_{1}+(\cc_{0}-1)(\cc_{1}-1)U\big)\log(1-U)}{\big(\cc_{0}+\cc_{1}-1+(\cc_{0}-1)(\cc_{1}-1)U\big)^{3}}\bigg],
\end{align*}
where we have repeatedly used~(\ref{eq:star}) in the last two equalities, the first time to replace $\wt\cc_{1}+\cdots+\wt\cc_{\tilde N}$ by $\cc_{0}$, the second time to replace $\cc_{1}+\cdots+\cc_{N}$ by $\cc_{1}$. In order to integrate with respect to $U$, we appeal to the identity that for $a,b,c>0$,
\begin{displaymath}
\int_{0}^{1}\mathrm{d}u \frac{(a+bu)\log(1-u)}{(c+bu)^{3}}=\frac{b(c-a)+(2b+c+a)c\log \frac{c}{b+c}}{2bc(b+c)^{2}}.
\end{displaymath}
Applying this formula, we see that the denominator of the right-hand side of~(\ref{eq:beta2}) coincides with 
\begin{displaymath}
-\E\Big[\frac{\cc_{0}\cc_{1}}{\cc_{0}+\cc_{1}-1}\Big].
\end{displaymath}
We have thus obtained the following formula
\begin{equation}
\label{eq:beta1}
\beta=\frac{\E\Big[\frac{N\cc_{0}\cc_{1}}{\cc_{0}+\cc_{1}+\cdots+\cc_{N}-1}\log \frac{\cc_{1}+\cc_{2}+\cdots+\cc_{N}}{\cc_{1}}\Big]}{\E\Big[ \frac{ \mathcal{C}_{0} \mathcal{C}_{1}}{ \mathcal{C}_{0}+ \mathcal{C}_{1}-1} \Big]}.
\end{equation}

By a symmetry argument, the numerator of the right-hand side of~(\ref{eq:beta1}) is equal to
\begin{align}
& \E\Big[\frac{N\cc_{0}\cc_{1}\log (\cc_{1}+\cc_{2}+\cdots+\cc_{N})}{\cc_{0}+\cc_{1}+\cdots+\cc_{N}-1}\Big]-\E\Big[\frac{N\cc_{0}\cc_{1}\log (\cc_{1})}{\cc_{0}+\cc_{1}+\cdots+\cc_{N}-1}\Big] \nonumber \\
& \quad=  \E\Big[\frac{\cc_{0}(\cc_{1}+\cc_{2}+\cdots+\cc_{N})\log (\cc_{1}+\cc_{2}+\cdots+\cc_{N})}{\cc_{0}+\cc_{1}+\cdots+\cc_{N}-1}\Big]-\E\Big[\frac{\cc_{0}(\cc_{1}+\cc_{2}+\cdots+\cc_{N})\log (\cc_{0})}{\cc_{0}+\cc_{1}+\cdots+\cc_{N}-1}\Big] \nonumber \\ 
& \quad= \E\big[f(\cc_{1}+\cc_{2}+\cdots+\cc_{N})\big] - \E\big[g(\cc_{1}+\cc_{2}+\cdots+\cc_{N})\big], \label{eq:tech-fg}
\end{align}
where we have set, for every $x\geq 1$,
$$ f(x) = \E \left[ \frac{ \mathcal{C}_0x}{ \mathcal{C}_0+x-1}\log x\right] \quad \mbox{ and }\quad g(x) = \E \left[ \frac{ \mathcal{C}_0x}{ \mathcal{C}_0+x-1}\log \mathcal{C}_0\right]. $$ 
We can replace $\E[f(\cc_{1}+\cc_{2}+\cdots+\cc_{N})]$ by $\E[f(\mathcal{C}_{1})] + \E[ \mathcal{C}_{1}( \mathcal{C}_{1}-1)f'( \mathcal{C}_{1})] $ using \eqref{eq:star}, and similarly for $g$, to obtain 
\begin{displaymath} 
\E\big[f(\cc_{1}+\cc_{2}+\cdots+\cc_{N})\big] - \E\big[g(\cc_{1}+\cc_{2}+\cdots+\cc_{N})\big] =\ \frac{1}{2} \left( \E[ \mathcal{C}_{0}]^2 - \E \left[ \frac{ \mathcal{C}_{0} \mathcal{C}_{1}}{ \mathcal{C}_{0}+ \mathcal{C}_{1}-1} \right] \right) .
\end{displaymath}
Plugging this into~(\ref{eq:tech-fg}) yields the required formula~(\ref{eq:beta-value}), and hence finishes the proof of Proposition~\ref{prop:dim-formula}.

\subsection{A second approach to Theorem~\ref{thm:dim-stable}}
\label{sec:second-appr-conti}
In this section, we outline a different approach to Theorem~\ref{thm:dim-stable}, which contains certain intermediate results of independent interest. This approach involves an invariant measure for the environment seen by Brownian motion on the CTGW tree $\Gamma\a$ at the last visit of a fixed height. This is similar to Section~3 of~\cite{CLG13}, and for this reason we will leave the proofs to Section~\ref{sec:appendix}.

We fix the index $\alpha\in(1,2]$, and we first introduce some additional notation. For $\t\in \T$ and $r>0$, if $x\in\t_r$, let $\t[x]$ denote the subtree of descendants of $x$ in $\t$. To define it formally, we write $v_x$ for the unique element of $\v$ such that $x=(v_x,r)$, and define the shifted discrete tree $\Pi[v_{x}]=\{v \in \mathcal{V} \colon v_{x}v\in \Pi\}$. Then $\t[x]$ is the infinite continuous tree corresponding to the pair 
$$\Big(\Pi[v_{x}], (z_{v_xv}-r)_{v\in\Pi[v_{x}]}\Big).$$

For a fixed $r>0$, we know that $\Gamma\a$ has a.s.~no branching point at height $r$. As there is a unique point $x\in \Gamma\a_r$ such that $x\prec W_\infty$, we write $\Gamma\a \langle r\rangle = \Gamma\a[x]$ for the subtree above level~$r$ selected by the harmonic measure.

To describe the distribution of $\Gamma\a \langle r\rangle$, recall that for every $x\geq 0$,
$$\varphi_{\alpha}(x)=\E\big[\exp(-x\,\mathcal{C}\a/2)\big]= \Theta_{\alpha}\big(\exp(-x\,\cc(\t)/2)\big).$$

\begin{proposition}
\label{prop:tree-selected-law} 
The distribution under $\P \otimes P$ of the subtree $\Gamma^{(\alpha)}\langle r\rangle$ above level $r$ selected by the harmonic measure is
\begin{displaymath}
\Phi\a_r(c)\,\Theta_{\alpha}(\mathrm{d}\t),
\end{displaymath}
where, for every $c>0$, 
$$\Phi\a_r(c)\colonequals  E_{(c)}\Big[\exp-\int_0^r \mathrm{d}s\,\Big(m_{\alpha}\big(1-\varphi_{\alpha}(X_s)\big)^{\alpha-1}-\frac{1}{\alpha-1}\Big)\Big].$$
Here $X=(X_s)_{0\leq s\leq r}$ stands for the solution of the stochastic differential equation 
\begin{displaymath}
\mathrm{d}X_s = 2\sqrt{X_s} \,\mathrm{d}\eta_s + (2- X_s)\mathrm{d}s
\end{displaymath}
that starts under the probability measure $P_{(c)}$ with an exponential distribution of parameter $c/2$. In the previous SDE, $(\eta_s)_{s\geq 0}$ denotes a standard linear Brownian motion.
\end{proposition}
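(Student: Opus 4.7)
The strategy is to view $\Gamma^{(\alpha)}\langle r\rangle$ as a size-biased object, compute its Radon--Nikodym derivative with respect to $\Theta_{\alpha}$ through a spine decomposition, and identify the resulting density with a Feynman--Kac expectation along a diffusion.

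For any bounded measurable $F\colon\T\to\R_+$, start from the many-to-one identity
$$\E\otimes E\big[F(\Gamma^{(\alpha)}\langle r\rangle)\big] \;=\; \E\Big[\sum_{x\in\Gamma^{(\alpha)}_r} p(x)\,F(\Gamma^{(\alpha)}[x])\Big],$$
where $p(x)\colonequals P(x\prec W_\infty\mid \Gamma^{(\alpha)})$ is the quenched probability that the exit ray crosses $x$. Iterating the flow property of harmonic measure (Lemma~\ref{flow-property}) along the geodesic from the root to $x$ writes $p(x)$ as the product of $\cc(\Gamma^{(\alpha)}[x])$ with a factor depending only on the backbone up to $x$ and on the conductances of the off-spine subtrees within the tree truncated at height~$r$. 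By the branching property of the CTGW tree, those off-spine subtrees are independent $\Theta_{\alpha}$-CTGW trees, so integrating them out at each branching event produces factors of the Laplace transform $\varphi_{\alpha}$ of $\mathcal{C}^{(\alpha)}/2$, weighted by multiplicities distributed according to $\theta_{\alpha}$.

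The heart of the argument is to recast the resulting averaged expression as a Feynman--Kac functional of a Markov process $(X_s)_{0\leq s\leq r}$ that encodes, at spine-height~$s$, a suitable conductance summary of the tree. The exponential spacings between branching points together with the conductance size-biasing combine to yield the CIR-type SDE $\mathrm{d}X_s = 2\sqrt{X_s}\,\mathrm{d}\eta_s+(2-X_s)\,\mathrm{d}s$. Conditionally on $\cc(\Gamma^{(\alpha)}[x])=c$, the initial value $X_0$ is exponential with parameter $c/2$, which arises naturally through the identity $\varphi_{\alpha}(c)=\E[e^{-c\mathcal{C}^{(\alpha)}/2}]$. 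The integrand $m_{\alpha}(1-\varphi_{\alpha}(X_s))^{\alpha-1}-1/(\alpha-1)$ then collects, via the explicit generating function~\eqref{eq:gene-fct}, the accumulated Laplace factors from off-spine subtrees at each branching event, with the constant $-1/(\alpha-1)=-(m_{\alpha}-1)$ compensating for the deterministic growth $\E[\#\Gamma^{(\alpha)}_r]=e^{(m_{\alpha}-1)r}$ noted in Section~\ref{sec:ctgwtree}.

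The principal technical obstacle is the rigorous derivation of the SDE for $X_s$: one must show that, after averaging over the offspring distribution $\theta_{\alpha}$ at branch points and over the exponential spine spacings, the drift and diffusion coefficients really are $2-X_s$ and $2\sqrt{X_s}$. As a consistency check I would in parallel exploit the self-similarity $\Gamma^{(\alpha)}\langle r\rangle=\Gamma^{(\alpha)}\langle s\rangle\langle r-s\rangle$ for $0<s<r$ (immediate from the uniqueness of the crossing of $W_\infty$ at each height): this forces $\Phi_r^{(\alpha)}(c)$ to satisfy a semigroup evolution equation in~$r$, which should coincide with the Kolmogorov backward equation associated with the weighted CIR diffusion, subject to the initial condition $\Phi_0^{(\alpha)}\equiv 1$.
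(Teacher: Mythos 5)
Your proposal identifies the right overall structure --- a many-to-one / spine decomposition, with off-spine subtrees integrated out to produce the Laplace transform $\varphi_\alpha$ and the generating-function identity supplying the exponent $\alpha-1$ --- and your interpretation of the constant $-1/(\alpha-1)$ as the compensator for the growth $e^{(m_\alpha-1)r}$ is correct. However, there is a genuine gap exactly at the step you yourself flag as the ``principal technical obstacle'': you never say where the CIR-type diffusion $X$ comes from, and the route you suggest for computing $p(x)$ would not naturally produce it.

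Concretely, iterating the flow property along the spine gives $p(x)$ as a product $\prod_i \cc(\Gamma[x_i])/\sum_j\cc(\t_{i,j})$ involving the conductances of the \emph{infinite} subtrees at each spinal branching point; these do not separate into ``a factor depending only on the backbone up to $x$ and on conductances of off-spine subtrees within the truncated tree'', because each denominator still contains the conductance of the subtree that carries the spine onward past height $r$. The paper instead computes $p(x)$ by excursion theory: fix $x\in\Gamma_r$, truncate off-spine branches at height $R>r$, run $W^{x,R}$ reflected at those leaves, and observe that the total local time at $x$ is exponential with parameter $\cc(\Gamma[x])/2$, while the escape event has probability $\exp(-\tfrac12\sum_{i,j}\cc(\Gamma^{x,R}_{i,j})L^{a_i,R}_\infty)$ conditionally on the spinal local times. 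The diffusion $X$ then appears not after ``averaging the branching data'' but directly from a classical (second) Ray--Knight theorem: conditionally on the local time $\ell$ at $x$, the process of local times read down the spine $(L^{r-a,R}_\infty)_{0\le a\le r}$ solves $\mathrm{d}X_a = 2\sqrt{X_a}\,\mathrm{d}\eta_a+(2-X_a)\,\mathrm{d}a$ with $X_0=\ell$. The exponential initial law with parameter $c/2$ is exactly this local time at $x$, not a consequence of $\varphi_\alpha(c)=\E[e^{-c\mathcal{C}^{(\alpha)}/2}]$ as you suggest. After letting $R\to\infty$, the many-to-one lemma and the exponential formula for the Poisson point process of spinal subtrees yield the stated Feynman--Kac form. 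Your proposed semigroup/consistency check does not substitute for the Ray--Knight identification: knowing that $\Phi_r$ satisfies an abstract evolution equation does not pin down the generator, which is precisely the piece that is missing from your argument.
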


Now we define shifts $(\tau_r)_{r\geq 0}$ on $\T^*$ in the following way. For $r=0$, $\tau_0$ is the identity mapping of $\T^*$. For $r>0$ and $(\t,\mathbf{v})\in\T^*$, we write $\mathbf{v}=(v_1,v_2,\ldots)$ and $\mathbf{v}_n=(v_1,\ldots,v_n)$ for every $n\geq 0$ (by convention, $\mathbf{v}_{0}=\varnothing$). Also let $x_{r,\mathbf{v}}$ be the unique element of $\t_r$ such that $x_{r,\mathbf{v}}\prec \mathbf{v}$. Then we set
$$\tau_r(\t,\mathbf{v})= \Big(\t[x_{r,\mathbf{v}}]\,,\,(v_{k+1},v_{k+2},\ldots)\Big),$$
where $k=\min\{n\geq 0\colon z_{\mathbf{v}_n}\geq r\}$. Informally,  $\tau_r(\t,\mathbf{v})$ is obtained by taking the subtree of $\t$ consisting of descendants of the vertex at height $r$ on the distinguished geodesic ray, and keeping in this subtree the ``same'' geodesic ray. It is straightforward to verify that $\tau_r\circ \tau_s=\tau_{r+s}$ for every $r,s\geq 0$. 

The next proposition gives an invariant measure absolutely continuous with respect to $\Theta_{\alpha}^*$ under the shifts $\tau_{r}$. To simplify notation, we set first
\begin{equation*}
C_1(\alpha)\colonequals 2\int_0^\infty \mathrm{d}s\,\varphi_{\alpha}'(s)^2\,e^{s/2} = \int\!\!\int \gamma_{\alpha}(\mathrm{d}\ell)\gamma_{\alpha}(\mathrm{d}\ell') \frac{\ell \ell'}{\ell+\ell'-1}.
\end{equation*}

\begin{proposition}
\label{invariant-meas}
For every $c>0$,
$$\lim_{r\to +\infty} \Phi\a_r(c)= \Phi\a_\infty(c)\colonequals \frac{1}{C_1(\alpha)} \int \gamma_{\alpha}(\mathrm{d}s)\frac{cs}{c+s-1}. $$
The probability measure $\Lambda_{\alpha}^{*}$ on $\T^{*}$ defined as
$$\Lambda_{\alpha}^*(\mathrm{d}\t\mathrm{d}\mathbf{v})\colonequals \Phi\a_\infty(\cc(\t))\,\Theta_{\alpha}^*(\mathrm{d}\t \mathrm{d}\mathbf{v})$$
is invariant under the shifts $\tau_r$, $r\geq 0$. 
\end{proposition}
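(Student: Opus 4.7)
The plan is to reduce the explicit limit formula to an ergodic statement for a one-dimensional diffusion obtained via a Doob $h$-transform of the Feynman--Kac semigroup encoded by $\Phi^{(\alpha)}_r$, and then to deduce the invariance of $\Lambda_{\alpha}^*$ as a corollary of this convergence, combined with Proposition~\ref{prop:tree-selected-law} and the flow property of harmonic measure.

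For the limit, I first rewrite
\[
\Phi^{(\alpha)}_r(c) \;=\; \int_0^\infty \tfrac{c}{2}\, e^{-cy/2}\,\Psi_r(y)\,\mathrm{d}y,
\qquad
\Psi_r(y) \;:=\; E_y\!\left[\exp\!\Big(-\int_0^r V(X_s)\,\mathrm{d}s\Big)\right],
\]
with $V(x) := m_\alpha(1-\varphi_\alpha(x))^{\alpha-1} - 1/(\alpha-1)$ and $E_y$ corresponding to $X$ started deterministically at $y$. Differentiating the ODE~\eqref{eq:laplace} for $\varphi_\alpha$ and rearranging shows that the strictly positive function $h(x) := -e^{x/2}\varphi'_\alpha(x)$ is a ground state for the Feynman--Kac operator, that is $Lh = Vh$ with $Lg(x) = 2x\,g''(x) + (2-x)g'(x)$. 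The classical Doob $h$-transform then converts the Feynman--Kac semigroup into a Markov semigroup $T^h_r$ satisfying $\Psi_r = h \cdot T^h_r(1/h)$, and a standard one-dimensional stationary calculation identifies the invariant density of the transformed diffusion $X^h$ as $\pi^h(x) = \frac{2}{C_1(\alpha)}\,e^{x/2}\varphi'_\alpha(x)^2$, the normalization being forced by the very definition of $C_1(\alpha)$. Positive recurrence of $X^h$ then yields $T^h_r(1/h)(y)\to \int(1/h)\,\mathrm{d}\pi^h = 2/C_1(\alpha)$, where the last equality follows from the telescoping $\int_0^\infty (-\varphi'_\alpha(y))\,\mathrm{d}y = \varphi_\alpha(0) - \varphi_\alpha(\infty) = 1$. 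Substituting back and expanding $-\varphi'_\alpha(y) = \tfrac{1}{2}\int\gamma_\alpha(\mathrm{d}s)\,s\,e^{-sy/2}$ produces the announced closed form for $\Phi^{(\alpha)}_\infty(c)$. The main obstacle here is the ergodicity of $X^h$: the drift $2 - x + 4xh'/h$ depends on $\varphi_\alpha$ in an implicit way, and positive recurrence must be established using the asymptotics of $\varphi_\alpha$ near $0$ (where the diffusion coefficient degenerates) and near $+\infty$, which should reduce to standard criteria for one-dimensional Feller diffusions.

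The invariance of $\Lambda_{\alpha}^*$ under $\tau_r$ then follows almost for free. Proposition~\ref{prop:tree-selected-law}, combined with an extension of the flow property of harmonic measure to arbitrary heights (which itself follows from iterating Lemma~\ref{flow-property} at each branching point encountered below level $R$, or equivalently from the strong Markov property of Brownian motion on $\Gamma\a$ applied at the first passage time of height $R$), gives that the pushforward of $\Theta_{\alpha}^*$ under $\tau_R$ equals $\Phi^{(\alpha)}_R(\cc(\t'))\,\Theta_{\alpha}^*(\mathrm{d}\t'\mathrm{d}\mathbf{v}')$ for every $R\ge 0$. Applying this identity twice, together with the semigroup relation $\tau_{r+R} = \tau_r\circ\tau_R$, yields
\[
\int F\circ\tau_r(\t,\mathbf{v})\,\Phi^{(\alpha)}_R(\cc(\t))\,\Theta_{\alpha}^*(\mathrm{d}\t\mathrm{d}\mathbf{v})
\;=\;\int F(\t',\mathbf{v}')\,\Phi^{(\alpha)}_{r+R}(\cc(\t'))\,\Theta_{\alpha}^*(\mathrm{d}\t'\mathrm{d}\mathbf{v}')
\]
for every bounded measurable $F$ on $\T^*$. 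Since $\Phi^{(\alpha)}_R\cdot\Theta_\alpha$ and $\Phi^{(\alpha)}_\infty\cdot\Theta_\alpha$ are probability measures on $\T$, the pointwise convergence $\Phi^{(\alpha)}_R\to\Phi^{(\alpha)}_\infty$ upgrades automatically to $L^1(\Theta_\alpha)$-convergence via Scheff\'e's lemma; letting $R\to\infty$ on both sides then gives $\int F\circ\tau_r\,\mathrm{d}\Lambda_{\alpha}^* = \int F\,\mathrm{d}\Lambda_{\alpha}^*$, which is exactly the required invariance.
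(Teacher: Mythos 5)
Your proof of the limit formula is essentially the paper's. The paper phrases the Doob $h$-transform through the explicit exponential martingale of Lemma~\ref{martingalelemma} (whose martingale property is exactly your identity $Lh=Vh$ in disguise) and Girsanov's theorem, arriving at the same transformed diffusion with invariant density proportional to $\varphi_\alpha'(\ell)^2 e^{\ell/2}$, and then applies convergence to stationarity to the same test function $g(\ell)=-e^{-\ell/2}/\varphi_\alpha'(\ell)=1/h(\ell)$. The two technical points you flag as obstacles are indeed supplied there: positive recurrence of the transformed diffusion is obtained by comparison with the untransformed CIR-type process together with the entrance-point behaviour at $0$, and the interchange of $r\to\infty$ with the integral defining $\Phi^{(\alpha)}_r(c)$ rests on the uniform bound in the second assertion of Lemma~\ref{asympto-density}. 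For the invariance, the paper defers entirely to \cite[Proposition 12]{CLG13}, while your self-contained argument via the pushforward identity $(\tau_R)_*\Theta_{\alpha}^*=\Phi^{(\alpha)}_R(\cc(\cdot))\,\Theta_{\alpha}^*$, the semigroup relation $\tau_{r+R}=\tau_r\circ\tau_R$, and Scheff\'e's lemma is correct and clean. One small caution: the parenthetical ``equivalently, strong Markov property at the \emph{first} passage time of height $R$'' is not quite right, since Brownian motion may return below height $R$ after first reaching it; one should appeal to the \emph{last} passage time (or, as you primarily suggest, iterate Lemma~\ref{flow-property} over the successive branching points below height $R$), but this does not affect the validity of your main route.
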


Furthermore, one can easily adapt the proof of Proposition 13 in~\cite{CLG13} to show that for every $r>0$, the shift $\tau_r$ acting on the probability space $(\T^*, \Lambda_{\alpha}^*)$ is ergodic. Applying Birkhoff's ergodic theorem to a suitable functional (see Section 3.4 of~\cite{CLG13}) leads to the convergence~(\ref{eq:loc-dim-har}) in Theorem~\ref{thm:dim-stable}, with $\beta_{\alpha}$ given by formula~(\ref{eq:beta1}). See Section~\ref{sec:appendix} for more details.

\section{The discrete setting}
\renewcommand{\t}{\mathsf{T}}

\subsection{Galton-Watson trees}
\label{sec:tree-discrete}

Let us first introduce discrete (finite) rooted ordered trees, which are also called plane trees in combinatorics. A plane tree $\mathsf{t}$ is a finite subset of $\mathcal{V}$ such that the following holds:
\begin{enumerate}
\item[(1)] $\varnothing\in \mathsf{t}\,$.
\item[(2)] If $u=(u_1,\ldots,u_n)\in \mathsf{t}\backslash\{\varnothing\}$, then $\widehat u =(u_1,\ldots,u_{n-1})\in \mathsf{t}\,$.
\item[(3)] For every $u=(u_1,\ldots,u_n)\in \mathsf{t}$, there exists an integer $k_u(\mathsf{t})\geq 0$ such that, for every $j\in\N$,  $(u_1,\ldots,u_n,j)\in \mathsf{t}$ if and only if $1\leq j\leq k_u(\mathsf{t})$.
\end{enumerate}
In this section we will say tree instead of plane tree for short. The same notation and terminology introduced at the beginning of \cref{sec:treedelta} will be used in this section: $|u|$ is the generation of~$u$, $uv$ denotes the concatenation of $u$ and $v$, $\prec$ stands for the genealogical order and $u\wedge v$ is the maximal element of $\{w\in\mathcal{V}\colon w\prec u\hbox{ and }w\prec v\}$. A vertex with no child is called a leaf.

The height of a tree $\mathsf{t}$ is
$$h(\mathsf{t})\colonequals \max\{|v| \colon v\in \mathsf{t}\}.$$
We write $\mathscr{T}$ for the set of all trees, and $\mathscr{T}_n$ for the set of all trees with height $n$.

We view a tree $\mathsf{t}$ as a graph whose vertices are the elements of $\mathsf{t}$ and whose edges are the pairs $\{\widehat u,u\}$ for all $u\in  \mathsf{t}\backslash\{\varnothing\}$. The set $\mathsf{t}$ is equipped with the distance
$$ d(u,v) \colonequals \frac{1}{2}(|u|+|v| -2|u\wedge v|).$$
Notice that this is half the usual graph distance. We will write $B_{\mathsf{t}}(v,r)$, or simply $B(v,r)$ if there is no ambiguity, for the closed ball of radius $r$ centered at $v$, with respect to the distance $d$ in the tree $\mathsf{t}$. 

The set of all vertices of $\mathsf{t}$ at generation $n$ is denoted by
$$\mathsf{t}_n\colonequals \{v\in \mathsf{t} \colon |v|=n\}.$$
If $v\in \mathsf{t}$, the subtree of descendants of $v$ is 
$$\wt{\mathsf{t}} [v]\colonequals \{v'\in \mathsf{t}\colon v\prec v'\}.$$
Note that $\wt{\mathsf{t}}[v]$ is not a tree under the previous definition, but we can turn it into a tree by relabeling its vertices as
$$\mathsf{t}[v] \colonequals \{w\in \mathcal{V} \colon vw\in \mathsf{t}\}.$$

If $v\in \mathsf{t}$, then for every $i\in\{0,1,\ldots,|v|\}$ we write $\langle v\rangle_i$ for the ancestor of $v$ at generation~$i$. Suppose that $|v|=n$. Then $B_{\mathsf{t}}(v,i)\cap \mathsf{t}_n= \wt{\mathsf{t}}\,[\langle v\rangle_{n-i}]\cap \mathsf{t}_n$, for every $i\in\{0,1,\ldots,n\}$. This simple observation will be used repeatedly below.

Let $\rho$ be a non-trivial probability measure on $\Z_{+}$ with mean one, which belongs to the domain of attraction of a stable distribution of index $\alpha\in(1,2]$. Therefore property (\ref{eq:stable-attraction}) holds. For every integer $n\geq 0$, we let $\t^{(n)}$ be a Galton-Watson tree with offspring distribution $\rho$, conditioned on non-extinction at generation $n$, viewed as a random subset of $\mathcal V$ (see e.g.~\cite{LG05} for a precise definition of Galton-Watson trees).  In particular, $\t^{(0)}$ is just a Galton-Watson tree with offspring distribution $\rho$. We suppose that the random trees $\t^{(n)}$ are defined under the probability measure $\P$. 

We let $\t^{*n}$ be the reduced tree associated with $\t^{(n)}$, which consists of all vertices of $\t^{(n)}$ that have (at least) one descendant at generation $n$. Note that $|v|\leq n$ for every $v\in \t^{*n}$. A priori $\t^{*n}$ is not a tree in the sense of the preceding definition. However we can relabel the vertices of  $\t^{*n}$, preserving both the lexicographical order and the genealogical order, so that $\t^{*n}$ becomes a tree in the sense of our definitions. We will always assume that this relabeling has been done.

Conditionally on $\t^{(n)}$, the hitting distribution of generation $n$ is the same for simple random walk on $\t^{(n)}$ and that on the reduced tree $\t^{*n}$. In view of studying properties of this hitting distribution, we can consider directly a simple random walk on $\t^{*n}$ starting from the root $\varnothing$, which we denote by $Z^n=(Z^n_k)_{k\geq 0}$. This random walk is defined under the probability measure $P$. Let
$$H_n\colonequals \inf\{k\geq 0\colon |Z^n_k|=n\}$$
be the first hitting time of generation $n$ by $Z^n$, and set $\Sigma_n = Z^n_{H_n}$ to be the hitting point. The discrete harmonic measure $\mu_n$ is the law of $\Sigma_n$ under $P$, which is a (random) probability measure on the level set $\t^{*n}_{n}$.

Set $q_{n}=\P\big(h(\t^{(0)})\geq n\big)$. If $L$ is the slowly varying function appearing in (\ref{eq:stable-attraction}), it has been established in~\cite[Lemma~2]{S68} that
\begin{equation}
\label{eq:survivalpro1}
q_{n}^{\alpha-1}L(q_{n})\sim \frac{1}{(\alpha-1)n} \quad \mbox{ as } n \to \infty.
\end{equation}
By the asymptotic inversion property of slowly varying functions (see e.g.~\cite[Section 1.5.7]{BGT87}), it follows that
\begin{equation}
\label{eq:survivalpro}
q_{n}\sim n^{-\frac{1}{\alpha-1}}\ell(n) \quad \mbox{ as } n \to \infty,
\end{equation} 
for a function $\ell$ slowing varying at $\infty$. Moreover, it is shown in~\cite[Theorem 1]{S68} that, as $n\to \infty$, $q_{n}\#\t^{*n}_{n}$ converges in distribution to the positive random variable $\mathcal{W}(\Gamma\a)$ introduced in the proof of~\cref{prop:beta<}.

We will need to estimate the size of level sets in $\t^{*n}$. The following lemma is an analogue of Lemma 15 in~\cite{CLG13}. 

\begin{lemma}
\label{lem:level-size}
For every $r\geq 1$, there exists a constant $C=C(r,\rho)$ depending on $r$ and the offspring distribution $\rho$ such that, for every integer $n\geq 2$ and every integer $p\in[1,n/2]$,
$$\E\big[(\log \#\t^{*n}_{n-p})^{r}\big]^{\frac{1}{r}} \leq C\,\log\frac{n}{p} \qquad \mbox{and} \qquad  \E\big[(\log \#\t^{*n}_{n})^{r}\big]^{\frac{1}{r}} \leq C\,\log n.$$
\end{lemma}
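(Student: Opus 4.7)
The plan is to convert a polynomial tail bound on $\#\t^{*n}_{n-p}$ into a moment bound on its logarithm. The key structural observation is the following: by the branching property of Galton--Watson trees applied at generation $n-p$, conditionally on $\#\t^{(0)}_{n-p}=k$ the subtrees rooted at the $k$ level-$(n-p)$ vertices are i.i.d.~GW trees, each reaching height $p$ independently with probability $q_p$. Therefore, conditionally on $\#\t^{(0)}_{n-p}$, the variable $\#\t^{*n}_{n-p}$ is binomial with parameters $(\#\t^{(0)}_{n-p},q_p)$, and moreover $\{\#\t^{*n}_{n-p}>0\}=\{h(\t^{(0)})\geq n\}$, so that conditioning on non-extinction just introduces a factor $1/q_n$.

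Fix $\delta\in(0,\alpha-1)$, so that $1+\delta<\alpha\leq 2$. The Bahr--Esseen inequality applied to the centered binomial gives
$$\E\bigl[\mathrm{Bin}(k,q_p)^{1+\delta}\bigr]\;\leq\; C_{\delta}\bigl((kq_p)^{1+\delta}+(kq_p)^{(1+\delta)/2}\bigr).$$
For the unconditional level-size of the critical GW tree I would use the classical Yaglom-type moment estimate
$$\E\bigl[(\#\t^{(0)}_m)^{1+\delta}\bigr]\;\leq\; C\,q_m^{-\delta},\qquad m\geq 1,$$
which follows from the explicit Laplace transform of the martingale limit $\mathcal{W}(\Gamma^{(\alpha)})$ recalled in the proof of Proposition~\ref{prop:beta<} (whose moments of order $<\alpha$ are finite) together with uniform integrability of $(q_m\#\t^{(0)}_m)^{1+\delta}$ conditional on non-extinction. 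Combining the two estimates, using~(\ref{eq:survivalpro}) and the hypothesis $p\leq n/2$, a short computation yields
$$\E\bigl[(\#\t^{*n}_{n-p})^{1+\delta}\bigm| h(\t^{(0)})\geq n\bigr]\;\leq\; \widetilde{C}\,(n/p)^{(1+\delta)/(\alpha-1)},$$
up to slowly varying factors, which are negligible on a logarithmic scale.

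Markov's inequality then yields
$$\P\bigl(\log \#\t^{*n}_{n-p}\geq y\bigm| h(\t^{(0)})\geq n\bigr)\;\leq\; C\,(n/p)^{(1+\delta)/(\alpha-1)}e^{-(1+\delta)y}.$$
Choosing $y_{0}=\tfrac{1}{\alpha-1}\log(n/p)+O(\log\log(n/p))$ so that the right-hand side equals $1$, the right tail of $\log \#\t^{*n}_{n-p}$ past $y_0$ decays exponentially in $y-y_0$, uniformly in $n,p$. Integrating the tail in the standard way gives the first estimate $\E[(\log \#\t^{*n}_{n-p})^{r}]^{1/r}\leq C(r,\rho)\log(n/p)$. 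The second estimate is obtained by the same argument with $\#\t^{*n}_n=\#\t^{(n)}_n$ in place of $\#\t^{*n}_{n-p}$, since the analogous bound $\E[(\#\t^{(n)}_n)^{1+\delta}]\leq C\,n^{(1+\delta)/(\alpha-1)}$ is nothing but the conditional version of the Yaglom estimate above.

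The main obstacle I expect is the moment bound $\E[(\#\t^{(0)}_m)^{1+\delta}]\leq C\,q_m^{-\delta}$. When $\alpha=2$, the corresponding step in~\cite{CLG13} uses the second moment directly via the many-to-two formula; here $\E[\xi^2]=\infty$ forces $\E[(\#\t^{(0)}_m)^2]=\infty$, so one is compelled to work with fractional moments $1+\delta<\alpha$ throughout. The required uniform-in-$m$ estimate does not follow from Slack's distributional convergence alone; establishing it, either by a direct generating-function iteration exploiting the expansion~(\ref{eq:stable-attraction}) or by a careful uniform-integrability argument for $(q_m \#\t^{(0)}_m)^{1+\delta}$, is the technical heart of the lemma and is precisely the reason why the author only obtains a moment estimate of order strictly smaller than $\alpha$ (as signalled in the introduction).
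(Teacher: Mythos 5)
Your route would work, but it is substantially more involved than the paper's, and you have misidentified where the difficulty lies. The paper's proof uses only the \emph{first} moment: one picks $a=a(r)>0$ so that $x\mapsto(\log(a+x))^r$ is concave on $[1,\infty)$, applies Jensen's inequality to get
$\E\big[(\log \#\t^{*n}_{n-p})^{r}\big]^{1/r}\leq \log\big(a+\E[\#\t^{*n}_{n-p}]\big)=\log\big(a+q_p/q_n\big)$,
where the computation $\E[\#\t^{*n}_{n-p}]=q_p/q_n$ is an immediate consequence of the branching property and criticality ($\E[\#\t^{(0)}_{n-p}]=1$). Potter's bounds on the slowly varying factor in~(\ref{eq:survivalpro}) then give $\log(q_p/q_n)\leq C'\log(n/p)$ for $p\in[1,n/2]$, and the lemma follows. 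So the fractional-moment bound $\E[(\#\t^{(0)}_m)^{1+\delta}]\leq Cq_m^{-\delta}$ that you flag as ``the technical heart of the lemma'' is in fact not used here at all; it is Lemma~11 of~\cite{FVW07} and the paper does rely on it, but for the conductance estimate in Lemma~\ref{moment-conductance}, not for this lemma. Your approach (binomial decomposition, Bahr--Esseen, Markov, tail integration) is valid modulo details --- note the Bahr--Esseen error term for $\mathrm{Bin}(k,q_p)$ should be of order $kq_p$, not $(kq_p)^{(1+\delta)/2}$, though the difference is harmless in the relevant regime --- but it trades a one-line Jensen argument for a multi-step estimate that smuggles in machinery only needed elsewhere. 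The broader lesson is that for a logarithmic moment bound, concavity plus a first-moment computation is often enough, and the paper exploits exactly that.
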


\begin{proof}
We can find $a=a(r)>0$ such that the function $x\mapsto (\log(a+x))^r$ is concave over $[1,\infty)$. Then as in the proof of~\cite[Lemma 15]{CLG13}, 
\begin{displaymath}
\E\big[(\log \#\t^{*n}_{n-p})^{r}\big]^{\frac{1}{r}}\leq \E\big[(\log (a+\#\t^{*n}_{n-p}))^{r}\big]^{\frac{1}{r}} \leq \log \big(a+\E[\#\t^{*n}_{n-p}] \big) =\log (a+\frac{q_p}{q_n}).
\end{displaymath}
Using Potter's bounds on slowly varying function (see e.g.~\cite[Theorem 1.5.6]{BGT87}), one can deduce from~(\ref{eq:survivalpro}) that there exists a constant $C'=C'(\rho)>0$ such that for every $n\geq 2 $ and every $p\in [1, n/2]$, 
\begin{displaymath}
\log\Big(\frac{q_{p}}{q_{n}}\Big)\leq C'\log\Big(\frac{n}{p}\Big),
\end{displaymath} 
from which the first bound of the lemma easily follows. The second estimate can be shown in a similar way.
\end{proof}

\smallskip

The goal of this section is to prove Theorem~\ref{thm:dim-discrete}. We will assume in the rest of this section that the critical offspring distribution $\rho$ satisfies~(\ref{eq:stable-attraction}) with a fixed $\alpha \in(1,2]$. Accordingly, we will omit the superscripts and subscripts concerning $\alpha$ if there is no ambiguity. 

\subsection{Convergence of discrete reduced trees}
\label{sec:conv-reduced-tree}
We first define truncations of the discrete reduced tree $\t^{*n}$. For every $s\in[0,n]$, we set
$$R_s(\t^{*n})\colonequals \big\{v\in \t^{*n}\colon |v|\leq n-\lfloor s\rfloor\big\}.$$

Recall from \cref{sec:treedelta} the definition of the continuous reduced tree ${\Delta}$ of index $\alpha$. For every $\ve\in(0,1)$, we have set $\Delta_\ve=\{x\in\Delta\colon H(x)\leq 1-\ve\}$. We will implicitly use the fact that, for every fixed $\ve$, there is a.s.~no branching point of $\Delta$ at height $1-\ve$. The skeleton of $\Delta_\ve$ is defined as the following plane tree
$$\hbox{Sk}(\Delta_\ve)\colonequals \{\varnothing\}\cup\big\{v\in\Pi\backslash\{\varnothing\}\colon Y_{\hat v}\leq 1-\ve\big\}= \{\varnothing\}\cup \big\{v\in\Pi\backslash\{\varnothing\}\colon (\widehat v,Y_{\hat v})\in \Delta_\ve\big\}.$$ 
A vertex $v$ of $\hbox{Sk}(\Delta_\ve)$ is a leaf of $\hbox{Sk}(\Delta_\ve)$ if and only if $Y_{v}>1-\varepsilon$.

Let $\mathsf{t}$ be a tree. We write $\mathcal{S}(\mathsf{t})$ for the set of all vertices of $\mathsf{t}$ whose number of children is different from 1. Then we can find a unique tree $[\mathsf{t}]\in \mathscr{T}$ such that there exists a bijection from $[\mathsf{t}]$ onto $\mathcal{S}(\mathsf{t})$ that preserves the genealogical order and the lexicographical order of vertices. Denote the inverse of this canonical bijection by $u\in \mathcal{S}(\mathsf{t}) \mapsto [u]\in [\mathsf{t}]$. In a less formal way, $[\mathsf{t}]$ is just the tree obtained from $\mathsf{t}$ by removing all vertices that have exactly one child. 

\begin{proposition}
\label{conv-reduced-tree}
We can construct the reduced trees $\t^{*n}$ and the (continuous) reduced stable tree $\Delta$ on the same probability space $(\Omega,\mathcal{F},\P)$, so that the following assertions hold for every fixed $\ve\in(0,1)$ with $\P$-probability one. 
\begin{enumerate}
\item[\rm(1)] For every sufficiently large integer $n$, there exists an injective mapping $\Psi_{n}^{\ve}\colon u \mapsto w^{n,\ve}_{u}$ from $\mathrm{Sk}(\Delta_{\varepsilon})$ into $\mathcal{S}(R_{\varepsilon n}( \t^{*n}))$ satisfying the following properties.

\begin{enumerate}
\item[\rm(1.a)] The mapping $\Psi_{n}^{\ve}$ preserves both the lexicographical order and the genealogical order.
\item[\rm(1.b)] If $u$ is a leaf of $\mathrm{Sk}(\Delta_{ \varepsilon})$, $[w^{n,\ve}_{u}]$ is a leaf of $[R_{\ve n}(\t^{*n})]$ and $|w^{n,\ve}_{u}|=n-\lfloor \ve n \rfloor$. The restricted mapping 
$$\Psi_{n}^{\ve}\restriction_{\sf Leaves}\,\colon \mbox{Leaves of }\mathrm{Sk}(\Delta_{ \varepsilon}) \longrightarrow \big\{v \in \mathcal{S}(R_{\varepsilon n}( \t^{*n}))\colon [v] \mbox{ is a leaf of }[R_{\ve n}(\t^{*n})]\big\}$$
is bijective.
\item[\rm(1.c)] For every vertex $u$ of $\mathrm{Sk}(\Delta_{ \varepsilon})$, 
\begin{eqnarray*}
\lim_{n \to \infty}\frac{1}{n} |w^{n,\ve}_u|&= &Y_{u} \wedge (1-\ve)\,,  \\
\lim_{n \to \infty}\frac{1}{n} |\overline w^{n,\ve}_u| &=& Y_{\hat u}\,,
\end{eqnarray*}
where $\widehat u$ denotes the parent of $u$ in $\mathrm{Sk}(\Delta_{\varepsilon})$, and $\overline w^{n,\ve}_u$ stands for the vertex in $\mathcal{S}(R_{\varepsilon n}( \t^{*n}))$ such that $[\overline w^{n,\ve}_u]$ is the parent of $[w^{n,\ve}_u]$ in $[R_{\varepsilon n}( \t^{*n})]$. (Notice that $\overline w^{n,\ve}_u$ does not necessarily coincide with $w^{n,\ve}_{\hat u}$.)
\end{enumerate}

\item[\rm(2)] The mapping $\Psi_{n}^{\ve}$ is asymptotically unique in the sense that, if $\widetilde \Psi_{n}^{\ve}$ is another mapping such that the preceding properties hold, then for $n$ sufficiently large,
\begin{displaymath}
\Psi_{n}^{\ve}(u)=\widetilde \Psi_{n}^{\ve}(u) \quad \mbox{ for every } u\in \mathrm{Sk}(\Delta_{\varepsilon}).
\end{displaymath}
\end{enumerate}
\end{proposition}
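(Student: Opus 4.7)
The plan is to deduce the proposition from the scaling limit for reduced Galton--Watson trees established in \cite{V77, Y80}, combined with a Skorokhod representation argument and an explicit matching of branching points. The first and most substantive step is to establish the following joint convergence in distribution for each fixed $\ve\in(0,1)$: the combinatorial tree $[R_{\ve n}(\t^{*n})]$, together with the offspring numbers at its vertices and the rescaled heights $(n^{-1}|v|)_{v\in\mathcal{S}(R_{\ve n}(\t^{*n}))}$, converges to the marked skeleton $\mathrm{Sk}(\Delta_\ve)$ with the heights $(Y_u)_{u\in \mathrm{Sk}(\Delta_\ve)}$. The cleanest way to establish this is inductively, following the recursive construction of $\Delta^{(\alpha)}$ given in \cref{sec:treedelta}: first, the height of the first branching point of $\t^{*n}$ divided by $n$ converges to $U_\varnothing$; second, the number of children of this branching point having descendants at generation $n$ converges to $K_\varnothing\sim\theta_\alpha$; and third, conditionally on these data, the subtrees above the first branching point are asymptotically independent copies of the same picture at reduced scale $1-U_\varnothing$. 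The central input is the weak convergence $q_n\#\t^{*n}_n\to \mathcal{W}(\Gamma^{(\alpha)})$ from \cite{S68} (already used in the proof of \cref{prop:beta<}); the inductive procedure terminates after finitely many steps since $\mathrm{Sk}(\Delta_\ve)$ is a.s.~a finite tree.

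Second, once this convergence in distribution is at hand, I invoke Skorokhod's representation theorem to realize it almost surely on a common probability space $(\Omega,\mathcal{F},\P)$. On this event, with $\P$-probability one the following holds: no branching point of $\Delta$ lies at height exactly $1-\ve$, distinct branching points of $\Delta_\ve$ have distinct heights, and for $n$ large enough the combinatorial trees $[R_{\ve n}(\t^{*n})]$ and $\mathrm{Sk}(\Delta_\ve)$ have the same shape, via a unique bijection preserving the genealogical and the lexicographical orders. I then define $\Psi_n^\ve$ to be the unique order-preserving injection from $\mathrm{Sk}(\Delta_\ve)$ into $\mathcal{S}(R_{\ve n}(\t^{*n}))$ compatible with this bijection (sending a vertex $u$ to the corresponding branching point of $R_{\ve n}(\t^{*n})$). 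Properties (1.a) and (1.c) then follow from the construction and from the convergence of heights; property (1.b) uses that leaves of $\mathrm{Sk}(\Delta_\ve)$ are precisely the vertices $v\in\Pi\backslash\{\varnothing\}$ with $Y_{\widehat v}\leq 1-\ve<Y_v$, which by the coupling correspond bijectively to the vertices of $\mathcal{S}(R_{\ve n}(\t^{*n}))$ whose associated vertex in $[R_{\ve n}(\t^{*n})]$ is a leaf (necessarily at generation $n-\lfloor \ve n\rfloor$). The asymptotic uniqueness asserted in part (2) is then automatic from the uniqueness of the order-preserving bijection.

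The main obstacle lies in the first step, specifically in the case $1<\alpha<2$. The offspring distribution $\theta_\alpha$ has a heavy tail, so a branching point of $\Delta^{(\alpha)}$ can have many children; correspondingly, one must carefully control how the number of surviving children of a given vertex of $\t^{*n}$ stabilizes under the $1/n$ rescaling. Unlike the binary case $\alpha=2$ treated in \cite{CLG13}, where it suffices to locate the first branching point and iterate, here one must verify simultaneously that (i) only finitely many siblings contribute to surviving subtrees of macroscopic size, (ii) the joint distribution of their sizes converges to that of $K_\varnothing$ independent copies of $\mathcal{W}(\Gamma^{(\alpha)})$, and (iii) these macroscopic subtrees are asymptotically independent. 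A further subtlety is that the edges of the reduced skeleton $[R_{\ve n}(\t^{*n})]$ come from chains of degree-$1$ vertices in $\t^{*n}$ of varying lengths, and one must show that these chain lengths (not merely the heights of the branching points) rescale correctly to the uniform edge lengths of $\mathrm{Sk}(\Delta_\ve)$. Both points can be handled using the Markov branching property of $\t^{*n}$ at its first branching point together with the asymptotics~\eqref{eq:survivalpro} and the weak convergence of $q_n\#\t^{*n}_n$, applied iteratively to the surviving sub-populations.
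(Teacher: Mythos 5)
Your approach is sound but takes a genuinely different route from the paper's. The paper deduces Proposition~\ref{conv-reduced-tree} by invoking the convergence of the rescaled contour (height) functions of $\mathsf{T}^{(n)}$ towards the excursion of the stable height process of height greater than $1$, established in Duquesne--Le Gall~\cite[Sections 2.5--2.7]{DLG02}; Skorokhod's theorem is then applied at the level of these functional encodings, and the various coupling assertions are read off from the functional convergence. You instead propose to work directly with the reduced trees themselves, proving the skeleton convergence inductively via the Markov branching property, the survival asymptotics~\eqref{eq:survivalpro}, and Slack's limit $q_n\#\mathsf{T}^{*n}_n\to\mathcal{W}(\Gamma^{(\alpha)})$. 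This is legitimate and more self-contained, but you slightly understate the analytic input: Slack's theorem governs the total population, while the fact that the number of surviving children at the first branching point has limit law $\theta_\alpha$ (rather than, say, $\delta_2$) comes from a generating-function computation using the stable-attraction hypothesis~\eqref{eq:stable-attraction} — namely that, with $q=q_{n-D_n}\to 0$, the generating function of the number of surviving offspring, conditioned to be $\geq 2$, converges to $((1-s)^\alpha-1+\alpha s)/(\alpha-1)$. This step is exactly what distinguishes $\alpha<2$ from the binary case and must be carried out explicitly; it is not a corollary of~\cite{S68}. Similarly, the uniform rescaling of edge lengths requires the identity $1-f_p(1-q_{n-p})=q_n$ for the iterated generating function together with~\eqref{eq:survivalpro1}. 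With these computations supplied, your recursive argument would yield the same conclusions, trading the black-box strength of~\cite{DLG02} for a more elementary but more computation-heavy derivation.
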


\begin{figure}[!h]
\begin{center}
\includegraphics[width=17cm]{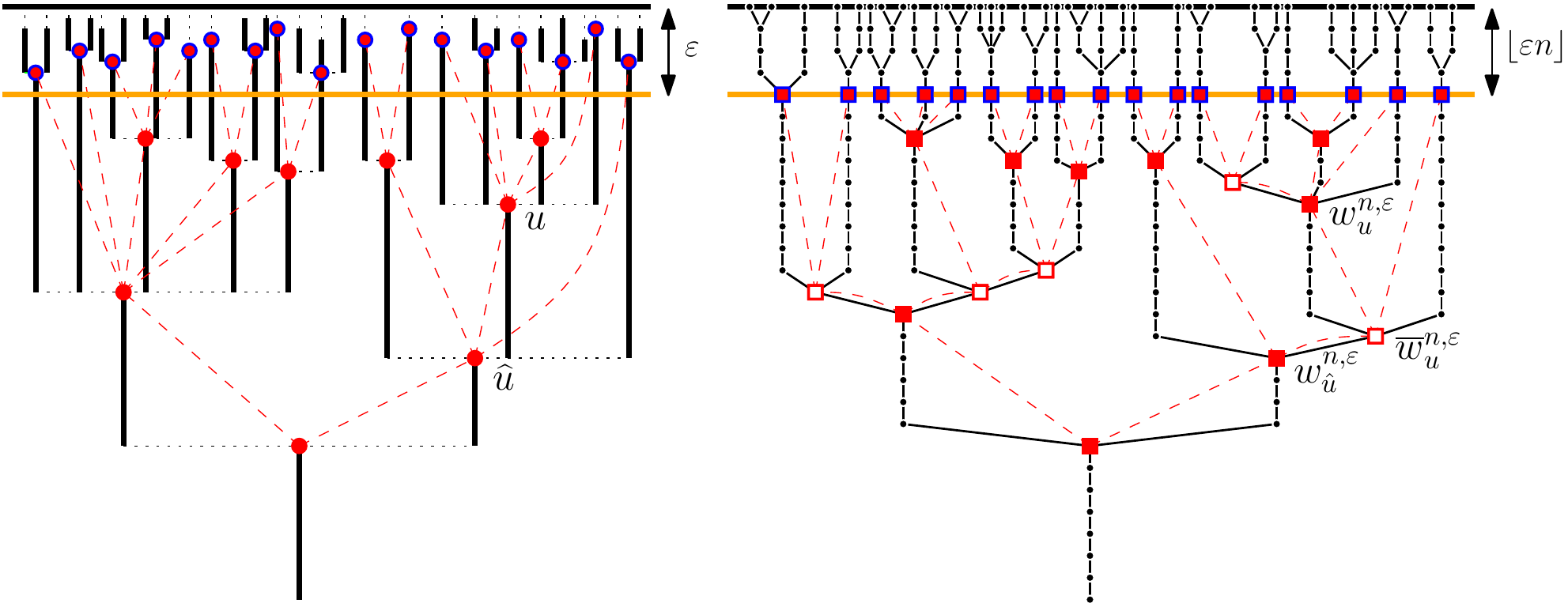}
\caption{On the left, the tree $\Delta$, its truncation $\Delta_{ \varepsilon}$ and its skeleton $\mathrm{Sk}( \Delta_{ \varepsilon})$. On the right, a large reduced tree $\t^{*n}$ of height $n$, its truncation $R_{\varepsilon n}(\t^{*n})$ and the associated tree $[R_{\varepsilon n}(\t^{*n})]$. The vertices depicted as filled red disks on the left correspond to the vertices depicted as filled red squares on the right, via the mapping $\Psi_{n}^{\ve}$.}
\label{fig:conv-reduced}
\end{center}
\end{figure}

\cref{conv-reduced-tree} (see~\cref{fig:conv-reduced} for an illustration)  essentially results from the convergence in distribution of the rescaled contour functions associated with the trees $\t^{(n)}$ towards the excursion of the stable height process with height greater than $1$ (see~\cite[Section 2.5]{DLG02}). By using the Skorokhod representation theorem, one may assume that the trees $\t^{(n)}$ and the excursion of the stable height process are constructed so that the latter convergence holds almost surely. The various assertions of \cref{conv-reduced-tree} then easily follow (cf.~\cite[Section 2.6]{DLG02}), using the relation between the excursion of the stable height process with height greater than $1$ and the limiting reduced tree $\Delta$, which can be found in~\cite[Section 2.7]{DLG02}.

\smallskip
\noindent \textbf{Remark 1.} Let us take $0<\delta<\ve$. If $u$ is not a leaf of $\hbox{Sk}(\Delta_\ve)$, we must have $w^{n,\ve}_u=w^{n,\delta}_u$ for sufficiently large $n$. On the other hand, if $u$ is a leaf of $\hbox{Sk}(\Delta_\ve)$, then for large $n$, $w^{n,\ve}_u$ must be an ancestor of $w^{n,\delta}_u$. 

\smallskip
\noindent \textbf{Remark 2.} We expect that a result more precise than~\cref{conv-reduced-tree} should hold. For all sufficiently large $n$, the mapping $\Psi_{n}^{\ve}$ should be a bijection, and the equality $\overline w^{n,\ve}_u=w^{n,\ve}_{\hat u}$ should hold for all $u\in \hbox{Sk}(\Delta_\ve)$ (in other words, there should be no white square in the right part of~\cref{fig:conv-reduced}). However this refinement does not easily follow from the results of~\cite{DLG02}, and we will omit it since it is not needed for our purposes. 

\subsection{Convergence of harmonic measures}
\label{sec:conv-harmonic}

Recall that $\mu$ is the continuous harmonic measure on the boundary $\partial \Delta$ of the reduced stable tree, and that $\mu_{n}$ is the discrete harmonic measure on $\t^{*n}_{n}$. For every $x\in\partial \Delta_\ve$, we set
$$\mu^\ve(x)=\mu(\{y\in\partial\Delta \colon x\prec y\})=P(x \prec B_{T_{-}}).$$
Similarly, we define a probability measure $\mu^\ve_n$ on $\t^{*n}_{n-\lfloor \ve n\rfloor}$ by setting
$$\mu^\ve_n(u)= \mu_n(\{ v \in \t_{n}^{*n} \colon u \prec v\}),$$
for every $u\in\t^{*n}_{n-\lfloor \ve n\rfloor}$. Clearly, $\mu_n^\ve$ is the distribution of $\langle \Sigma_n \rangle_{n-\lfloor \ve n\rfloor}$. 

\begin{proposition}
\label{convergence-harmonic}
Suppose that the reduced trees $\t^{*n}$ and the (continuous) tree $\Delta$ have been constructed so that the properties of \cref{conv-reduced-tree} hold, and recall the notation $(w^{n,\ve}_u)_{u\in{\rm Sk}(\Delta_\ve)}$ introduced therein. Then $\P$-a.s.~for every $x=(u,1-\ve)\in\partial \Delta_\ve$, 
$$\lim_{n\to\infty} \mu^\ve_n(w^{n,\ve}_u) = \mu^\ve(x).$$
\end{proposition}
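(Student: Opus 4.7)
My plan is to express both $\mu^\ve(x)$ and $\mu^\ve_n(w^{n,\ve}_u)$ as finite products of ``branching probabilities'' along the path to the distinguished vertex using the flow property of harmonic measure, and then to match the factors term by term in the limit. Let $a_0 = \varnothing \prec a_1 \prec \cdots \prec a_m = u$ be the path from the root to $u$ in $\mathrm{Sk}(\Delta_\ve)$, and for $0 \leq j < m$ and $l \in \{1,\ldots,k_{a_j}(\Pi)\}$ let $\Delta_{j,l}$ denote the subtree of $\Delta$ rooted at the branching point $(a_j, Y_{a_j})$ and extending upward via the $l$-th child of $a_j$. Iterating Lemma~\ref{flow-property} together with the independence of these subtrees at each branching point, one obtains
\[
\mu^\ve(x) \;=\; \prod_{j=0}^{m-1} \frac{\mathcal{C}(\Delta_{j,l_j^*})}{\sum_{l=1}^{k_{a_j}(\Pi)} \mathcal{C}(\Delta_{j,l})},
\]
where $l_j^*$ is the index such that $a_{j+1}=a_j l_j^*$.

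The standard electric-network identity for simple random walk provides a discrete analog: at each vertex $v$ of $\t^{*n}$ with children $c_1,\ldots,c_k$, conditionally on the walk reaching $\t^{*n}_n$ through $v$'s subtree, the probability it does so via $c_l$'s subtree is proportional to $\mathcal{C}^n_{c_l}/(1+\mathcal{C}^n_{c_l})$, where $\mathcal{C}^n_{c_l}$ is the effective conductance from $c_l$ to $\t^{*n}_n$ in $\t^{*n}[c_l]$ with unit resistance per edge. Vertices on the path with a single child yield trivial factors equal to~$1$, so only branching points of $\t^{*n}$ contribute, and by Proposition~\ref{conv-reduced-tree} these match for large $n$ with $a_0,\ldots,a_{m-1}$ via $\Psi_n^\ve$. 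Decomposing $\mathcal{C}^n_{c_l} = (L^n_l + 1/\mathcal{C}^{n,\mathrm{up}}_l)^{-1}$, where $L^n_l$ is the chain length from $c_l$ to the next discrete branching point and $\mathcal{C}^{n,\mathrm{up}}_l$ is the conductance of the subtree above it, property~(1.c) of Proposition~\ref{conv-reduced-tree} delivers $L^n_l/n \to Y_{a_j l} - Y_{a_j}$, so the proof reduces to showing that the suitably rescaled conductances $\mathcal{C}^{n,\mathrm{up}}_l$ converge in $\P$-probability to $\mathcal{C}(\Delta_{j,l})$ under the coupling.

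The main obstacle is this convergence of conductances under the joint coupling. I would establish it by backward induction along the (finitely many) branching points of $\mathrm{Sk}(\Delta_\ve)$, starting from subtrees near the leaves. The base case uses that the rescaled discrete conductance of $\t^{*m}$ converges in law to $\gamma_\alpha$ as $m \to \infty$: this follows from tightness (via the moment bound of Lemma~\ref{moment-conductance}) combined with the uniqueness of the fixed point of $\Phi_\alpha$ in Proposition~\ref{prop:unique}, since any subsequential weak limit of the laws of these discrete conductances must satisfy the recursive distributional equation~\eqref{eq:rde}. The inductive step combines the branching-property independence of the subtrees above distinct branching points of $\t^{*n}$ with the almost-sure convergence of tree structure from Proposition~\ref{conv-reduced-tree} to promote the distributional convergence into joint convergence in $\P$-probability under the coupling. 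Once this is propagated down to the root, the continuity of the product formula in its finitely many arguments yields $\mu^\ve_n(w^{n,\ve}_u) \to \mu^\ve(x)$, and the simultaneous convergence for all $x \in \partial \Delta_\ve$ follows since $\partial \Delta_\ve$ is $\P$-a.s.~finite.
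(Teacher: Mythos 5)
Your product formula for $\mu^\ve(x)$ via iterated applications of the flow property is fine, and the reduction to convergence of the subtree conductances $\mathcal{C}^{n,\mathrm{up}}_l$ is a natural first instinct. But the central step of your plan has a genuine gap that I do not see how to repair: you want to show that, \emph{under the coupling of Proposition~\ref{conv-reduced-tree}}, the rescaled conductance of the subtree of $\t^{*n}$ above $w^{n,\ve}_u$ converges in $\P$-probability to $\mathcal{C}(\Delta[x])$. The coupling, however, only matches the truncated trees $R_{\ve n}(\t^{*n})$ and $\Delta_\ve$ (for any fixed $\ve$). Nothing is said about how $\t^{*n}$ above generation $n-\lfloor\ve n\rfloor$ relates to $\Delta$ above height $1-\ve$. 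For a leaf $u$ of $\mathrm{Sk}(\Delta_\ve)$, the subtree $\wt\t^{*n}[w^{n,\ve}_u]$ lies entirely in the uncoupled region, so its rescaled conductance is a sequence of random variables that may converge in distribution to $\gamma_\alpha$, but there is no reason at all for it to converge in $\P$-probability, let alone to the specific target $\mathcal{C}(\Delta[x])$: the two are effectively independent given the coupled data. Your ``base case'' is therefore false as stated, and the backward induction never gets started. (As a secondary issue, the subsequential-limit-plus-RDE-uniqueness argument you sketch for distributional convergence is also not immediate, because the discrete conductances satisfy a discrete recursion, not the continuous equation~\eqref{eq:rde}; bridging the two requires additional work.)

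The paper circumvents this difficulty with a crucial extra step that your proposal is missing: a two-level truncation. One first introduces, for $\delta\in(0,\ve)$, the hitting measures $\mu^{\ve,(\delta)}$ and $\mu^{\ve,(\delta)}_n$ obtained by stopping Brownian motion at height $1-\delta$ (resp.\ the walk at generation $n-\lfloor\delta n\rfloor$), and proves that $\sup_x|\mu^{\ve,(\delta)}(x)-\mu^\ve(x)|\to 0$ and $\limsup_n\sup_v|\mu^{\ve,(\delta)}_n(v)-\mu^\ve_n(v)|\to 0$ as $\delta\to 0$. This uniform estimate says, roughly, that once the walk reaches the $\delta$-level it rarely falls back below the $\ve$-level, so the uncoupled top part of the tree is asymptotically irrelevant \emph{uniformly in the tree}. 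The problem is then reduced to proving $\mu^{\ve,(\delta)}_n(w^{n,\ve}_u)\to\mu^{\ve,(\delta)}(x)$ for each fixed $\delta$, and these quantities are entirely determined by the $\delta$-truncated trees, which \emph{are} coupled. At that point the walk and the Brownian motion, observed at the (finitely many) branching points of $\mathrm{Sk}(\Delta_\delta)$ and of $\mathcal{S}(R_{\delta n}(\t^{*n}))$, become finite-state absorbing Markov chains whose transition kernels are explicit series-parallel expressions in the heights $Y_u$ and $|v|$; property~(1.c) of Proposition~\ref{conv-reduced-tree} gives a.s.\ convergence of these kernels, hence of the absorption distributions, and one concludes. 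In short: one does not need (and cannot get) a.s.\ convergence of the conductances of the uncoupled subtrees; one needs instead the uniform-in-$\delta$ approximation to make those subtrees negligible.
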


\begin{proof}
Let $\delta\in(0,\ve)$ and set $T_\delta=\inf\{t\geq 0: H(B_t)=1-\delta\} <T$. Define a probability measure $\mu^{\ve,(\delta)}$ on $\partial\Delta_\ve$ by setting for every $x\in \partial\Delta_\ve$,
$$\mu^{\ve,(\delta)}(x)= P(x\prec B_{T_\delta}).$$
Similarly, we write $\mu^{(\delta)}_n$ for the distribution of the hitting point of generation $n-\lfloor \delta n\rfloor$ by random walk on $\t^{*n}$ started from $\varnothing$. Then we define a probability measure $\mu^{\ve,(\delta)}_n$ on $\t^{*n}_{n-\lfloor \ve n\rfloor}$ by setting
$$\mu^{\ve,(\delta)}_n (v)=\mu_n^{(\delta)} (\{w\in \t^{*n}_{n-\lfloor \delta n\rfloor}\colon v\prec w\}),$$
for every $v\in \t^{*n}_{n-\lfloor \ve n\rfloor}$.

As in the proof of~\cite[Proposition 18]{CLG13}, we have $\P$-a.s.
\begin{eqnarray*}
\lim_{\delta\to 0} \Big(\sup_{x \in \partial \Delta_{\ve}} \big|\mu^{\ve,(\delta)}(x)- \mu^\ve(x)\big|\Big) &= &0, \\
\lim_{\delta\to 0}\bigg(\limsup_{n\to\infty} \Big(\sup_{v\in \t^{*n}_{n-\lfloor \ve n\rfloor}} \big|\mu^{\ve,(\delta)}_n(v)- \mu^\ve_n(v)\big|\Big)\bigg)  &= & 0.
\end{eqnarray*}
So the convergence of the proposition will follow if we can verify that for every fixed $\delta\in(0,\ve)$, we have $\P$-a.s.~for every $x=(u,1-\ve)\in \partial \Delta_\ve$,
\begin{equation}
\label{convharmo1}
\lim_{n\to\infty} \mu^{\ve,(\delta)}_n(w^{n,\ve}_u) = \mu^{\ve,(\delta)}(x).
\end{equation}
To this end, we may and will assume that the reduced trees $\t^{*n}$ and the (continuous) tree $\Delta$ have been constructed so that the properties of \cref{conv-reduced-tree} hold simultaneously for $\ve$ and for $\delta$.

Firstly, by considering the successive passage times of Brownian motion stopped at time $T_\delta$ in the set $\{(u,Y_u\wedge (1-\delta))\colon u \in {\rm Sk}(\Delta_\delta)\}$, we get a Markov chain $X^{(\delta)}$, which is absorbed in the set $\{(v,1-\delta)\colon v\hbox{ is a leaf of }{\rm Sk}(\Delta_\delta)\}$, and whose transition kernels are explicitly described in terms of the quantities $Y_u, u\in{\rm Sk}(\Delta_\delta)$ by series and parallel circuits calculation. 

Secondly, let $n$ be sufficiently large so that assertions (1) and (2) of~\cref{conv-reduced-tree} hold with $\ve$ as well as with $\delta$, and consider simple random walk on $\t^{*n}$ started from $\varnothing$ and stopped at the first hitting time of generation $n-\lfloor \delta n\rfloor$. By considering the successive passage times of this random walk in the set $\mathcal{S}(R_{\delta n}(\t^{*n}))$, we again get a Markov chain $X^{(\delta),n}$, which is absorbed in the set 
$$\big\{v \in \mathcal{S}(R_{\delta n}( \t^{*n}))\colon [v] \mbox{ is a leaf of }[R_{\delta n}(\t^{*n})]\big\}. $$
By property (1.b) of~\cref{conv-reduced-tree}, this set is exactly $\{w^{n,\delta}_v \colon v\hbox{ is a leaf of }{\rm Sk}(\Delta_\delta)\}$. As previously, the transition kernels of this Markov chain $X^{(\delta),n}$ can be written explicitly in terms of the quantities $|v|, v\in \mathcal{S}(R_{\delta n}(\t^{*n}))$. 

Recall that by~\cref{conv-reduced-tree},
$$\Psi^{\delta}_{n}({\rm Sk}(\Delta_\delta))=\{w_{u}^{n,\delta}\colon u\in {\rm Sk}(\Delta_\delta)\}$$
is a subset of $\mathcal{S}(R_{\delta n}(\t^{*n}))$, and that the mapping $\Psi^{\delta}_{n}$ is injective. If we let $\widetilde X^{(\delta),n}$ be the Markov chain restricted to the subset $\Psi^{\delta}_{n}({\rm Sk}(\Delta_\delta))$, then after identifying both sets $\{(u,Y_u\wedge (1-\delta))\colon u\in {\rm Sk}(\Delta_\delta)\}$ and $\Psi^{\delta}_{n}({\rm Sk}(\Delta_\delta))$ with ${\rm Sk}(\Delta_\delta)$, we can view both $X^{(\delta)}$ and $\widetilde X^{(\delta),n}$ as Markov chains with values in the set ${\rm Sk}(\Delta_\delta)$. Using property (1.c) of~\cref{conv-reduced-tree}, we see that the transition kernels of $\widetilde X^{(\delta),n}$ converge to those of $X^{(\delta)}$.

Write $X^{(\delta)}_\infty$ for the absorption point of $X^{(\delta)}$, and similarly write $X^{(\delta),n}_\infty$ for that of $X^{(\delta),n}$. Notice that $X^{(\delta),n}_\infty$ is also the absorption point of the restricted Markov chain $\widetilde X^{(\delta),n}$. We thus obtain that the distribution of $X^{(\delta),n}_\infty$ converges to that of $X^{(\delta)}_\infty$ (recall that both $X^{(\delta),n}_\infty$ and $X^{(\delta)}_\infty$ are viewed as taking values in the set of leaves of ${\rm Sk}(\Delta_\delta)$). Consequently, for every $u\in\mathcal{V}$ such that $x=(u,1-\ve)\in \partial\Delta_\ve$, we have
$$\lim_{n\to \infty} P(u\prec X^{(\delta),n}_\infty) = P(u\prec X^{(\delta)}_\infty).$$
However, from our definitions, we have
$$P(u\prec X^{(\delta)}_\infty) = \mu^{\ve,(\delta)}(x),$$
and, for $n$ sufficiently large, since $w^{n,\ve}_u$ coincides with the ancestor of $w^{n,\delta}_u$ at generation $n-\lfloor \ve n\rfloor$ (see Remark 1 after~\cref{conv-reduced-tree}),
$$P(u\prec X^{(\delta),n}_\infty)= \mu_n^{\ve,(\delta)}(w^{n,\ve}_u).$$
This completes the proof of (\ref{convharmo1}) and of the proposition. 
\end{proof}

Recall that $\beta$ is the Hausdorff dimension of the continuous harmonic meaure $\mu$.
\begin{proposition} 
\label{pro:comparaison} 
Let $r\geq 1$ and $\xi\in (0,1)$. We can find $ \varepsilon_{0}\in(0,1/2)$ such that the following holds.  For every $\varepsilon \in (0, \varepsilon_{0})$, there exists $n_{0} \geq 0$ such that for every $n \geq n_{0}$,
\begin{eqnarray*}  
\mathbb{E} \otimes E \bigg[ \Big|\log  \mu_{n}^ \varepsilon \big( \langle \Sigma_{n}\rangle_{n- \lfloor \varepsilon n \rfloor} \big) - \beta \log \varepsilon \Big|^{r}\bigg] & \leq &  \xi\, |\log \varepsilon|^{r}.  
\end{eqnarray*}
\end{proposition}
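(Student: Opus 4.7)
The strategy is to first establish a continuous analogue of the estimate, and then transfer it to the discrete setting using the Skorokhod coupling of~\cref{conv-reduced-tree} together with the pointwise convergence of harmonic measures from~\cref{convergence-harmonic}. Throughout, fix an auxiliary exponent $r'>r$.

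\smallskip
\emph{Continuous version.} For $x \in \partial \Delta\a$, the identity $\mu^\varepsilon(\langle x\rangle_{1-\varepsilon}) = \mu_{\alpha}(\mathcal{B}_{\bd}(x,2\varepsilon))$ holds, since $\langle x \rangle_{1-\varepsilon} \prec y$ is equivalent to $\bd(x,y)\leq 2\varepsilon$. Hence~\cref{thm:dim-stable} gives that $\P$-a.s., for $\mu_{\alpha}$-almost every $x$, $\log\mu^\varepsilon(\langle x\rangle_{1-\varepsilon})/\log \varepsilon \to \beta$ as $\varepsilon\downarrow 0$. I would promote this pointwise statement to
\begin{equation*}
\mathbb{E}\otimes P\Big[\big|\log\mu^\varepsilon(\langle B_{T-}\rangle_{1-\varepsilon}) - \beta\log\varepsilon\big|^r\Big] = o(|\log\varepsilon|^r)
\end{equation*}
by a uniform integrability argument. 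The essential tool is the elementary \emph{entropy estimate}: for any probability $\nu$ on a finite set of cardinality $S$, the tail bound $\nu(\{v\colon \nu(v)<e^{-t}\}) \leq S e^{-t}$ combined with the tail-integral formula yields $\int |\log\nu|^{r'}\mathrm{d}\nu \leq C_{r'}(\log S)^{r'}$. Applied to $\nu=\mu^\varepsilon$, and noting that $\#\partial\Delta\a_\varepsilon$ corresponds via the CTGW identification of~\cref{sec:ctgwtree} to $\#\Gamma\a_{-\log\varepsilon}$, whose mean is $\varepsilon^{-1/(\alpha-1)}$, a concavity argument in the spirit of~\cref{lem:level-size} gives $\mathbb{E}[(\log\#\partial\Delta\a_\varepsilon)^{r'}] \leq C|\log\varepsilon|^{r'}$. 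Consequently $|\log\mu^\varepsilon(\langle B_{T-}\rangle_{1-\varepsilon})/\log\varepsilon|^{r'}$ has uniformly bounded expectation, and combined with the pointwise convergence above this produces the $L^r$ assertion.

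\smallskip
\emph{Transfer to the discrete setting.} Given $\xi\in(0,1)$, choose $\varepsilon_0\in(0,1/2)$ small enough that the above continuous estimate is bounded by $(\xi/2)|\log\varepsilon|^r$ for every $\varepsilon<\varepsilon_0$. Fix such an $\varepsilon$ and work under the coupling of~\cref{conv-reduced-tree}. Setting $S_n\colonequals \#\t^{*n}_{n-\lfloor\varepsilon n\rfloor}$, define
\begin{equation*}
A_n^{(\varepsilon)} \colonequals E\Big[\big|\log\mu_n^\varepsilon(\langle\Sigma_n\rangle_{n-\lfloor\varepsilon n\rfloor}) - \beta\log\varepsilon\big|^r\Big] = \sum_{v\in\t^{*n}_{n-\lfloor\varepsilon n\rfloor}} \mu_n^\varepsilon(v)\,\big|\log\mu_n^\varepsilon(v) - \beta\log\varepsilon\big|^r.
\end{equation*}
The bijective part of assertion (1.b) in~\cref{conv-reduced-tree} identifies, for $n$ large enough, the set $\t^{*n}_{n-\lfloor\varepsilon n\rfloor}$ with the (deterministic, finite given $\Delta\a$) set of leaves of $\mathrm{Sk}(\Delta\a_\varepsilon)$, and~\cref{convergence-harmonic} gives the termwise convergence $\mu_n^\varepsilon(w_u^{n,\varepsilon}) \to \mu^\varepsilon((u,1-\varepsilon))$. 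Since the sum involves only finitely many terms, $A_n^{(\varepsilon)} \to A_\infty^{(\varepsilon)}$ $\P$-almost surely, where $A_\infty^{(\varepsilon)}$ is the continuous analogue.

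\smallskip
\emph{Uniform integrability and conclusion.} Reapplying the entropy estimate in the discrete setting gives $A_n^{(\varepsilon)} \leq C_r\big((\log S_n)^r + |\log\varepsilon|^r\big)$, and hence $(A_n^{(\varepsilon)})^{r'/r}$ is dominated by a constant multiple of $(\log S_n)^{r'} + |\log\varepsilon|^{r'}$. Invoking~\cref{lem:level-size} with $p = \lfloor\varepsilon n\rfloor \leq n/2$, we get $\mathbb{E}[(\log S_n)^{r'}] \leq C'|\log\varepsilon|^{r'}$, producing a uniform $L^{r'/r}$ bound on $A_n^{(\varepsilon)}$ and thereby the uniform integrability in $n$. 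Therefore $\mathbb{E}[A_n^{(\varepsilon)}] \to \mathbb{E}[A_\infty^{(\varepsilon)}] \leq (\xi/2)|\log\varepsilon|^r$, and choosing $n_0$ large enough yields $\mathbb{E}\otimes E[A_n^{(\varepsilon)}] \leq \xi|\log\varepsilon|^r$.

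The main obstacle is precisely this last uniform integrability step: because $-\log\mu_n^\varepsilon(v)$ admits no deterministic upper bound — a single atypically small conductance ratio along the ancestor path of $v$ can send it to infinity — the a.s.\ convergence of $A_n^{(\varepsilon)}$ would not translate into convergence of expectations without the entropy inequality, which converts the unruly $r$-th moment of $\log\mu_n^\varepsilon$ into the much more tractable quantity $(\log \#\t^{*n}_{n-\lfloor\varepsilon n\rfloor})^r$. It is exactly here that~\cref{lem:level-size}, which encodes the slowly-varying asymptotics~\eqref{eq:survivalpro} of $q_n$, produces the correct logarithmic growth in $|\log\varepsilon|$ (rather than in $\log n$) and closes the argument.
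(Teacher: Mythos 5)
Your proof is correct and uses the same essential ingredients as the paper — Theorem~\ref{thm:dim-stable}, the coupling of Proposition~\ref{conv-reduced-tree}, Proposition~\ref{convergence-harmonic}, Lemma~\ref{lem:level-size}, and an entropy-type bound — but organizes them along a genuinely different route. The paper keeps the continuous estimate in probability form (a tail bound from Theorem~\ref{thm:dim-stable}), proves convergence in distribution of $\mu_n^\varepsilon(\langle\Sigma_n\rangle_{n-\lfloor\varepsilon n\rfloor})$ towards $\mu(\mathcal{B}_{\bd}(B_{T-},2\varepsilon))$ under $\P\otimes P$ to transfer that tail bound, and then converts from a probability bound to an $L^r$ bound via Cauchy--Schwarz, paying with a $2r$-th moment which is controlled by a concave majorant of $x\mapsto x|\log x|^{2r}$ together with Lemma~\ref{lem:level-size}. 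You instead upgrade the a.s.\ statement of Theorem~\ref{thm:dim-stable} to an $L^r$ statement directly in the continuous setting, then prove $\P$-a.s.\ convergence of the quenched expectation $A_n^{(\varepsilon)}$ to its continuous analogue (a finite sum by (1.b), with each term converging by Proposition~\ref{convergence-harmonic}), and close by checking uniform integrability of $A_n^{(\varepsilon)}$ in $n$ via the entropy bound and Lemma~\ref{lem:level-size}. Both routes extract exactly the same information from Lemma~\ref{lem:level-size}: the sub-$\log n$ growth of $\log\#\t^{*n}_{n-\lfloor\varepsilon n\rfloor}$ in terms of $|\log\varepsilon|$. Your version makes the role of uniform integrability over $\P$ explicit, at the cost of needing two applications of the entropy bound (one for the continuous estimate, one for UI), whereas the paper's Cauchy--Schwarz route compresses these into a single moment computation. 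A small caveat: in your continuous step, the function $x\mapsto x|\log x|^{r'}$ is not concave near $x=1$, so to apply Jensen you should use a truncated concave majorant as the paper does with $g(x)=(x\wedge e^{-2r})|\log(x\wedge e^{-2r})|^{2r}$; your alternative tail-integral derivation does avoid this issue cleanly.
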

 
\proof 
Recall our notation $\mathcal{B}_\mathbf{d}(x,r)$ for the closed ball of radius $r$ centered at $x\in\Delta$. Fix $\eta\in(0,1)$. Since $B_{T_{-}}$ is distributed according to $\mu$, it follows from~\cref{thm:dim-stable} that there exists $\varepsilon_{0}\in(0,1/2)$ such that for every $ \varepsilon \in (0, \varepsilon_{0})$ we have 
\begin{equation}
\label{compatech1}
\mathbb{P} \otimes P \left( \Big|\log \mu ( \mathcal{B}_\mathbf{d}(B_{T_{-}}, 2\varepsilon)) - \beta \log \varepsilon\Big| > 
 (\eta/2)|\log \varepsilon|\right) < \eta/2.
\end{equation} 

Let us fix $\ve\in(0,\ve_0)$ and assume that the reduced trees $\t^{*n}$ and the (continuous) tree $\Delta$ have been constructed so that the properties of~\cref{conv-reduced-tree} hold. We now claim that, under $\P\otimes P$,
\begin{equation}
\label{convharmocont}
\mu_{n}^ \varepsilon( \langle \Sigma_{n}\rangle_{n- \lfloor \varepsilon n \rfloor})\build{\longrightarrow}_{n\to\infty}^{(\mathrm{d})} \mu (\mathcal{B}_\bd(B_{T_{-}}, 2\varepsilon)).
\end{equation}
To see this, let $f$ be a continuous function on $[0,1]$. Since  the distribution of $\langle \Sigma_n \rangle_{n-\lfloor \ve n\rfloor}$ under $P$ is $\mu_n^\ve$, we have
$$\E\otimes E\Big[ f(\mu_{n}^ \varepsilon( \langle \Sigma_{n}\rangle_{n- \lfloor \varepsilon n \rfloor}))\Big]= \E\bigg[ \sum_{v\in \t^{*n}_{n-\lfloor \ve n\rfloor}} \mu_n^\ve(v)\, f(\mu_n^\ve(v))\bigg].$$
By property (1.b) of~\cref{conv-reduced-tree}, we know that $\P$-a.s.~for $n$ sufficiently large,
$$\sum_{v \in \t^{*n}_{n-\lfloor \ve n\rfloor}} \mu_n^\ve(v)\, f(\mu_n^\ve(v))
=\sum_{x=(u,1-\ve)\in \partial \Delta_\ve} \mu^\ve_n(w^{n,\ve}_u)\,f(\mu^\ve_n(w^{n,\ve}_u)),$$
and by~\cref{convergence-harmonic} the latter quantities converge as $n\to\infty$ towards
$$\sum_{x\in\partial \Delta_\ve} \mu^\ve(x)\,f(\mu^\ve(x)) = E\big[f( \mu (\mathcal{B}_\bd(B_{T_{-}}, 2\varepsilon)))\big],$$
which establishes the convergence \eqref{convharmocont} as claimed.

By \eqref{compatech1} and \eqref{convharmocont}, we can 
find $ n_{0}=n_{0}( \varepsilon) \geq \varepsilon^{-1}$  such that for $n\geq n_{0}$,
\begin{eqnarray*} 
\mathbb{P} \otimes P\left( \left| \log  \mu_{n}^ \varepsilon\big(  \langle \Sigma_{n}\rangle_{n - \lfloor  \varepsilon n \rfloor}\big) - \beta \log \varepsilon \right| > \eta\,|{\log \varepsilon}| \right) < \eta.  
\end{eqnarray*}
Using the Cauchy-Schwarz inequality, we have then
\begin{eqnarray} 
\label{compatech2}
 &&\mathbb{E} \otimes E \left[ \left|\log  \mu_{n}^ \varepsilon\big(  \langle \Sigma_{n}\rangle_{n - \lfloor  \varepsilon n \rfloor}\big) - \beta \log \varepsilon \right|^{r}\right]\notag\\ 
 &&\qquad\leq  \eta^{r} |\log \varepsilon|^{r}  + \eta^{\frac{1}{2}} \mathbb{E} \otimes E\Big[\left|\log  \mu_{n}^ \varepsilon\big(  \langle \Sigma_{n}\rangle_{n - \lfloor  \varepsilon n \rfloor}\big) - \beta \log \varepsilon \right|^{2r}\Big]^{1/2}\notag\\
 &&\qquad \leq \eta^{r} |\log \varepsilon|^{r}+2^{r}\eta^{\frac{1}{2}}|\beta\log\ve |^{r}  + 2^{r}\eta^{\frac{1}{2}} \mathbb{E} \otimes E\Big[\left|\log  \mu_{n}^ \varepsilon\big(  \langle \Sigma_{n}\rangle_{n - \lfloor  \varepsilon n \rfloor}\big) \right|^{2r}\Big]^{1/2}. 
\end{eqnarray}
Since $r\geq 1$, the function
$$g(x)\colonequals(x \wedge e^{-2r})\,|\log(x \wedge e^{-2r})|^{2r}$$ 
is nondecreasing and concave over $[0,1]$. Thus, we obtain
\begin{align*}
E\Big[\left|\log  \mu_{n}^ \varepsilon\big(  \langle \Sigma_{n}\rangle_{n - \lfloor  \varepsilon n \rfloor}\big) \right|^{2r}\Big]&
=\sum_{v\in \t^{*n}_{n-\lfloor \ve n\rfloor}} \mu_n^\ve(v) |\log \mu_n^\ve(v)|^{2r}\\
&\leq \sum_{v\in \t^{*n}_{n-\lfloor \ve n\rfloor}} g(\mu_n^\ve(v))+ (2r)^{2r}\\
&\leq \#\t^{*n}_{n-\lfloor \ve n\rfloor}\times  g\Big( ( \#\t^{*n}_{n-\lfloor \ve n\rfloor} )^{-1}\Big) + (2r)^{2r}\\
&\leq \Big| \log  \#\t^{*n}_{n-\lfloor \ve n\rfloor} \Big|^{2r} + 2(2r)^{2r}.
\end{align*}
We now use \cref{lem:level-size} to see
$$\E\otimes E\Big[\left|\log  \mu_{n}^ \varepsilon\big(  \langle \Sigma_{n}\rangle_{n - \lfloor  \varepsilon n \rfloor}\big) \right|^{2r}\Big]
\leq  \E\Big[\Big| \log  \#\t^{*n}_{n-\lfloor \ve n\rfloor} \Big|^{2r}\Big]+ 2(2r)^{2r} \leq 
C^{2r} \Big(\log\frac{n}{\lfloor \ve n\rfloor}\Big)^{2r}+2(2r)^{2r}.$$
By combining the last estimate with (\ref{compatech2}), we get that, for every $n\geq n_0(\ve)$,
$$\mathbb{E} \otimes E \left[ \left|\log  \mu_{n}^ \varepsilon\big(  \langle \Sigma_{n}\rangle_{n - \lfloor  \varepsilon n \rfloor}\big) - \beta \log \varepsilon \right|^{r}\right] \leq (\eta^{r}+ 2^{r}\eta^{\frac{1}{2}}\beta^{r})|\log\ve|^{r} + 2^{r+1}\eta^{\frac{1}{2}} \big( (2r)^{r} + C^{r} |\log\ve|^{r}\big).$$
The statement of the proposition follows since $\eta$ was arbitrary. 
\endproof

\subsection{The flow property of discrete harmonic measure}
We briefly recall the flow property of the discrete harmonic measure $\mu_{n}$ presented in~\cite[Section 4.3.1]{CLG13}. Let $\mathsf{t}\in\mathscr{T}_n$ be a plane tree of height $n$ and $Z^{(\mathsf{t})}=(Z^{(\mathsf{t})}_k)_{k\geq 0}$ be simple random walk on $\mathsf{t}$ starting from $\varnothing$. We set 
$$H_n^{(\mathsf{t})}\colonequals \inf\{k\geq 0\colon |Z^{(\mathsf{t})}_k|=n\}\quad\mbox{ and } \quad \Sigma_{n}^{(\mathsf{t})}\colonequals  Z^{(\mathsf{t})}_{H^{(\mathsf{t})}_n}.$$ 
We write $\mu^{(\mathsf{t})}_n$ for the distribution of $\Sigma_{n}^{(\mathsf{t})}$, considered as a measure on $\mathsf{t}$ supported on $\mathsf{t}_n$. 

For $0\leq p\leq n$, we set
$$L^{(\mathsf{t})}_p  \colonequals \sup\{k\leq H_n^{(\mathsf{t})}\colon |Z^{(\mathsf{t})}_k| =p\}.$$
Clearly $\Sigma_{n}^{(\mathsf{t})} \in \wt{\mathsf{t}}[Z^{(\mathsf{t})}_{L^{(\mathsf{t})}_p}]$, and therefore $ Z^{(\mathsf{t})}_{L^{(\mathsf{t})}_p}=\langle\Sigma_{n}^{(\mathsf{t})}\rangle_p$. 

\begin{lemma}[Lemma 20 in~\cite{CLG13}]
\label{conditioning-subtree}
Let $p\in \{0,1,\ldots,n-1\}$ and $z\in \mathsf{t}_p$. Then, conditionally on $\langle\Sigma_{n}^{(\mathsf{t})}\rangle_p=z$, the process
$$\Big( Z^{(\mathsf{t})}_{(L^{(\mathsf{t})}_p+k)\wedge H^{(\mathsf{t})}_n}\Big)_{k\geq 0}$$
is distributed as simple random walk on $\wt{\mathsf{t}}[z]$ starting from $z$ and conditioned to hit $\wt{\mathsf{t}}[z]\cap \mathsf{t}_n$ before returning to $z$, and stopped at this hitting time. Consequently, for every integer $q\in\{0,1,\ldots,n-p\}$, the conditional distribution of 
$$ \frac{\mu_n^{(\mathsf{t})}\big(B_{\mathsf{t}}(\Sigma_{n}^{(\mathsf{t})},q)\big)}{\mu_n^{(\mathsf{t})}\big(B_{\mathsf{t}}(\Sigma_{n}^{(\mathsf{t})},n-p)\big)}$$
knowing that $\langle\Sigma_{n}^{(\mathsf{t})}\rangle_p=z$ is equal to the distribution of
$$\mu^{(\mathsf{t}[z])}_{n-p}\big(B_{\mathsf{t}[z]}(\Sigma_{n-p}^{(\mathsf{t}[z])},q)\big).$$
\end{lemma}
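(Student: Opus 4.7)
The plan is to establish the first assertion by a direct path-decomposition argument: although $L_p^{(\mathsf{t})}$ is not a stopping time, the event $\{\langle\Sigma_n^{(\mathsf{t})}\rangle_p = z\}$ admits a clean factorization over concatenated paths, so one can identify the conditional law of the post-$L_p^{(\mathsf{t})}$ suffix term by term.

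First I would record the deterministic observation that, on $\{\langle\Sigma_n^{(\mathsf{t})}\rangle_p = z\}$, one has $Z^{(\mathsf{t})}_{L_p^{(\mathsf{t})}} = z$ and $Z^{(\mathsf{t})}_k \in \wt{\mathsf{t}}[z]\setminus\{z\}$ for every $k \in \{L_p^{(\mathsf{t})}+1,\ldots,H_n^{(\mathsf{t})}\}$: the former because $Z^{(\mathsf{t})}_{L_p^{(\mathsf{t})}}$ is the unique ancestor of $\Sigma_n^{(\mathsf{t})}$ at generation $p$, the latter by maximality of $L_p^{(\mathsf{t})}$ among visits to generation $p$.

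Next I would compute, for each admissible pair $(\pi,\sigma)$ where $\pi = (x_0=\varnothing,\ldots,x_L=z)$ avoids generation $n$ and $\sigma = (y_0=z, y_1,\ldots,y_M)$ satisfies $y_1,\ldots,y_{M-1} \in \wt{\mathsf{t}}[z]\setminus\{z\}$ together with $|y_M| = n$, the probability that $Z^{(\mathsf{t})}$ traces the concatenation with $L_p^{(\mathsf{t})} = L$ and $H_n^{(\mathsf{t})} = L+M$. This factors as $\bigl(\prod_{i=0}^{L-1} \mathrm{deg}_\mathsf{t}(x_i)^{-1}\bigr) \cdot \mathrm{deg}_\mathsf{t}(z)^{-1} \cdot \prod_{j=1}^{M-1}\mathrm{deg}_\mathsf{t}(y_j)^{-1}$. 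Since each $y_j$ with $1\leq j\leq M-1$ differs from $z$, one has $\mathrm{deg}_\mathsf{t}(y_j) = \mathrm{deg}_{\wt{\mathsf{t}}[z]}(y_j)$, whereas at $z$ itself $\mathrm{deg}_\mathsf{t}(z) = k_z(\mathsf{t})+1$ while $\mathrm{deg}_{\wt{\mathsf{t}}[z]}(z) = k_z(\mathsf{t})$. Summing over prefixes $\pi$ and over $L$, one obtains that $P(\langle\Sigma_n^{(\mathsf{t})}\rangle_p = z,\,\sigma\text{ traversed})$ equals a constant depending only on $(\mathsf{t},z)$ times the weight of $\sigma$ under simple random walk on $\wt{\mathsf{t}}[z]$ started at $z$. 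Normalization then yields that the conditional distribution of $(Z^{(\mathsf{t})}_{(L_p^{(\mathsf{t})}+k)\wedge H_n^{(\mathsf{t})}})_{k\geq 0}$ given $\langle\Sigma_n^{(\mathsf{t})}\rangle_p = z$ is exactly that of simple random walk on $\wt{\mathsf{t}}[z]$ started at $z$, conditioned on hitting $\wt{\mathsf{t}}[z]\cap\mathsf{t}_n$ before returning to $z$, and stopped at that hitting time.

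For the ``Consequently'' part, I would identify $\wt{\mathsf{t}}[z]$ with $\mathsf{t}[z]$ via the canonical relabeling. On the event $\langle\Sigma_n^{(\mathsf{t})}\rangle_p = z$, the formula $d(u,v) = n - |u\wedge v|$ valid for $u,v\in\mathsf{t}_n$ gives $B_\mathsf{t}(\Sigma_n^{(\mathsf{t})},n-p)\cap\mathsf{t}_n = \wt{\mathsf{t}}[z]\cap\mathsf{t}_n$, and for $q\leq n-p$ the set $B_\mathsf{t}(\Sigma_n^{(\mathsf{t})},q)\cap\mathsf{t}_n$ corresponds under the identification to $B_{\mathsf{t}[z]}(\Sigma_{n-p}^{(\mathsf{t}[z])},q)$. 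Combined with the first assertion, this reduces matters to the claim that simple random walk on $\mathsf{t}[z]$ started at its root, conditioned on hitting generation $n-p$ before returning to the root, has the same hitting distribution on $\mathsf{t}[z]_{n-p}$ as the unconditioned walk, which follows by the strong Markov property applied at successive returns to the root. The main bookkeeping hurdle is the degree discrepancy at $z$, but this merely contributes the constant factor $k_z(\mathsf{t})/(k_z(\mathsf{t})+1)$, absorbed harmlessly at the normalization step.
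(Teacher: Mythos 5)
Your proof is correct, and since the paper simply cites Lemma 20 of Curien--Le Gall without reproving it, there is no in-paper proof to compare against; your path-decomposition argument (factorizing over prefix/suffix at the last passage time $L_p$, matching weights term by term modulo the degree discrepancy at $z$, then translating balls under the canonical relabeling and using that conditioning on no return to the root leaves the hitting distribution unchanged by the strong Markov property) is precisely the standard way this lemma is established in the cited reference.
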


\subsection{The subtree selected by the discrete harmonic measure}
We begin by introducing the conductance of discrete trees. Let $i$ be a positive integer and let $\mathsf{t} \in\mathscr{T}$ be a tree such that $h(\mathsf{t})\geq i$. Consider the new graph $\mathsf{t}'$ obtained by adding to the graph $\mathsf{t}$ an edge between the root $\varnothing$ and an extra vertex~$\partial$. We denote by $\mathcal{C}_i(\mathsf{t})$ the effective conductance between $\partial$ and generation $i$ of $\mathsf{t}$ in the graph $\mathsf{t}'$. In probabilistic terms, it is equal to the probability that simple random walk on $\mathsf{t}'$ starting from~$\varnothing$ hits generation $i$ of $\mathsf{t}$ before hitting the vertex $\partial$.

Recall that for $i\in\{1,\ldots,n-1\}$, $\wt\t^{*n}[\langle\Sigma_n\rangle_{n-i}]$ is the subtree of $\t^{*n}$ above generation $n-i$ that is selected by harmonic measure, and $\t^{*n}[\langle\Sigma_n\rangle_{n-i}]$ is the tree obtained by relabeling the vertices of $\wt\t^{*n}[\langle\Sigma_n\rangle_{n-i}]$ as explained above. 

\begin{lemma}
\label{lem:tree-selected}
For every integer $i\in \{1,\ldots,n-1\}$ and every nonnegative function $F$ on $\mathscr{T}$,
$$\E\otimes E \big[ F\big(\t^{*n}[\langle\Sigma_n\rangle_{n-i}]\big) \big] \leq (i+1) \, \E\big[ \mathcal{C}_i(\t^{*i})\, F(\t^{*i}) \big].$$
\end{lemma}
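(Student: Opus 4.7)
The strategy is to expand
\[
\E\otimes E\bigl[F\bigl(\t^{*n}[\langle\Sigma_n\rangle_{n-i}]\bigr)\bigr]
= \E_{\t^{(n)}}\biggl[\sum_{v\in\t^{*n}_{n-i}} \mu_n(A_v)\,F(\t^{*n}[v])\biggr],
\]
where $A_v = \{w\in\t^{*n}_n : v\prec w\}$, and to dominate $\mu_n(A_v)$ by a product of an inside factor depending only on $\t^{*n}[v]$ and an outside factor depending only on the part of $\t^{*n}$ outside $\wt\t^{*n}[v]\setminus\{v\}$. The branching property of the Galton-Watson tree $\t^{(n)}$ will then decouple the two pieces and reduce the inequality to an outside expectation that one wants to bound by $i+1$.

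To get such a decomposition I would first iterate Lemma~\ref{conditioning-subtree} along the ancestral chain $\varnothing=v_0,v_1,\ldots,v_{n-i}=v$. At each step the flow property gives
\[
P\bigl(\langle\Sigma_n\rangle_{j+1}=v_{j+1}\mid\langle\Sigma_n\rangle_j=v_j\bigr)
= \frac{\mathcal{C}_{n-j-1}(\t^{*n}[v_{j+1}])}{\sum_{c\text{ child of }v_j}\!\mathcal{C}_{n-j-1}(\t^{*n}[c])}\,,
\]
and, combining this with the classical series/parallel identity $\sum_{c\text{ child of }x}\mathcal{C}_{m-1}(\t^{*n}[c]) = \mathcal{C}_m(\t^{*n}[x])/(1-\mathcal{C}_m(\t^{*n}[x]))$, the product telescopes to the closed form
\[
\mu_n(A_v) = \frac{\mathcal{C}_i(\t^{*n}[v])}{\mathcal{C}_n(\t^{*n})}\prod_{j=0}^{n-i-1}\bigl(1-\mathcal{C}_{n-j}(\t^{*n}[v_j])\bigr).
\]
Decomposing the walk into excursions away from $v$ gives the equivalent representation $\mu_n(A_v) = \pi_v\cdot\mathcal{C}_i(\t^{*n}[v])/[1-(1-\mathcal{C}_i(\t^{*n}[v]))q_v]$, where $\pi_v = P^\varnothing(T_v<H_n)$ and $q_v = P^{\hat v}(T_v<H_n)$ with $T_v$ the first hitting time of $v$. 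Since the walk must traverse $v$ to enter $A_v$, both $\pi_v$ and $q_v$ are functions only of the part of $\t^{*n}$ outside the strict subtree above $v$. The crude bound $1-(1-\mathcal{C}_i(\t^{*n}[v]))q_v\geq 1-q_v$ then yields the inside/outside split
\[
\mu_n(A_v) \leq \frac{\pi_v}{1-q_v}\,\mathcal{C}_i(\t^{*n}[v]).
\]

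Conditionally on $v\in\t^{*n}_{n-i}$, the branching property says that $\t^{*n}[v]=(\t[v])^{*i}$ has law $\t^{*i}$ and is independent of the outside data, in particular of $(\pi_v,q_v)$. Summing the displayed inequality over~$v$ and factoring the expectation gives
\[
\E\otimes E\bigl[F(\t^{*n}[\langle\Sigma_n\rangle_{n-i}])\bigr]
\leq \E_{\t^{(n)}}\Bigl[\sum_{v\in\t^{*n}_{n-i}}\frac{\pi_v}{1-q_v}\Bigr]\cdot\E\bigl[\mathcal{C}_i(\t^{*i})\,F(\t^{*i})\bigr].
\]
The proof is then reduced to showing that the prefactor on the right-hand side is at most $i+1$. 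From $\pi_v=\pi_{\hat v}q_v$ one obtains the telescoping identity $(1-q_v)/\pi_v = 1/\pi_v - 1/\pi_{\hat v}$, so that $\pi_v/(1-q_v) = \pi_v\pi_{\hat v}/(\pi_{\hat v}-\pi_v)$ is a Green-function-type object along each ancestral chain. The bound by $i+1$ should come from reinterpreting the outside sum as the expected number of vertices of $\t^{*n}$ at levels $n-i,n-i+1,\ldots,n$ visited by simple random walk on $\t^{*n}$ before the absorption time $H_n$, a quantity naturally controlled by the length $i+1$ of this range of levels through reversibility. This last step is the main obstacle, namely identifying the exact random-walk observable whose expectation equals $i+1$; as a sanity check, the closed-form formula from the first step gives equality in the deterministic case of a single path from the root to generation~$n$.
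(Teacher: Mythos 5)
Your preliminary steps are essentially correct: the telescoping computation, and the excursion identity
$\mu_n(A_v)=\pi_v\,\mathcal{C}_i(\t^{*n}[v])\big/\big[1-\big(1-\mathcal{C}_i(\t^{*n}[v])\big)q_v\big]$,
both check out, and the observation that $\pi_v=P_\varnothing(T_v<H_n)$ and $q_v=P_{\hat v}(T_v<H_n)$ are functionals only of the part of $\t^{*n}$ outside the strict subtree above $v$ is the right structural input. The factorization through the branching property is also correct (note, though, that the event $\{v\in\t^{*n}_{n-i}\}$ itself depends on whether $\t[v]$ survives $i$ generations, so one has to argue via the unconditioned tree and conditional independence given the truncation; this works, but it is not as immediate as the one-line statement suggests). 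The genuine gap is the crude bound $1-(1-\mathcal{C}_i)q_v\ge 1-q_v$: on the event $\{q_v=1\}$ — which has positive $\P$-probability, namely when the subtree above $v$ is the only branch of $\t^{*n}_{n-i}$ that reaches generation $n$, so that generation $n$ is unreachable from $\hat v$ without passing through $v$ — one has $\pi_v/(1-q_v)=\infty$, so $\E\big[\sum_v\pi_v/(1-q_v)\big]=\infty$. The bound by $i+1$ that you hope for is therefore not merely hard to prove; it is false, because the quantity is not even finite.

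The repair is to keep one factor of the conductance in the denominator instead of discarding it. Write $\phi(c,q)=c/(1-(1-c)q)$, so that $\mu_n(A_v)=\pi_v\,\phi\big(\mathcal{C}_i(\t^{*n}[v]),q_v\big)$, and observe that $\phi$ is nondecreasing in $c$. Since $\t^{*n}[v]$ has height $i$ it contains a path from its root to generation $i$, so Rayleigh monotonicity gives $\mathcal{C}_i(\t^{*n}[v])\ge 1/(i+1)$. A direct computation shows that for $c\ge 1/(i+1)$ one has
$\phi(c,q)\le (i+1)\,c\,\phi\big(\tfrac{1}{i+1},q\big)$
(this reduces to $1-(1-c)q\ge 1-\tfrac{i}{i+1}q$, i.e.\ $c\ge\tfrac{1}{i+1}$). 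Setting
$\rho_v:=\pi_v\,\phi\big(\tfrac{1}{i+1},q_v\big)=\pi_v/(i+1-iq_v)$,
which is an outside-only quantity and is finite since $i+1-iq_v\ge 1$, we obtain $\mu_n(A_v)\le (i+1)\,\mathcal{C}_i(\t^{*n}[v])\,\rho_v$. Moreover, because $\phi$ increases in $c$, $\rho_v\le\pi_v\,\phi\big(\mathcal{C}_i(\t^{*n}[v]),q_v\big)=\mu_n(A_v)$, so $\sum_{v\in\t^{*n}_{n-i}}\rho_v\le\sum_v\mu_n(A_v)=1$ deterministically. Factoring exactly as you describe then gives
$\E\otimes E\big[F(\t^{*n}[\langle\Sigma_n\rangle_{n-i}])\big]\le (i+1)\,\E\big[\sum_v\rho_v\big]\,\E\big[\mathcal{C}_i(\t^{*i})F(\t^{*i})\big]\le (i+1)\,\E\big[\mathcal{C}_i(\t^{*i})F(\t^{*i})\big]$,
and no separate random-walk Green-function estimate for the ``outside'' sum is needed at all.
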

This lemma is proved in~\cite{CLG13} under the assumption that $\rho$ has finite variance. Actually the proof uses only the branching property of Galton-Watson trees and remains valid under our assumptions on $\rho$.

Meanwhile, we have the following moment estimate for the conductance $\mathcal{C}_n(\t^{*n})$. 

\begin{lemma}
\label{moment-conductance}
For every $r\in (0,\alpha)$, there exists a constant $K=K(r,\rho)\geq 1$ depending on $r$ and the offspring distribution $\rho$ such that, for every integer $n\geq 1$,
$$\E\big[\mathcal{C}_n(\t^{*n})^{r}\big] \leq \frac{K}{(n+1)^{r}}\,.$$
\end{lemma}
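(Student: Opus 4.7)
The starting point is the recursive distributional identity
$$\mathcal{C}_n(\mathsf{T}^{*n})\overset{(d)}{=}\frac{S_n}{1+S_n},\qquad S_n:=\sum_{i=1}^{k_\varnothing}\mathcal{C}_{n-1}(\mathsf{T}^{*(n-1)}_i),$$
obtained from the standard series/parallel rules applied at the root and its subtrees. Here $k_\varnothing$ is the number of children of the root in $\mathsf{T}^{*n}$, and conditionally on $k_\varnothing$ the $\mathsf{T}^{*(n-1)}_i$ are i.i.d.\ copies of $\mathsf{T}^{*(n-1)}$. The law of $k_\varnothing$ is an explicit mixture built from $\rho$ and the survival probability $q_{n-1}$; using \eqref{eq:stable-attraction} and \eqref{eq:survivalpro} one checks $\mathbb{E}[k_\varnothing]=q_{n-1}/q_n=1+O(1/n)$ and, decomposing $\mathbb{E}[k_\varnothing^{r}]$ along factorial moments, $\sup_{n}\mathbb{E}[k_\varnothing^{r}]<\infty$ for every $r<\alpha$. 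This uniform bound on moments of $k_\varnothing$ is precisely the step where the restriction $r<\alpha$ enters.

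I would then set $v_n:=(n+1)^{r}\mathbb{E}[\mathcal{C}_n(\mathsf{T}^{*n})^{r}]$ and prove $\sup_n v_n<\infty$ by induction on $n$. The inductive step rests on a quantitative refinement of the trivial bound $s/(1+s)\le s$, namely the elementary inequality
$$\Bigl(\frac{s}{1+s}\Bigr)^{r}\le s^{r}-c_{r}\bigl(s^{r+1}\wedge s^{r}\bigr),\qquad s\ge 0,$$
valid for a suitable $c_{r}>0$. Inserted in the recursion with $s=S_n$, and combined with the power-mean bound $S_n^{r}\le k_\varnothing^{(r-1)\vee0}\sum_i\mathcal{C}_{n-1}(\mathsf{T}^{*(n-1)}_i)^{r}$ (or the subadditivity $S_n^{r}\le\sum_i\mathcal{C}_{n-1}(\mathsf{T}^{*(n-1)}_i)^{r}$ when $r\le 1$) together with the conditional independence of the subtrees, one derives a recursion of the schematic form
$$\mathbb{E}[\mathcal{C}_n^{r}]\le\mathbb{E}[k_\varnothing^{r\vee 1}]\,\mathbb{E}[\mathcal{C}_{n-1}^{r}]-c_{r}\,\mathbb{E}[S_n^{r+1}\wedge S_n^{r}].$$
Thanks to the bounds on $\mathbb{E}[k_\varnothing^{r\vee 1}]$ and the inductive hypothesis $v_{n-1}\le K$, the first term contributes at most $K/(n+1)^r$ up to a lower-order term of size $1/n^{r+1}$; the induction then closes provided the correction $\mathbb{E}[S_n^{r+1}\wedge S_n^{r}]$ is at least of order $n^{-(r+1)}$.

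The main obstacle is therefore to produce the lower bound $\mathbb{E}[S_n^{r+1}\wedge S_n^{r}]\gtrsim n^{-(r+1)}$. This reduces to a one-sided anti-concentration estimate of the form $\mathbb{P}(\mathcal{C}_{n-1}(\mathsf{T}^{*(n-1)})\ge\delta/n)\ge c$ for some universal constants $\delta,c>0$, expressing that the discrete conductance is typically not much smaller than the inverse depth of the tree. For large $n$ such a bound follows from the coupling of Proposition~\ref{conv-reduced-tree} between $\mathsf{T}^{*(n-1)}$ and the continuous reduced tree $\Delta^{(\alpha)}$, together with the almost-sure inequality $\mathcal{C}^{(\alpha)}\ge 1$ (which translates, after the length rescaling by $1/n$, into $\mathcal{C}_{n-1}\gtrsim 1/n$ with high probability); for finitely many small $n$ the bound is direct. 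Alternatively, one may apply Nash--Williams to $\mathsf{T}^{*(n-1)}$ together with the level-size estimate of Lemma~\ref{lem:level-size}. Once this anti-concentration input is in hand, all pieces combine to give $\sup_{n}v_n\le K=K(r,\rho)$, completing the proof.
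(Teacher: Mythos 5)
Your starting point is sound: the recursive identity $\mathcal{C}_n(\mathsf{T}^{*n})\overset{(d)}{=}S_n/(1+S_n)$ with $S_n=\sum_{i=1}^{k_\varnothing}\mathcal{C}_{n-1}(\mathsf{T}^{*(n-1)}_i)$ is correct, and so is the moment bound $\sup_n\E[k_\varnothing^{\,r}]<\infty$ for $r<\alpha$ (this is indeed where the restriction $r<\alpha$ bites). However, the induction does not close, and the flaw is in the sentence asserting that the first term ``contributes at most $K/(n+1)^r$ up to a lower-order term of size $1/n^{r+1}$''. Keeping track of constants, the first term is $\E[k_\varnothing^{\,r\vee 1}]\,\E[\mathcal{C}_{n-1}^{\,r}]\le (1+a/n+o(1/n))\,K/n^r$ with $a=\tfrac{1}{\alpha-1}$, while $K/(n+1)^r = K/n^r\,(1-r/n+O(1/n^2))$, so the excess over the target is $\sim K(r+\tfrac{1}{\alpha-1})/n^{r+1}$ --- it is proportional to $K$, not $O(1/n^{r+1})$. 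On the other side, your correction is bounded by a \emph{fixed} constant times $n^{-(r+1)}$: the best constant in $(s/(1+s))^r\le s^r-c_r(s^{r+1}\wedge s^r)$ is $c_r=1-2^{-r}<r$, and the anti-concentration (which in fact holds deterministically, since $\mathcal{C}_{n-1}\ge 1/n$ by Rayleigh monotonicity on a single surviving ray) gives a factor at most $1$, so the correction is at most $(1-2^{-r})/n^{r+1}$. Since $\mathcal{C}_n\ge 1/(n+1)$, one also has $v_n=(n+1)^r\E[\mathcal{C}_n^r]\ge 1$, so any admissible $K$ satisfies $K\ge1$; hence the correction, which is $<r/n^{r+1}<K(r+\tfrac{1}{\alpha-1})/n^{r+1}$, can never compensate the excess. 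Abstractly, a recursion of the form $v_n\le(1+a/n)v_{n-1}-b/n$ with $a>0$ implies only $v_n=O(n^a)$ unless $v_{n_0}\le b/a$ for some $n_0$, which here is impossible because $b/a<1\le v_{n_0}$. So your scheme, as written, does not yield a uniform bound.

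For comparison, the paper sidesteps the recursion entirely. It applies the Nash--Williams inequality to get the deterministic bound $\mathcal{C}_n(\mathsf{T}^{*n})\le \#\mathsf{T}^{*n}_{\lfloor n/2\rfloor}/\lfloor n/2\rfloor$, and then controls $\E[(\#\mathsf{T}^{*n}_{j})^r]$ by representing $\#\mathsf{T}^{*n}_j$ (conditionally on $\#\mathsf{T}^{(0)}_j$) as a binomial thinning with success probability $q_{n-j}$, invoking Jensen, and then calling on the moment estimate $\E[(\#\mathsf{T}^{(0)}_m)^\gamma]\le C(\gamma)q_m^{1-\gamma}$ from Fleischmann--Vatutin--Wachtel for $\gamma<\alpha$. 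That route avoids the delicate cancellation between the size-biasing factor $q_{n-1}/q_n>1$ and the concavity gain in $s\mapsto s/(1+s)$, which is exactly where your inductive argument fails.
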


\begin{proof}
We can assume $n\geq 2$, and set $j=\lfloor n/2\rfloor \geq 1$. An application of the Nash--Williams inequality \cite[Chapter 2]{LP10} gives
$$\mathcal{C}_n(\t^{*n}) \leq \frac{ \# \t^{*n}_j}{j}.$$
On the other hand,
\begin{align*}
\E\big[(\# \t^{*n}_j)^{r}\big]&= \E\big[(\#\{v\in \t^{(0)}_j\colon h(\t^{(0)}[v])\geq n-j\})^{r}\mid h(\t^{(0)})\geq n\big] \\
&= q_n^{-1} \, \E\big[(\#\{v\in \t^{(0)}_j\colon h(\t^{(0)}[v])\geq n-j\})^{r}\big].
\end{align*}
Notice that given $\#\t^{(0)}_{j}=k$, the conditional distribution of $\#\{v\in\t^{(0)}_{j} \colon h(\t^{(0)}[v])\geq n-j\}$ is the binomial distribution $\mathcal{B}(k,q_{n-j})$.
Using Jensen's inequality, we get 
\begin{eqnarray*}
\E\Big[(\#\{v\in \t^{(0)}_j\colon h(\t^{(0)}[v])\geq n-j\})^{r}\Big] &\leq& \E\Big[\E\Big[(\#\{v\in \t^{(0)}_j\colon h(\t^{(0)}[v])\geq n-j\})^{2}\mid \#\t^{(0)}_{j}\Big]^{\frac{r}{2}}\Big]\\
&=& \E\Big[ \Big(q_{n-j}^2 (\#\t^{(0)}_j)^2 + (q_{n-j}-q_{n-j}^2) \#\t^{(0)}_j\Big)^{\frac{r}{2}} \Big]\\
&\leq& q_{n-j}^{r}\,\E\big[(\#\t^{(0)}_j)^{r}\big] + (q_{n-j}-q_{n-j}^2)^{\frac{r}{2}}\E\big[(\#\t^{(0)}_j)^{\frac{r}{2}}\big].
\end{eqnarray*}
At this point, we need the following result proved in~\cite[Lemma 11]{FVW07} for the unconditioned Galton-Watson tree. For any $\gamma\in (0,\alpha)$, there is a finite constant $C(\gamma)$ such that for every $m\geq 1$,
\begin{equation}
\label{eq:moment-esti}
\E\big[(\#\t^{(0)}_{m})^{\gamma}\big]\leq C(\gamma)\,q_{m}^{1-\gamma}\,.
\end{equation}
The original statement of the latter bound in~\cite{FVW07} was given for any $\gamma \in [1,\alpha)$, while the case $\gamma\in (0,1)$ follows from the (trivial) case $\gamma=1$ by applying the H\"{o}lder inequality to 
$$\E\big[\mathbf{1}_{\{\t^{(0)}_m\neq \emptyset\}}(\# \t^{(0)}_m)^{\gamma}\big]$$
(we can in fact take $C(\gamma)=1$ for any $\gamma \in(0,1)$).

With the help of~(\ref{eq:moment-esti}), we conclude that
$$\E\big[\mathcal{C}_n(\t^{*n})^{r}\big] \leq j^{-r}q_n^{-1}\,\Big(C(r)q_{n-j}^{r}q_{j}^{1-r}+C(r/2)(q_{n-j}-q_{n-j}^{2})^{\frac{r}{2}}q_{j}^{1-\frac{r}{2}}\Big),$$
and the statement of the lemma readily follows from~(\ref{eq:survivalpro}).
\end{proof}

\subsection{Proof of Theorem \ref{thm:dim-discrete}}

Following~\cite{CLG13}, we will show 
\begin{equation}  
\label{eq:goal} 
\mathbb{E} \otimes E \big[ |\log \mu_{n}( \Sigma_{n}) + \beta \log n |\big] = o(\log n) \qquad \mbox{ as }n \to \infty,
\end{equation}
which is sufficient for establishing Theorem \ref{thm:dim-discrete}. The proof given below is adapted from~\cite[Section 4.3.2]{CLG13}. For later convenience, we introduce the notation
$$\bar \alpha \colonequals\frac{\alpha+1}{2}\in(1,\frac{3}{2})$$
and its H\"{o}lder conjugate 
$$\alpha^{*}\colonequals \frac{\bar\alpha}{\bar\alpha-1}\in (3,\infty).$$ 
  
Fix $\xi >0$. Let $ \varepsilon>0$ and $n_{0} \geq 1$ be such that the conclusion of \cref{pro:comparaison} holds for every $n\geq n_0$ with the exponent $r=\alpha^{*}$. Without loss of generality, we may and will assume that $\ve =1/N$, for some
integer $N\geq 4$, which is fixed throughout the proof. We also fix a constant $\gamma>0$, such that $\gamma \log N <1/2$.

Let $n> N$ be sufficiently large so that $N^{\lfloor \gamma \log n\rfloor}\geq n_0$. Then we let $\ell\geq 1$ be the unique integer such that $N^\ell < n \leq N^{\ell+1}$, and write 
\begin{equation}
\label{decomp-harmo}
\log \mu_n(\Sigma_n)= \log\frac{\mu_n(\Sigma_n)}{\mu_n(B(\Sigma_n,N))}
+\sum_{j=2}^\ell \log\frac{\mu_n(B(\Sigma_n,N^{j-1}))}{\mu_n(B(\Sigma_n,N^j))} + \log \mu_n(B(\Sigma_n, N^\ell)).
\end{equation}
To simplify notation, we set 
\begin{eqnarray*} 
A_{1}^n &\colonequals& \log\frac{\mu_n(\Sigma_n)}{\mu_n(B(\Sigma_n,N))} +\beta\log N,\\ 
A^n_j&\colonequals& \log\frac{\mu_n(B(\Sigma_n,N^{j-1}))}{\mu_n(B(\Sigma_n,N^j))} + \beta \log N \quad \mbox{for every }j\in\{2,\ldots,\ell\},\\
A_{\ell+1}^n &\colonequals& \log \mu_n(B(\Sigma_n, N^\ell)) + \beta \log(n/N^\ell)\,.
\end{eqnarray*} 
From \eqref{decomp-harmo}, we see that
\begin{equation}
\label{pfdiscrete1}
\mathbb{E} \otimes E \Big[ \big|\log \mu_{n}( \Sigma_{n}) + \beta \log n\big|\Big] = \mathbb{E} \otimes E\Big[ \Big|\sum_{j=1}^{\ell+1} A_{j}^n\Big|\Big]\leq  \sum_{i=1}^{\ell+1} \mathbb{E} \otimes E [|A^n_j|] .
\end{equation}
We will bound each term in the sum of the right-hand side.

\smallskip

\textsc{First step: A priori bounds.} We verify that, for every $j\in\{1,2,\ldots, \ell +1\}$,
\begin{equation}
\label{aprioribound}
\E\otimes E\big[|A^n_j|\big] \leq (C K^{1/\bar\alpha}+ \beta) \,\log N,
\end{equation}
where $C=C(\alpha^{*},\rho)$ is the constant in \cref{lem:level-size} for the exponent $r=\alpha^{*}$, and $K=K(\bar \alpha,\rho)$ is the constant in \cref{moment-conductance} for the exponent $r=\bar \alpha$. 

Suppose first that $2\leq j \leq \ell$. Using the second assertion of \cref{conditioning-subtree},  with $p=n-N^j$ and $q=N^{j-1}$, we obtain that, for every $z\in \t^{*n}_{n-N^j}$, the conditional distribution of $A^n_j$ under $P$, knowing that $\langle\Sigma_n\rangle_{n-N^j}=z$, is the same as the distribution of
$$\log \mu_{N^j}^{(\t^{*n}[z])}(B(\Sigma_{N^j}^{(\t^{*n}[z])},N^{j-1})) + \beta \log N.$$
Recalling that $\mu_{N^j}^{(\t^{*n}[z])}$ is the distribution of $\Sigma_{N^j}^{(\t^{*n}[z])}$ under $P$, we get
\begin{eqnarray} 
\label{aprioritech1}
E\big[ |A^n_j| \mid \langle\Sigma_n\rangle_{n-N^j}=z\big] 
&\leq&   E\left[\big|\log \mu_{N^j}^{(\t^{*n}[z])}\big(B(\Sigma_{N^j}^{( \t^{*n}[z])},N^{j-1})\big)\big|\right]
+ \beta \log N  \notag\\ 
& =&G_j(\t^{*n}[z]) +  \beta \log N , 
\end{eqnarray}
where for any tree $\mathsf{t}\in\mathscr{T}_{N^j}$,
$$G_j(\mathsf{t})\colonequals \int \mu^{(\mathsf{t})}_{N^j}(\mathrm{d}y)\,\big|\log \mu^{(\mathsf{t})}_{N^j}(B_{\mathsf{t}}(y, N^{j-1}))\big|= \sum_{z\in \mathsf{t}_{N^j-N^{j-1}}} \mu^{(\mathsf{t})}_{N^j}(\wt{\mathsf{t}}[z])\,\big|\log \mu^{(\mathsf{t})}_{N^j}(\wt{\mathsf{t}}[z])\big|.$$ 
As explained in \cite{CLG13}, we have the entropy bound $G_j(\mathsf{t})\leq \log \# \mathsf{t}_{N^j-N^{j-1}}$ for any tree $\mathsf{t}\in\mathscr{T}_{N^j}$. So we get from~\eqref{aprioritech1} that
\begin{align*}
\E\otimes E\big[ |A^n_j|\big]  &\leq \E\otimes E\big[\log \#\t_{N^j-N^{j-1}}^{*n}[\langle\Sigma_n\rangle_{n-N^j}]\big]+ \beta \log N\\
&\leq  (N^j +1)\, \E\left[\mathcal{C}_{N^j}(\t^{*N^j})\, \log\#\t^{*N^j}_{N^j-N^{j-1}}\right] + \beta \log N\\
&\leq (N^j+1)\, \E\left[\big(\mathcal{C}_{N^j}(\t^{*N^j})\big)^{\bar\alpha}\right]^{1/\bar\alpha}\, \E\left[\big(\log\#\t^{*N^j}_{N^j-N^{j-1}}\big)^{\alpha^{*}}\right]^{1/\alpha^{*}} 
+ \beta \log N\\
&\leq K^{1/\bar\alpha}\,\E\left[\big(\log\#\t^{*N^j}_{N^j-N^{j-1}}\big)^{\alpha^{*}}\right]^{1/\alpha^{*}} + \beta \log N,
\end{align*}
using successively \cref{lem:tree-selected}, the H\"{o}lder inequality and \cref{moment-conductance}. Finally, \cref{lem:level-size} gives
$$\E\left[\big(\log\#\t^{*N^j}_{N^j-N^{j-1}}\big)^{\alpha^{*}}\right]^{1/\alpha^{*}}\leq C\,\log N, $$
and this completes the proof of \eqref{aprioribound} when $2\leq j \leq \ell$. The cases $j=1$ and $j=\ell+1$ can be treated in a similar manner. For details we refer the reader to~\cite[Section 4.3.2]{CLG13}. 

\smallskip

\textsc{Second step: Refined bounds.} Let us prove that, if $\lfloor \gamma \log n\rfloor \leq j \leq \ell$, 
\begin{equation}
\label{refinedbound}
\E\otimes E\big[|A^n_j|\big] \leq K^{1/\bar\alpha}\xi^{1/\alpha^{*}}\log N.
\end{equation}
Recall that for $j\in\{\lfloor \gamma \log n\rfloor,\ldots,\ell\}$ we have $N^j\geq n_0$. From \eqref{aprioritech1}, we have
\begin{equation}
\label{estimtech1}
E\big[|A^n_j|\big]= E\big[F_j(\t^{*n}[\langle\Sigma_n\rangle_{n-N^j}])\big],
\end{equation}
where, if $\mathsf{t}\in\mathscr{T}_{N^j}$,
$$F_j(\mathsf{t}) \colonequals  \big|\beta \log N - G_j(\mathsf{t})\big|= \left|\int \mu^{(\mathsf{t})}_{N^j}(\mathrm{d}y)\,\left(\log \mu^{(\mathsf{t})}_{N^j}(B_{\mathsf{t}}(y, N^{j-1})) + \beta \log N\right)\right|.$$
Using \cref{lem:tree-selected} as in the first step, we have 
$$\E\otimes E\big[ |A^n_j|\big]= \mathbb{E} \otimes E\big[F_j(\t^{*n}[\langle\Sigma_n\rangle_{n-N^j}])\big] \leq  (N^j+1) \mathbb{E} \left[ \mathcal{C}_{N^j}(\t^{*N^j})\, F_{j}(\t^{*N^j}) \right].$$
We then apply the H\"{o}lder inequality together with the bound of \cref{moment-conductance} for $r=\bar\alpha$ to get
\begin{align*}
\mathbb{E} \otimes E\left[|A_{j}^n|\right] 
&\leq K^{1/\bar\alpha}\,\mathbb{E}\big[F_{j}(\t^{*N^j})^{\alpha^{*}}\big]^{1/\alpha^{*}}\\
&\leq K^{1/\bar\alpha}\,\E\bigg[\left(\int \mu_{N^j}(\mathrm{d}y)\,\left|\log \mu_{N^j}(B(y, N^{j-1})) + \beta \log N\right|\right)^{\alpha^{*}}\bigg]^{1/\alpha^{*}}\\
&\leq K^{1/\bar\alpha}\, \E\left[\int \mu_{N^j}(\mathrm{d}y)\,\left|\log \mu_{N^j}(B(y, N^{j-1})) + \beta \log N\right|^{\alpha^{*}}\right]^{1/\alpha^{*}}\\
&= K^{1/\bar\alpha} \cdot \mathbb{E} \otimes E \left[ \left|\log  \mu_{N^j}^{1/N}\Big( \langle \Sigma_{N^j}\rangle_{N^j-N^{j-1}}\Big) + \beta \log N\right|^{\alpha^{*}}\right]^{1/\alpha^{*}},
\end{align*}
where the last equality follows from the definition of the measure $\mu^\ve_n$ at the beginning of \cref{sec:conv-harmonic}. Now recall that $1/N=\ve$ and note that $N^j-N^{j-1}=N^j -\ve N^{j}$. Since we have $N^j\geq n_0$, we can apply \cref{pro:comparaison} with $r=\alpha^{*}$ and get that the right-hand side of the preceding display is bounded above by $K^{1/\bar\alpha} \xi^{1/\alpha^{*}}\log N$, which finishes the proof of \eqref{refinedbound}. 

\smallskip
By combining \eqref{aprioribound} and \eqref{refinedbound}, and using \eqref{pfdiscrete1}, we arrive at the bound
\begin{align*}
\E\otimes E\left[ \big |\log \mu_n(\Sigma_n) + \beta \log n \big|\right]
&\leq \lfloor \gamma\log n\rfloor (K^{1/\bar\alpha}C +\beta)\log N + \ell\,K^{1/\bar\alpha} \xi^{1/\alpha^{*}}\log N\\
&\leq \big(\gamma (K^{1/\bar\alpha}C +\beta)\log N +K^{1/\bar\alpha} \xi^{1/\alpha^{*}}\big)\log n,
\end{align*}
which holds for every sufficiently large $n$. By choosing $\xi$ and then $\gamma$ arbitrarily small, we see that our claim \eqref{eq:goal} follows from the last bound, and this completes the proof of~\cref{thm:dim-discrete}.

\section{Comments and questions}
Following~\cite[Section 5.2]{CLG13}, let us consider the supercritical offspring distribution $\theta_{\alpha}^{(n)}$ of index $\alpha \in (1,2]$, defined as $ \theta_{\alpha}^{(n)}(1) = 1- \frac{1}{n}$ and
\begin{displaymath}
\theta_{\alpha}^{(n)}(k)= \frac{1}{n}\theta_{\alpha}(k) \quad \mbox{ for every }k\geq 2.
\end{displaymath} 
We let $\mathrm{T}_{\alpha}^{(n)}$ be an infinite Galton-Watson tree with offspring distribution $\theta_{\alpha}^{(n)}$, then $n^{-1}\mathrm{T}_{\alpha}^{(n)}$ viewed as a metric space with the graph distance rescaled by factor $n^{-1}$, converges in distribution in an appropriate sense (e.g.~for the local Gromov-Hausdorff topology) to the CTGW tree $\Gamma\a$, as $n\to \infty$.
 
Consider then the biased random walk $(Z^{(n)}_{k})_{k \geq 0}$ on $\mathrm{T}_{\alpha}^{(n)}$ with bias parameter $\lambda^{(n)} = 1- \frac{1}{n}$ towards root (see \cite{LPP96} or~\cite{Aid11} for a precise definition of this process). Then the rescaled process 
$$\Big(n^{-1}Z^{(n)}_{ \lfloor n^2 t\rfloor}\Big)_{t\geq 0}$$
will converge in distribution, as $n\to \infty$, to Brownian motion $(W(t))_{t\geq 0}$ with drift $1/2$ on the CTGW tree $\Gamma\a$, in a sense that can easily be made precise. Furthermore, the rescaled conductance $n\,\cc(\mathrm{T}_{\alpha}^{(n)}, \lambda^{(n)})$ converges in distribution to the conductance $\cc^{(\alpha)}=\cc(\Gamma\a)$. 

Following this informal passage to the limit, we can find heuristically a candidate for the limit of $n \mathbf{V}_{\alpha}^{(n)}$ as $n \to \infty$, where $\mathbf{V}_{\alpha}^{(n)}$ stands for the speed of the biased random walk $Z^{(n)}$ on $\mathrm{T}_{\alpha}^{(n)}$. One can either directly employ an explicit formula of $\mathbf{V}_{\alpha}^{(n)}$ stated in~\cite[Theorem 1.1]{Aid11}, or use the invariant measure for the environment seen from the random walker (\cite[Theorem 4.1]{Aid11}) to calculate the speed as the proportion of last-exit points. Both methods give rise to the following quantity which should be interpreted as the speed of Brownian motion $W$ with drift $1/2$ on $\Gamma\a$,
\begin{equation}
\label{eq:speed}
\mathbf{V}_{\alpha}\colonequals \frac{\E\Big[\frac{\cc^{(\alpha)}_{0}\cc^{(\alpha)}_{1}}{\cc^{(\alpha)}_{0}+\cc^{(\alpha)}_{1}-1}\Big]}{\E\Big[\frac{2\cc^{(\alpha)}_{0}}{\cc^{(\alpha)}_{0}+\cc^{(\alpha)}_{1}-1}\Big]}\,,
\end{equation}
where $\cc^{(\alpha)}_{0}$ and $\cc^{(\alpha)}_{1}$ are two independent copies of $\cc^{(\alpha)}$ under the probability measure $\P$. 

Since the conductance $\cc^{(\alpha)}$ is a.s.~strictly larger than 1, we see immediately from~(\ref{eq:speed}) that $\mathbf{V}_{\alpha}< \frac{1}{2}$ for any $\alpha \in (1,2]$. On the other hand, according to the coupling explained in~\cref{sec:coupling}, the denominator of the right-hand side of~(\ref{eq:speed}) 
\begin{displaymath}
\E\bigg[\frac{2\cc^{(\alpha)}_{0}}{\cc^{(\alpha)}_{0}+\cc^{(\alpha)}_{1}-1}\bigg]=\E\bigg[\frac{\cc^{(\alpha)}_{0}+\cc^{(\alpha)}_{1}}{\cc^{(\alpha)}_{0}+\cc^{(\alpha)}_{1}-1}\bigg]=1+\E\bigg[\frac{1}{\cc^{(\alpha)}_{0}+\cc^{(\alpha)}_{1}-1}\bigg]
\end{displaymath}
is increasing with respect to $\alpha$.

\smallskip
\noindent {\bf Question 1.} \textit{If we apply the coupling explained in~\cref{sec:coupling}, does the derivative $\frac{\mathrm{d}}{\mathrm{d}\alpha}\cc^{(\alpha)}$ of the conductance with respect to $\alpha$ exist almost surely?}
\smallskip

An affirmative answer to Question 1 would allow us to take the derivative of the numerator in~(\ref{eq:speed}) with respect to $\alpha$, and to see that 
\begin{displaymath}
\frac{\mathrm{d}}{\mathrm{d}\alpha}\E\Big[\frac{\cc^{(\alpha)}_{0}\cc^{(\alpha)}_{1}}{\cc^{(\alpha)}_{0}+\cc^{(\alpha)}_{1}-1}\Big] = \E\bigg[\frac{\cc^{(\alpha)}_{0}(\cc^{(\alpha)}_{0}-1)\frac{\mathrm{d}}{\mathrm{d}\alpha}\cc^{(\alpha)}_{1}+\cc^{(\alpha)}_{1}(\cc^{(\alpha)}_{1}-1)\frac{\mathrm{d}}{\mathrm{d}\alpha}\cc^{(\alpha)}_{0}}{\big(\cc^{(\alpha)}_{0}+\cc^{(\alpha)}_{1}-1\big)^{2}}\bigg] \leq 0 
\end{displaymath}
because a.s.~$\frac{\mathrm{d}}{\mathrm{d}\alpha}\cc^{(\alpha)}\leq 0$. Hence, the numerator in the right-hand side of~(\ref{eq:speed}) would be decreasing with respect to $\alpha$, and so would be the speed $\mathbf{V}_{\alpha}$.

\smallskip
\noindent {\bf Question 2.} \textit{Does the speed $\mathbf{V}_{\alpha}$ decrease with respect to~$\alpha$?}

\smallskip
A similar question was raised in~\cite{BFS11}, concerning the monotonicity of the speed with respect to the offspring distribution for biased random walk on Galton-Watson trees with no leaves. It has been proved in~\cite{MSZ13} that this monotonicity holds for high values of bias.

\smallskip

Finally, we also want to ask the same question for the Hausdorff dimension of the continuous harmonic measure. 

\smallskip
\noindent {\bf Question 3.} \textit{Does the Hausdorff dimension $\beta_{\alpha}$ decrease with respect to~$\alpha$?}

\section{Appendix: proofs postponed from Section~\ref{sec:second-appr-conti}}
\label{sec:appendix}
\renewcommand{\t}{\mathcal{T}}

Before starting the proofs, we state first a useful ``spine'' decomposition of the CTGW tree $\Gamma\a$ for $\alpha \in(1,2]$, which is a reformulation of the standard results about the size-biased Galton-Watson trees, see e.g.~\cite{CRW91}. Recall that $m_{\alpha}=\frac{\alpha}{\alpha-1}$ is the mean of the $\alpha$-offspring distribution~$\theta_{\alpha}$. The size-biased $\alpha$-offspring distribution $\widehat \theta_{\alpha}$ is then defined as
\begin{equation}
\label{eq:size-bias-theta}
\widehat \theta_{\alpha}(k)=\frac{(k+1)\theta_{\alpha}(k+1)}{m_{\alpha}}\quad\mbox{ for every }k\geq1.
\end{equation}

We take $\t=(\Pi,(z_v)_{v\in\Pi})\in \mathbb{T}$. If $\llbracket 0, x\rrbracket$ denotes the geodesic segment in $\t$ between the root and $x$, we can define the subtrees of~$\t$ branching off $\llbracket 0, x\rrbracket$. To this end, set $n_x=|v_x|$ and let $v_{x,0}=\varnothing, v_{x,1},\ldots, v_{x,n_x}=v_x$ be the successive ancestors of $v_x$ from generation $0$ to generation $n_x$. For every $1\leq i\leq n_x$ set $r_{x,i}= z_{v_{x,i-1}}$, and write $k_{x,i}=k_{v_{x,i-1}}-1$ as the number of siblings of $v_{x,i}$ in $\Pi$.  Then, for every $1\leq i\leq n_x$ and $1\leq j\leq k_{x,i}$, the $j$-th subtree branching off the ancestral line $\llbracket 0, x\rrbracket$ at $v_{x,i-1}$, which is denoted by $\t_{x,i,j}$, corresponds to the pair
$$\left(\Pi[\tilde v_{x,i,j}], (z_{\tilde v_{x,i,j}v}- r_{x,i})_{v \in \Pi[\tilde v_{x,i,j}] }\right),$$
where $\tilde v_{x,i,j}$ is the $j$-th child of $v_{x,i-1}$ different from $v_{x,i}$. To simplify notation, we introduce the point measure 
\begin{displaymath}
\xi_{r,x}(\t)=\sum\limits_{i=1}^{n_{x}}\sum\limits_{j=1}^{k_{x,i}}\delta_{(r_{x,i},\t_{x,i,j})},
\end{displaymath}
which belongs to the set $\mathcal{M}_{p}(\R_{+}\times \T)$ of all finite point measures on $\R_{+}\times \T$.

\begin{figure}[!h]
\begin{center}
\includegraphics[width=6.5cm]{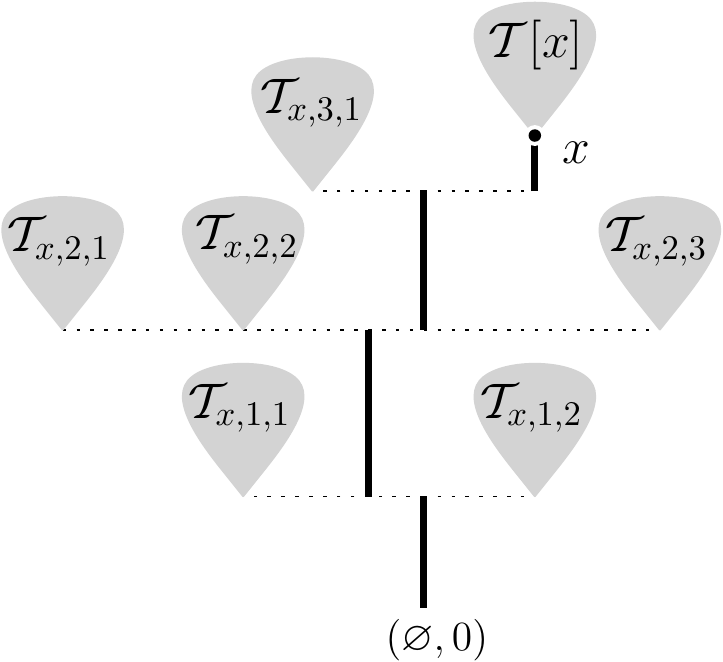}
\caption{Schematic representation of the spine decomposition}
\end{center}
\end{figure}

\begin{lemma}
\label{spine-decomposition}
Fix $\alpha \in (1,2]$. Let $F$ be a nonnegative measurable function on $\T$, and let $H$ be a nonnegative measurable function on $\mathcal{M}_p(\R_+\times \T)$. For $r>0$,
$$\E\Bigg[ \sum_{x\in\Gamma\a_r} F\big(\Gamma^{(\alpha)}[x]\big)\,H\big(\xi_{r,x}(\Gamma\a )\big)\Bigg]
= e^{(m_{\alpha}-1)r}\,\E\big[F\big(\Gamma^{(\alpha)}\big)\big] \times \E\bigg[H\bigg(\sum\limits_{i\in I}\sum\limits_{j=1}^{k_{i}}\delta_{(s_{i},\t_{i,j})}\bigg)\bigg],$$
where we assume that, under the probability measure~$\P$, 
$$\mathcal{N}_{\alpha} \colonequals \sum\limits_{i\in I}\delta_{(s_{i},k_{i},(\t_{i,j},j\geq 1))}$$
is a Poisson point measure on $\R_+\times\N\times \T^{\N}$ with intensity 
$$m_{\alpha}{\bf 1}_{[0,r]}(s)\mathrm{d}s\,\widehat\theta_{\alpha}(\mathrm{d}k)\prod_{j=1}^{\infty} \Theta_{\alpha}(\mathrm{d} \t_{j}).$$ 
\end{lemma}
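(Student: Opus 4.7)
The plan is to recognise the statement as an instance of the size-biased spine decomposition of Galton--Watson trees (\cite{CRW91}), transferred to the continuous-time setting of $\Gamma^{(\alpha)}$. By a monotone-class argument, it suffices to verify the identity for bounded $F$ and $H$ that depend only on the restrictions of their arguments up to some fixed finite height $R>0$.

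For a fixed vertex $x\in\Gamma^{(\alpha)}_r$, record the spine data along the ancestral line $\llbracket 0,x\rrbracket$, namely $(n_x,(r_{x,i})_{1\leq i\leq n_x},(k_{x,i})_{1\leq i\leq n_x})$ together with the discrete labels of the spine vertices. The branching property of $\Gamma^{(\alpha)}$ ensures that, conditionally on these spine data, the subtree $\Gamma^{(\alpha)}[x]$ and the $k_{x,i}$ sibling subtrees $\mathcal{T}_{x,i,j}$ (for $1\leq i\leq n_x$, $1\leq j\leq k_{x,i}$) are mutually independent, each with law $\Theta_\alpha$. Consequently, taking expectations factorises the integrand: $\mathbb{E}[F(\Gamma^{(\alpha)}[x])\,H(\xi_{r,x}(\Gamma^{(\alpha)}))\mid\text{spine}]$ equals $\mathbb{E}[F(\Gamma^{(\alpha)})]$ times the integral of $H\big(\sum_{i,j}\delta_{(r_{x,i},\mathcal{T}_{i,j})}\big)$ with respect to the product of i.i.d.\ $\Theta_\alpha$-laws on the sibling trees, evaluated at the realised spine.

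The remaining task is to identify the joint law of the spine data summed over $x\in\Gamma^{(\alpha)}_r$. Fix $n\geq 0$, ordered times $0<t_1<\cdots<t_n<r$, children counts $K_1,\ldots,K_n\geq 2$, and discrete labels $(v_1,\ldots,v_n)$ with $1\leq v_i\leq K_i$. A direct calculation using the independent unit-rate exponential edge lifetimes and the $\theta_\alpha$-distributed children counts shows that the expected contribution to $\sum_{x\in\Gamma^{(\alpha)}_r}$ of the vertex realising this specific labelled spine has density $e^{-r}\prod_{i=1}^{n}\theta_\alpha(K_i)$ with respect to $\mathrm{d}t_1\cdots\mathrm{d}t_n$; the factor $e^{-r}$ is the telescoping product $e^{-t_1}e^{-(t_2-t_1)}\cdots e^{-(t_n-t_{n-1})}$ of exponential densities together with the survival probability $e^{-(r-t_n)}$ of the final edge past height $r$. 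Summing the labels $v_1,\ldots,v_n$ produces $\prod_{i=1}^{n}K_i$, and the size-bias identity $K\theta_\alpha(K)=m_\alpha\widehat\theta_\alpha(K-1)$ converts $\prod_{i=1}^{n}K_i\theta_\alpha(K_i)$ into $m_\alpha^n\prod_{i=1}^{n}\widehat\theta_\alpha(K_i-1)$. Combining with the factorisation from the previous step, the total expectation reorganises into $e^{(m_\alpha-1)r}\,\mathbb{E}[F(\Gamma^{(\alpha)})]$ times the expectation of $H\big(\sum_i\sum_{j=1}^{k_i}\delta_{(s_i,\mathcal{T}_{i,j})}\big)$ under the Poisson point measure of intensity $m_\alpha\,\mathbf{1}_{[0,r]}(s)\,\mathrm{d}s\,\widehat\theta_\alpha(\mathrm{d}k)\prod_j\Theta_\alpha(\mathrm{d}\mathcal{T}_j)$ on $\mathbb{R}_+\times\mathbb{N}\times\mathbb{T}^{\mathbb{N}}$, which is precisely the claim.

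The main obstacle is bookkeeping rather than conceptual. One has to match the combinatorial factor $\prod_iK_i$, coming from the choice of which child continues the spine at each branching, with the size-biasing identity for $\theta_\alpha$, and verify that summing the integrals over the ordered simplices $\{0<t_1<\cdots<t_n<r\}$ (together with the Poisson normalisation $e^{-m_\alpha r}(m_\alpha r)^n/n!$) aggregates correctly into the prefactor $e^{(m_\alpha-1)r}=\mathbb{E}[\#\Gamma^{(\alpha)}_r]$ without any overcounting of the orderings.
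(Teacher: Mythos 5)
The paper does not give a proof of this lemma; it simply records it as a reformulation of the size-biased spine decomposition of Galton--Watson trees from~\cite{CRW91}, adapted to the continuous-time setting. Your proposal reconstructs precisely that standard argument, and the bookkeeping checks out: the product of the exponential edge densities along the spine telescopes to $e^{-r}$ (the first $n_x$ densities $e^{-(t_i-t_{i-1})}$ together with the survival probability $e^{-(r-t_{n_x})}$ of the last edge), summing over the $v_i\in\{1,\dots,K_i\}$ child choices produces $\prod_i K_i$, and the identity $K\theta_\alpha(K)=m_\alpha\widehat\theta_\alpha(K-1)$ that follows directly from the definition $\widehat\theta_\alpha(k)=(k+1)\theta_\alpha(k+1)/m_\alpha$ then turns $e^{-r}\prod_i K_i\theta_\alpha(K_i)$ into $e^{-r}m_\alpha^{n}\prod_i\widehat\theta_\alpha(K_i-1)$. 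Comparing with the expansion of the Poisson expectation over the ordered simplices $\{0<t_1<\cdots<t_n<r\}$, the normalisation $e^{-m_\alpha r}$ appears on the Poisson side, so the ratio is $e^{m_\alpha r}e^{-r}=e^{(m_\alpha-1)r}$ as required. The branching property of $\Gamma^{(\alpha)}$ does indeed make $\Gamma^{(\alpha)}[x]$ and the sibling subtrees conditionally i.i.d.\ with law $\Theta_\alpha$ given the spine, which gives the factorisation $\mathbb{E}[F(\Gamma^{(\alpha)})]\cdot\mathbb{E}[H(\cdot)]$, and the reduction to bounded height-$R$-measurable $F,H$ is the usual monotone-class step. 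This is a correct, self-contained proof of the lemma; since the paper omits the proof entirely, your argument supplies exactly the missing details along the lines of the cited reference.
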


From now on we fix the stable index $\alpha \in(1,2]$. Unless otherwise specified, we will omit the superscripts and subscripts concerning $\alpha$ in the following proofs.

\subsection{Proof of Proposition~\ref{prop:tree-selected-law}}

Let $F$ be a nonnegative measurable function on $\T$, and consider the quantity
\begin{equation}
\label{law-selected}
I_r \colonequals \E\otimes E\big[F(\Gamma\langle r\rangle )\big]=\E\otimes E\Bigg[ \sum_{x\in \Gamma_r} F(\Gamma[x])\,{\bf 1}_{\{x\prec W_\infty\}}\Bigg],
\end{equation}
where the notation $\E\otimes E$ means that we consider the expectation first under the probability measure $P$ (under which the Brownian motion $W$ is defined) and then under $\P$.

Let us fix $x\in\Gamma_r$ and $R>r$. We write $\wt\Gamma[x] \colonequals \{y\in \Gamma \colon x\prec y\}$ to denote the subset of $\Gamma$ composed of all descendants of $x$ in $\Gamma$. Define
$$\Gamma^{x,R}\colonequals \{y\in \Gamma\backslash \wt\Gamma[x] \colon H(y)\leq R\} \cup \wt\Gamma [x].$$

Let $W^{x,R}$ be Brownian motion with drift $1/2$ on $\Gamma^{x,R}$. We assume that $W^{x,R}$ is reflected both at the root and at the leaves of $\Gamma^{x,R}$, which are the points $y$ of $\Gamma\backslash \wt\Gamma[x]$ such that $H(y)=R$. Write $(\ell^{x,R}_t)_{t\geq 0}$ for the local time process of $W^{x,R}$ at $x$. From excursion theory, $\ell^{x,R}_\infty$ has an exponential distribution with parameter $\cc(\Gamma[x])/2$. For details, we refer the reader to~\cite[Section 3.1]{CLG13}. 

We then consider for every $a\in[0,r]$ the local time process $(L^{a,R}_t)_{t\geq 0}$ of $W^{x,R}$ at the unique point of $\llbracket 0, x\rrbracket$ at distance $a$ from the root. Note in particular that $L^{r,R}_t= \ell^{x,R}_t$. As a consequence of a classical Ray-Knight theorem, conditionally on $\ell^{x,R}_\infty=\ell$, the process $(L^{r-a,R}_\infty)_{0\leq a\leq r}$ is distributed as the process $(X_a)_{0\leq a\leq r}$ which solves the stochastic differential equation
\begin{equation}
\label{EDSRK}
\left\{\begin{array}{l}
\mathrm{d}X_a = 2\sqrt{X_a} \mathrm{d}\eta_a + (2- X_a)\mathrm{d}a \\
X_0=\ell
\end{array}
\right.
\end{equation}
where $(\eta_a)_{a\geq 0}$ is a standard linear Brownian motion. In what follows, we will write $P_\ell$ for the probability measure under which the process $X$ starts from $\ell$, and $P_{(c)}$ for the probability measure under which the process $X$ starts with an exponential distribution with parameter $c/2$. 

Now write $\{x_i, 1\leq i\leq n_{x}\}$ for the branching points of $\Gamma^{x,R}$ (or equivalently of $\Gamma$) that belong to $\llbracket 0,x\rrbracket$, and set $a_i=H(x_i)$ for $1\leq i\leq n_{x}$. We denote by $k_{i}$ the number of subtrees branching off $\llbracket 0,x\rrbracket$ at
$x_i$, and write 
$$(\Gamma^{x,R}_{i,j})_{1\leq j\leq k_{i}}$$ 
for the finite subtrees of $\Gamma^{x,R}$ that branch off $\llbracket 0,x\rrbracket$ at $x_i$. See \cref{figure:treecutoff} for an illustration.

\begin{figure}[!h]
\begin{center}
\includegraphics[width=12cm]{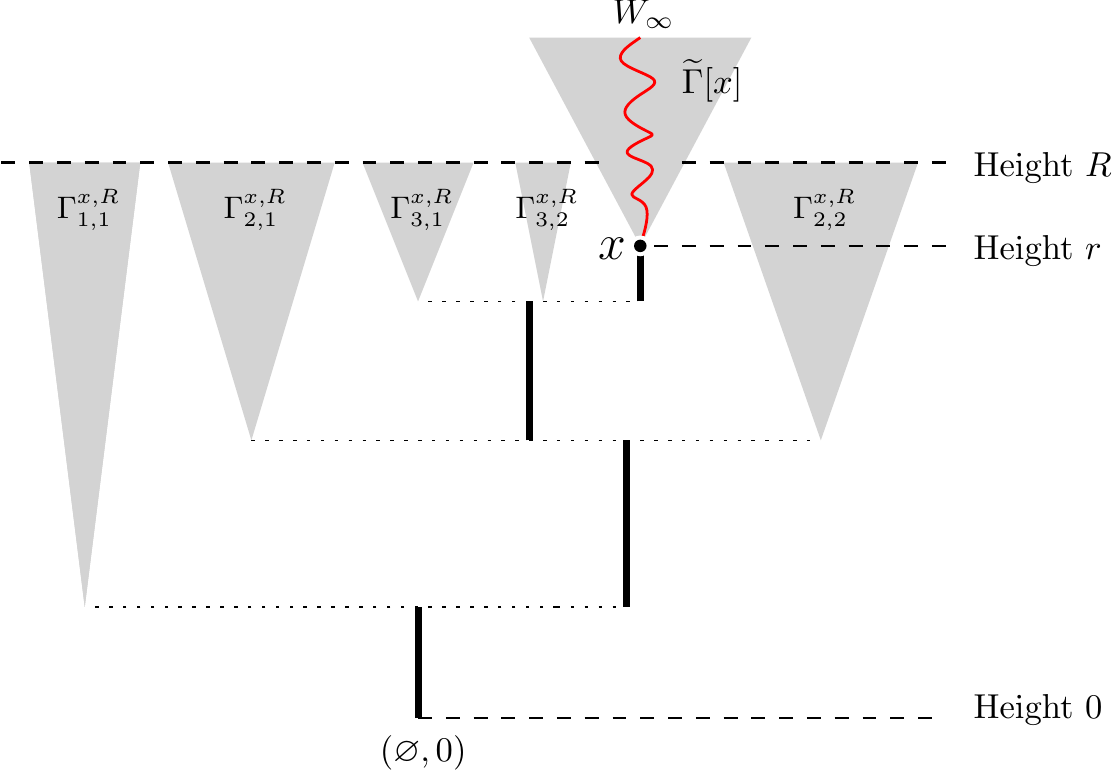}
\caption{The infinite tree $\Gamma^{x,R}$ and the finite subtrees that branch off $\llbracket 0,x\rrbracket$ \label{figure:treecutoff}}
\end{center}
\end{figure}

Let $A_{x,R}$ be the event that $W^{x,R}$ never hits the leaves of $\Gamma^{x,R}$, or equivalently that $W^{x,R}$ escapes to infinity in $\wt\Gamma[x]$ before hitting any leaf of $\Gamma^{x,R}$. Excursion theory
shows that 
$$P(A_{x,R}\mid (L^{a,R}_\infty)_{0\leq a\leq r})= \exp\Big(- \frac{1}{2}\sum_{i=1}^{n_{x}}\sum_{j=1}^{k_{i}}\cc(\Gamma^{x,R}_{i,j})\,L^{a_i,R}_\infty\Big),$$
where  $\cc(\Gamma^{x,R}_{i,j})$ refers to the conductance of $\Gamma^{x,R}_{i,j}$ between its root $x_i$ and the set of its leaves (this conductance is defined by an easy adaptation of the definition given at the beginning of \cref{sec:conductance}).

From the preceding observations, we have thus
\begin{eqnarray}
P(A_{x,R})&=& E\Big[\exp\Big(-\frac{1}{2}\sum_{i=1}^{n_{x}}\sum_{j=1}^{k_{i}}\cc(\Gamma^{x,R}_{i,j})\,L^{a_i,R}_\infty\Big)\Big]\notag\\
&=&
\label{escape-proba}
E_{(\cc(\Gamma[x]))}\Big[\exp\Big(-\frac{1}{2}\sum_{i=1}^{n_{x}}\sum_{j=1}^{k_{i}}\cc(\Gamma^{x,R}_{i,j})\,X_{r-a_i}\Big)\Big].
\end{eqnarray}
At this point, we let $R$ tend to infinity. It is easy to verify that $P(A_{x,R})$ increases to $P(A_x)$, where $A_x\colonequals \{x\prec W_\infty\}$. Furthermore, for every $i\in\{1,\ldots,n_{x}\}$ and $j \in \{1,\ldots, k_{i}\}$, $\cc(\Gamma^{x,R}_{i,j})$ decreases to $\cc(\Gamma_{x,i,j})$, where $\Gamma_{x,i,j}$ is the $j$-th subtree of $\Gamma$ branching off
$\llbracket 0,x\rrbracket$ at $x_i$. Consequently, we obtain 
$$P(x\prec W_\infty) = E_{(\cc(\Gamma[x]))}\Big[\exp\Big(-\frac{1}{2}\sum_{i=1}^{n_{x}}\sum_{j=1}^{k_{i}} \cc(\Gamma_{x,i,j})\,X_{r-a_i}\Big)\Big].$$

We can now return to the computation of the quantity $I_r$ defined in \eqref{law-selected}. 
\begin{align*}
I_r&=\E\Big[ \sum_{x\in \Gamma_r} F(\Gamma[x])\,P(x\prec W_\infty)\Big]\\&
= \E\Big[\sum_{x\in \Gamma_r} F(\Gamma[x]) 
E_{(\cc(\Gamma[x]))}\Big[\exp\Big(-\frac{1}{2}\sum_{i=1}^{n_{x}}\sum_{j=1}^{k_{i}} \cc(\Gamma_{x,i,j})\,X_{r-a_i}\Big)\Big]\Big].
\end{align*}
Note that the quantity inside the sum over $x\in \Gamma_r$ is a function of $\Gamma[x]$ and of the subtrees of $\Gamma$ branching off the segment $\llbracket 0,x\rrbracket$. We can thus apply \cref{spine-decomposition} to get
$$I_r=e^{(m-1)r} \int \Theta(\mathrm{d}\t)\,F(\t)\,E_{(\cc(\t))}\bigg[ \mathbf{E}\Big[\exp\Big(-\frac{1}{2}\int \n\Big(\mathrm{d}s\,\mathrm{d} k\prod_{j=1}^{\infty}\mathrm{d}\t_{j}\Big)\,\sum\limits_{j=1}^{k}\cc(\t_{j})\,X_{r-s}\Big)\Big]\bigg],$$
where the constant $m=\frac{\alpha}{\alpha-1}$ is the mean of $\alpha$-offspring distribution $\theta$. Under the probability measure $\mathbf{P}$, the random measure $\n$ is a Poisson point measure on $\mathbb{R}_{+}\times \N\times \T^{\N}$ with intensity 
$$m{\bf 1}_{[0,r]}(s)\mathrm{d}s\,\widehat \theta(\mathrm{d}k)\prod_{j=1}^{\infty}\Theta(\mathrm{d}\t_{j}),$$
where the size-biased offspring distribution $\widehat\theta$ is defined by~(\ref{eq:size-bias-theta}).

Now we can use the exponential formula for Poisson measures to arrive at
\begin{eqnarray}
I_r &=&e^{(m-1)r} \int \Theta(\mathrm{d}\t)\,F(\t)\,E_{(\cc(\t))}\bigg[\exp\Big(-m\int_0^r \mathrm{d}s\,\sum\limits_{k=1}^{\infty}\tilde\theta(k)\big(1-\varphi(X_s)^{k}\big)\Big)\bigg]\notag\\
\label{firststepeq}
&=& \int \Theta(\mathrm{d}\t)\,F(\t)\,E_{(\cc(\t))}\bigg[\exp-\int_0^r \mathrm{d}s\,\Big(1-m\sum\limits_{k=1}^{\infty}\tilde\theta(k)\varphi(X_s)^{k}\Big)\bigg],
\end{eqnarray}
where we recall that for every $x\geq 0$,
$$\varphi(x)=\E[\exp(-x\,\mathcal{C}/2)]= \Theta\big(\exp(-x\,\cc(\t)/2)\big)$$
is the Laplace transform (evaluated at $x/2$) of the distribution of the conductance $\mathcal{C}(\Gamma)$. Observe that for any $r\in (0,1)$, the identity 
\begin{displaymath}
\sum\limits_{k=1}^{\infty}\widehat \theta(k)r^{k}=1-(1-r)^{\alpha-1}
\end{displaymath}
follows by differentiating~(\ref{eq:gene-fct}). Applying this to~(\ref{firststepeq}), we have thus proved Proposition~\ref{prop:tree-selected-law}.

\subsection{Proof of Proposition~\ref{invariant-meas}}
In order to study the asymptotic behavior of $\Phi_r$ when $r$ tends to $+\infty$, we first observe that, in terms of the law $\gamma(\mathrm{d}s)$ of $\mathcal{C}(\Gamma)$, we have
$$\varphi(\ell) = \int_{[1,\infty)} e^{-\ell s/2}\,\gamma(\mathrm{d}s)\ ,\quad \varphi'(\ell) = -\,\frac{1}{2} \int_{[1,\infty)} s\,e^{-\ell s/2}\,\gamma(\mathrm{d}s).$$
It follows that $\varphi(\ell)\leq e^{-\ell/2}$ and $|\varphi'(\ell)|\leq \frac{1}{2}(\int s\gamma(\mathrm{d}s))e^{-\ell/2}$. 
By differentiating \eqref{eq:laplace}, we have
\begin{equation}
\label{eq:edphi}
2\ell\,\varphi'''(\ell) + (2+\ell) \varphi''(\ell) + \frac{\alpha}{\alpha-1}\varphi'(\ell)\big(1-(1-\varphi(\ell))^{\alpha-1}\big)=0.
\end{equation}

\begin{lemma}
\label{asympto-density}
For every $\ell\geq 0$,
$$\lim_{r\to\infty} E_\ell\Big[ \exp-\int_0^r \mathrm{d}s\,\Big(m\big(1-\varphi(X_s)\big)^{\alpha-1}-\frac{1}{\alpha-1}\Big)\Big] = -\,\frac{\varphi'(\ell) e^{\ell/2}}{\int_0^\infty \mathrm{d}s\,\varphi'(s)^2\,e^{s/2}}.$$
Additionally, there exists a constant $A<\infty$ such that, for every $\ell\geq 0$ and $r>0$,
$$E_\ell\Big[ \exp-\int_0^r \mathrm{d}s\,\Big(m\big(1-\varphi(X_s)\big)^{\alpha-1}-\frac{1}{\alpha-1}\Big)\Big]\leq A.$$
\end{lemma}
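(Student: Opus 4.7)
The plan is to reduce the statement to a standard Feynman--Kac / Doob $h$-transform argument, with eigenfunction $h(\ell) := -\varphi'(\ell)\,e^{\ell/2}$ and eigenvalue given by the potential $V(\ell) := m_{\alpha}(1-\varphi(\ell))^{\alpha-1} - \frac{1}{\alpha-1}$. Note that $h>0$ on $[0,\infty)$ since $\varphi'<0$. The first step is a direct computation: using the ODE~\eqref{eq:edphi} obtained by differentiating~\eqref{eq:laplace}, one checks that
\[
(\mathcal{L}h)(\ell) = V(\ell)\,h(\ell), \qquad \mathcal{L} := 2x\,\partial_x^2 + (2-x)\,\partial_x,
\]
where $\mathcal{L}$ is the generator of the SDE for $X$. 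By It\^o's formula, $M_t := h(X_t)\exp(-\int_0^t V(X_s)\,\mathrm{d}s)$ is then a nonnegative local $P_\ell$-martingale; the global bounds $0\leq h\leq h(0) = \tfrac12\E[\mathcal{C}\a]$ and $V\geq -\tfrac{1}{\alpha-1}$ promote it to a true martingale, so that
\begin{equation}
\label{eq:sketchFK}
E_\ell\Big[h(X_r)\exp\!\Big(-\!\int_0^r\! V(X_s)\,\mathrm{d}s\Big)\Big] = h(\ell), \qquad \ell\geq 0,\ r\geq 0.
\end{equation}

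Next I would apply the Doob $h$-transform: under the measure $\mathrm{d}\mathbb{Q}_\ell := (M_r/h(\ell))\,\mathrm{d}P_\ell$ on $\mathcal{F}_r$, the canonical process $\tilde X$ satisfies the modified SDE $\mathrm{d}\tilde X = 2\sqrt{\tilde X}\,\mathrm{d}\tilde\eta + \bigl((2-\tilde X) + 4\tilde X\,h'(\tilde X)/h(\tilde X)\bigr)\,\mathrm{d}s$. An adjoint calculation shows that the original diffusion $X$ is reversible with respect to $\mu(\mathrm{d}x) = \tfrac12 e^{-x/2}\,\mathrm{d}x$, so general principles give that $\tilde X$ is reversible with respect to $h(x)^2\,\mu(\mathrm{d}x)$, whose total mass is $\int_0^\infty \varphi'(x)^2 e^{x/2}\,\mathrm{d}x = C_1(\alpha)/2$ by the very definition of $C_1(\alpha)$. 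Hence $\tilde X$ is positive recurrent with normalized invariant distribution $\pi^{\mathbb{Q}}(\mathrm{d}x) = \varphi'(x)^2 e^{x/2}/C_1(\alpha)\,\mathrm{d}x$, and by a standard irreducibility argument for one-dimensional diffusions it is ergodic.

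Rewriting~\eqref{eq:sketchFK} as
\[
E_\ell\Big[\exp\!\Big(-\!\int_0^r\! V(X_s)\,\mathrm{d}s\Big)\Big] = h(\ell)\,E^{\mathbb{Q}}_\ell\!\Big[\tfrac{1}{h(\tilde X_r)}\Big],
\]
the limit assertion follows from the ergodic convergence $E^{\mathbb{Q}}_\ell[1/h(\tilde X_r)] \to \int 1/h\,\mathrm{d}\pi^{\mathbb{Q}}$ together with the clean computation
\[
\int_0^\infty \frac{1}{h(x)}\,\pi^{\mathbb{Q}}(\mathrm{d}x) = -\frac{1}{C_1(\alpha)}\int_0^\infty \varphi'(x)\,\mathrm{d}x = \frac{\varphi(0)-\varphi(\infty)}{C_1(\alpha)} = \frac{1}{C_1(\alpha)},
\]
which gives the stated pointwise limit $-\varphi'(\ell)e^{\ell/2}/C_1(\alpha)$.

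The subtle point is the uniform bound by a constant $A$ that is free of both $\ell$ and $r$: since $1/h(x)\asymp e^{x/2}/|\varphi'(x)|$ as $x\to\infty$, the function $1/h$ is unbounded, so one cannot simply invoke $L^\infty$ bounds in the previous display. I expect this to be the main obstacle. The plan to overcome it is to use the strong mean-reverting drift of $\tilde X$: the asymptotics $h(\ell)\sim f_\alpha(1)/\ell$ as $\ell\to\infty$ (where $f_\alpha$ is the continuous density of $\gamma_\alpha$ from Proposition~\ref{prop:unique}) show that $\log(1/h)$ grows only linearly in $x$, while the additional drift term $4x\,h'/h$ behaves like $-2x$ plus lower order for large $x$, so that $\tilde X$ inherits sufficient mean reversion to build a Lyapunov function $W$ with $W\geq 1/h$ and $\mathcal{L}^{\mathbb{Q}} W \leq C - \lambda W$; this yields $E^{\mathbb{Q}}_\ell[W(\tilde X_r)]\leq W(\ell) + C/\lambda$ and, combined with the trivial bound $h(\ell)W(\ell)\leq \mathrm{const}$ obtained from the explicit form of $h$, the required uniform estimate $h(\ell)\,E^{\mathbb{Q}}_\ell[1/h(\tilde X_r)]\leq A$.
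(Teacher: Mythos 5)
Your route is essentially the same as the paper's: identify $h(\ell)=-\varphi'(\ell)e^{\ell/2}$ as an eigenfunction of $\mathcal{L}$ via the ODE~\eqref{eq:edphi}, set up the Feynman--Kac martingale $M_t$, perform the Doob $h$-transform (the paper does this through Girsanov and an explicit SDE rather than through reversibility, but the two formulations are equivalent), identify the $h$-transformed invariant density proportional to $\varphi'(x)^2e^{x/2}$, and pass to the ergodic limit of the test function $1/h$. The paper also handles the unbounded test function by a monotone-approximation argument rather than a Lyapunov function, and simply cites~\cite{CLG13} for the uniform bound in $\ell$ and $r$, so your Lyapunov sketch for that part is a plausible alternative. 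There is, however, a factor-of-two slip in your normalization: since $\mu(\mathrm{d}x)=\tfrac12 e^{-x/2}\mathrm{d}x$ and $h(x)^2=\varphi'(x)^2e^x$, the total mass of $h^2\mu$ is $\tfrac12\int_0^\infty\varphi'(x)^2e^{x/2}\mathrm{d}x=C_1(\alpha)/4$, so the normalized invariant density is $2\varphi'(x)^2e^{x/2}/C_1(\alpha)$, and one gets $\int(1/h)\,\mathrm{d}\pi^{\mathbb{Q}}=2/C_1(\alpha)=1/\int_0^\infty\varphi'(s)^2e^{s/2}\mathrm{d}s$, which is what the lemma asserts; your quoted limit $-\varphi'(\ell)e^{\ell/2}/C_1(\alpha)$ is half the correct value.
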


\proof 
Firstly, under $\int \Theta(\mathrm{d}\t)\,P_{(\cc(\t))}$, the density of $X_{0}$ is
\begin{equation}
\label{asymptech1}
q(\ell)=\int_{[1,\infty)} \gamma(\mathrm{d}s)\,\frac{s}{2}\,e^{-s\ell/2}= -\varphi'(\ell).
\end{equation}
So from \cref{prop:tree-selected-law}, we have
\begin{align*}
1&= \int \Theta(\mathrm{d}\t)\,E_{(\cc(\t))}\Big[\exp-\int_0^r \mathrm{d}s\,\Big(m\big(1-\varphi(X_s)\big)^{\alpha-1}-\frac{1}{\alpha-1}\Big)\Big]\\
&=-\int \mathrm{d}\ell \,\varphi'(\ell)\,E_\ell\Big[\exp-\int_0^r \mathrm{d}s\,\Big(m\big(1-\varphi(X_s)\big)^{\alpha-1}-\frac{1}{\alpha-1}\Big)\Big].
\end{align*}

We can generalize the last identity via a minor extension of the calculations of the preceding subsection. Let $L^0_\infty$ be the total local time accumulated by the process $W$ at the root of $\Gamma$. Fix $r>0$ and take a nonnegative measurable function $F$ on $\T$. Let $h$ be a bounded nonnegative continuous function on $(0,\infty)$. As an analogue of $I_r$ in the preceding subsection, we set
$$I_r^h:=\E\otimes E\Bigg[ h(L^0_\infty)\,\sum_{x\in \Gamma_r} F(\Gamma[x])\,{\bf 1}_{\{x\prec W_\infty\}}\Bigg].$$
The same calculations that led to~(\ref{escape-proba}) give, for every $x\in\Gamma_r$ and $R>r$,
\begin{eqnarray*}
E[ h(L^{0,R}_\infty)\,{\bf 1}_{A_{x,R}}]&=& E\Big[h(L^{0,R}_\infty)\,\exp\Big(-\frac{1}{2}\sum_{i=1}^{n_{x}}\sum\limits_{j=1}^{k_{i}} \cc(\Gamma^{x,R}_{i,j})\,L^{a_i,R}_\infty\Big)\Big]\notag\\
&=&
E_{(\cc(\Gamma[x]))}\Big[h(X_r)\,\exp\Big(-\frac{1}{2}\sum_{i=1}^{n_{x}}\sum\limits_{j=1}^{k_{i}} \cc(\Gamma^{x,R}_{i,j})\,X_{r-a_i}\Big)\Big].
\end{eqnarray*}
When $R\to\infty$, $L^{0,R}_\infty$ converges to $L^0_\infty$, and so we get
$$E\big[ h(L^{0}_\infty)\,{\bf 1}_{\{x\prec W_\infty\}}\big]=E_{(\cc(\Gamma[x]))}\Big[h(X_r)\exp\Big(-\frac{1}{2}\sum_{i=1}^{n_{x}}\sum\limits_{j=1}^{k_{i}} \cc(\Gamma^{x,R}_{i,j})\,X_{r-a_i}\Big)\Big].$$
We then sum over $x\in \Gamma_r$ and integrate with respect to $\P$. By the same manipulations as in the previous proof, we arrive at
\begin{equation}
\label{firststepeq2}
I^h_r= \int \Theta(\mathrm{d}\t)\,F(\t)\,E_{(\cc(\t))}\Big[h(X_r)\exp-\int_0^r \mathrm{d}s\,\Big(m\big(1-\varphi(X_s)\big)^{\alpha-1}-\frac{1}{\alpha-1}\Big)\Big].
\end{equation}
Note that if $F=1$,
$$I^h_r= \E\otimes  E\big[h(L^0_\infty)\big] = - \int_0^\infty \mathrm{d}\ell\,\varphi'(\ell) h(\ell)$$
since given $\Gamma=\t$ the local time $L^0_\infty$ follows an exponentiel distribution with parameter $\cc(\t)/2$, and we use the same calculation as in (\ref{asymptech1}). Hence the case $F=1$ of (\ref{firststepeq2}) gives
\begin{equation}
\label{asymptech2}
\int_0^\infty \mathrm{d}\ell \,\varphi'(\ell)\,E_\ell\Big[h(X_r)\exp-\int_0^r \mathrm{d}s\,\Big(m\big(1-\varphi(X_s)\big)^{\alpha-1}-\frac{1}{\alpha-1}\Big)\Big]
=\int_0^\infty \mathrm{d}\ell\,\varphi'(\ell)\, h(\ell).
\end{equation}
By a standard truncation argument, this identity also holds if $h$ is unbounded.

\begin{lemma}
\label{martingalelemma}
The process
$$M_t\colonequals -\varphi'(X_t)\,\exp\bigg(\frac{X_t}{2}-\int_0^t \mathrm{d}s\,\Big(m\big(1-\varphi(X_s)\big)^{\alpha-1}-\frac{1}{\alpha-1}\Big)\bigg), \quad t\geq 0$$
is a martingale under $P_\ell$, for every $\ell\geq 0$.
\end{lemma}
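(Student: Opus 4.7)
The plan is to recognize $M_t$ as a Feynman--Kac-type exponential whose drift cancels by virtue of the ODE~\eqref{eq:edphi}. I would set $g(x) \colonequals -\varphi'(x)\,e^{x/2}$ and $V(x) \colonequals m(1-\varphi(x))^{\alpha-1}-\tfrac{1}{\alpha-1}$, so that $M_t = g(X_t)\,\exp\bigl(-\int_0^t V(X_s)\,\mathrm{d}s\bigr)$. The diffusion $X$ of~\eqref{EDSRK} has infinitesimal generator $L f(x) = 2x f''(x) + (2-x)f'(x)$, so It\^o's formula combined with $\mathrm{d}\langle X\rangle_s = 4 X_s\,\mathrm{d}s$ gives
\begin{equation*}
\mathrm{d}M_t = \exp\Bigl(-\int_0^t V(X_s)\,\mathrm{d}s\Bigr)\Bigl[\bigl(L g(X_t) - g(X_t)V(X_t)\bigr)\mathrm{d}t + 2\sqrt{X_t}\,g'(X_t)\,\mathrm{d}\eta_t\Bigr].
\end{equation*}

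The core of the argument is the pointwise identity $L g(x) = g(x)V(x)$, which I would establish by direct calculation. Differentiating $g$ twice and applying the weights $2x$ and $2-x$ of $L$ yields, after collecting terms,
\begin{equation*}
L g(x) = -e^{x/2}\bigl[2x\varphi'''(x) + (2+x)\varphi''(x) + \varphi'(x)\bigr].
\end{equation*}
Substituting the relation $2x\varphi'''(x) + (2+x)\varphi''(x) = -m\varphi'(x)\bigl[1 - (1-\varphi(x))^{\alpha-1}\bigr]$ provided by~\eqref{eq:edphi} leaves $-e^{x/2}\varphi'(x)\bigl[1 - m + m(1-\varphi(x))^{\alpha-1}\bigr]$, and the identity $m - 1 = \tfrac{1}{\alpha-1}$ converts this exactly into $g(x)V(x)$. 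I expect this algebraic cancellation to be the main technical step, since one must keep track of signs and of the factor $e^{x/2}$ carefully, but no deeper idea is needed.

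With $L g = gV$, the drift in the It\^o decomposition vanishes, so $M_t$ is a continuous local martingale, equal to $\int_0^t \exp\bigl(-\int_0^u V(X_s)\,\mathrm{d}s\bigr)\,2\sqrt{X_u}\,g'(X_u)\,\mathrm{d}\eta_u$. To promote it to a true martingale, I would appeal to a uniform bound. Proposition~\ref{prop:unique} gives $\E[\mathcal{C}^{(\alpha)}]<\infty$, and since $\mathcal{C}^{(\alpha)}\geq 1$ almost surely we have $|\varphi'(x)|\leq \tfrac12 \E[\mathcal{C}^{(\alpha)}]\,e^{-x/2}$, so $|g|\leq \tfrac12 \E[\mathcal{C}^{(\alpha)}]$ on $[0,\infty)$. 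Meanwhile $\varphi\in[0,1]$ forces $V\in[-\tfrac{1}{\alpha-1},1]$, so $\exp(-\int_0^t V(X_s)\,\mathrm{d}s)$ is bounded on each finite time interval. Hence $M_t$ is uniformly bounded on every compact time interval, which suffices to conclude that this bounded continuous local martingale is in fact a true martingale under $P_\ell$ for every $\ell \geq 0$.
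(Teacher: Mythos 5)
Your proof is correct and follows essentially the same route as the paper: apply Itô's formula to identify the drift, cancel it via the ODE~\eqref{eq:edphi} using $m-1=\frac{1}{\alpha-1}$, and then promote the local martingale to a true martingale using the bound $|\varphi'(x)|\leq C e^{-x/2}$ together with $V\geq -\tfrac{1}{\alpha-1}$, yielding boundedness on compact time intervals. The only cosmetic difference is that you package the cancellation as the generator identity $Lg=gV$, whereas the paper writes out the finite-variation part of $-M$ directly; the algebra and the boundedness argument are the same.
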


\noi{\bf Proof of \cref{martingalelemma}.} From the stochastic differential equation (\ref{EDSRK}), an application of It\^o's formula shows that the finite variation part of the semimartingale $-M_t$ is
\begin{equation}
\label{eq:finite-var}
\int_0^t\Big(2X_s\varphi'''(X_s) + (2+X_s)\varphi''(X_s) + \varphi'(X_s)\big(\frac{\alpha}{\alpha-1}-m(1-\varphi(X_{s})^{\alpha-1}\big)\Big)Y_{s}\, \mathrm{d}s,
\end{equation}
where for any $s\geq 0$,
$$Y_{s}\colonequals \exp\Big(\frac{X_s}{2}-\int_0^s\mathrm{d}u\Big(m\big(1-\varphi(X_s)\big)^{\alpha-1}-\frac{1}{\alpha-1}\Big)\Big). $$
Recall that $m=\frac{\alpha}{\alpha-1}$, and hence~(\ref{eq:finite-var}) vanishes thanks to~(\ref{eq:edphi}), whereupon $M$ is a local martingale. Furthermore, we have already noticed that, for every $\ell\geq 0$, $|\varphi'(\ell)|\leq Ce^{-\ell/2}$, where $C\colonequals \frac{1}{2}\int s\gamma(\mathrm{d}s)$. It follows that $|M|$ is bounded by $C\exp(\frac{t}{\alpha-1})$ over the time interval $[0,t]$, and thus $M$ is a (true) martingale. \endproof

\smallskip
We return to the proof of \cref{asympto-density}.
Let $\ell\geq 0$ and $t>0$. On the probability space where $X$ is defined, we introduce a new probability measure $Q^t_\ell$ by setting
$$Q^t_\ell \colonequals \frac{M_t}{M_0}\cdot P_\ell.$$
The fact that $Q^t_\ell$ is a probability measure follows from the martingale property derived in \cref{martingalelemma}. By definition of $M_{t}$, we have $P_\ell$-a.s.
$$ \frac{M_t}{M_0} = \frac{\varphi'(X_t)}{\varphi'(\ell)} \exp\Big(\frac{X_t-\ell}{2} - \int_0^t \mathrm{d}s\,\Big(m\big(1-\varphi(X_s)\big)^{\alpha-1}-\frac{1}{\alpha-1}\Big)\Big),$$
so that the martingale part of $\log\frac{M_t}{M_0}$ is
$$\int_0^t \sqrt{X_s}\,\mathrm{d}\eta_s + 2\int_0^t \frac{\varphi''(X_s)}{\varphi'(X_s)} \sqrt{X_s}\,\mathrm{d}\eta_s,$$
where $\eta$ is the linear Brownian motion in (\ref{EDSRK}). An application of Girsanov's theorem shows that under $Q^t_\ell$, the process
$$\wt\eta_s \colonequals  \eta_s -\int_0^s \sqrt{X_u}\Big(1+ \frac{2\varphi''(X_u)}{\varphi'(X_u)}\Big)\,\mathrm{d}u\;,\quad 0\leq s\leq t,$$
is a linear Brownian motion over the time interval $[0,t]$. Furthermore, on the same time interval $[0,t]$, the process $X$ satisfies the stochastic differential equation
$$\mathrm{d}X_s = 2\sqrt{X_s}\,\mathrm{d}\wt\eta_s + 2X_s\Big(1+ \frac{2\varphi''(X_s)}{\varphi'(X_s)}\Big)\mathrm{d}s + (2-X_s)\,\mathrm{d}s,$$
or equivalently, using (\ref{eq:laplace}),
\begin{equation}
\label{EDSbis}
\mathrm{d}X_s = 2\sqrt{X_s}\,\mathrm{d}\wt\eta_s + \Big(2-X_s + \frac{2}{\alpha-1}\frac{1-\varphi(X_{s})-(1-\varphi(X_{s}))^{\alpha}}{\varphi'(X_{s})}\Big)\mathrm{d}s.
\end{equation}
Notice that the function 
$$\ell \mapsto \frac{1-\varphi-(1-\varphi)^{\alpha}}{\varphi'}(\ell)$$
is continuously differentiable over $[0,\infty)$, takes negative values on $(0,\infty)$ and vanishes at $0$. Pathwise uniqueness, and therefore also weak uniqueness, holds for (\ref{EDSbis}) by an application of the classical Yamada-Watanabe criterion. The preceding considerations show that, under the probability measure $Q^t_\ell$ and on the time interval $[0,t]$, the process $X$ is distributed as the diffusion process on $[0,\infty)$ started from $\ell$, with generator
$$\mathcal{L} = 2r\,\frac{\mathrm{d}^2}{\mathrm{d}r^2} + \Big(2-r + \frac{2}{\alpha-1}\frac{1-\varphi-(1-\varphi)^{\alpha}}{\varphi'}(r)\Big)\,\frac{\mathrm{d}}{\mathrm{d}r}.$$
Write $\wt X$ for this diffusion process, and assume that $\wt X$ starts from $\ell$ under the probability measure $P_\ell$. Note that $0$ is an entrance point for $\wt X$, but independently of its starting point, $\wt X$ does not visit $0$ at a positive time. By comparing the solutions of (\ref{EDSRK}) and (\ref{EDSbis}), we know that $\wt X$ is recurrent on $(0,\infty)$. 

We next observe that, by (\ref{asymptech2}) and a few lines of calculations, the finite measure $\lambda$ on $(0,\infty)$ defined by
$$\lambda(\mathrm{d}\ell) \colonequals \varphi'(\ell)^2\,e^{\ell/2}\,\mathrm{d}\ell$$
is invariant for $\wt X$. We normalize $\lambda$ by setting
$$\wh\lambda = \frac{\lambda}{\lambda((0,\infty))}.$$

It is then easy to prove that the distribution of $\wt X_t$ under $P_\ell$ converges weakly to $\wh\lambda$ as $t\to\infty$, for any $\ell \geq 0$. Consequently, for any bounded continuous function $g$ on $[0,\infty)$, and every $\ell\geq 0$,
\begin{equation}
\label{conv-mesu-invar}
E_\ell\big[g(\wt X_t)\big] \build{\la}_{t\to\infty}^{} \int g\,\mathrm{d}\wh\lambda.
\end{equation}
By the same argument as in \cite[Section 3.2]{CLG13}, the preceding convergence remains true if $g$ is a continuous, increasing and nonnegative function such that $\int g\,\mathrm{d}\wh\lambda<\infty$.

We can thus apply (\ref{conv-mesu-invar}) to the function
$$g(\ell)= -\frac{1}{\varphi'(\ell)}\,e^{-\ell/2},$$
which satisfies the desired properties and in particular $\int g\,\mathrm{d}\lambda =-\int \varphi'(\ell)\,\mathrm{d}\ell = 1$. For this function $g$,
$$E_\ell\big[g(\wt X_t)\big] = Q^t_\ell\big[g(X_t)\big]= -\frac{e^{-\ell/2}}{\varphi'(\ell)}\,E_\ell\Big[ \exp-\int_0^t \mathrm{d}s\,\Big(m\big(1-\varphi(X_s)\big)^{\alpha-1}-\frac{1}{\alpha-1}\Big)\Big].$$
It follows from (\ref{conv-mesu-invar}) that, for every $\ell\geq 0$, 
$$\lim_{t\to\infty} -\frac{e^{-\ell/2}}{\varphi'(\ell)}\,E_\ell\Big[
\exp-\!\int_0^t \mathrm{d}s\,\Big(m\big(1-\varphi(X_s)\big)^{\alpha-1}-\frac{1}{\alpha-1}\Big)\Big] =\int g\mathrm{d}\wh\lambda= \frac{1}{\lambda((0,\infty))}= \frac{1}{\int_{0}^{\infty}\mathrm{d}s \varphi'(s)^2e^{s/2}},$$
which gives the first assertion of the lemma. The second assertion of \cref{asympto-density} can be shown in the same way as in~\cite{CLG13}.\hfill$\square$

\smallskip 
By definition, we have
$$\Phi_r(c)= \frac{c}{2}\int_0^\infty \mathrm{d}\ell\,e^{-c\ell/2} \,E_\ell\Big[\exp-\int_0^r \mathrm{d}s\,\Big(m\big(1-\varphi(X_s)\big)^{\alpha-1}-\frac{1}{\alpha-1}\Big)\Big].$$
From \cref{asympto-density}
and an application of the dominated convergence theorem, we get
$$\lim_{r\to +\infty}\Phi_r(c)=  c\int_0^\infty \mathrm{d}\ell\,e^{-c\ell/2} \times  \Big(-\frac{\varphi'(\ell) e^{\ell/2}}{C_1}\Big),$$
where 
$$C_1\colonequals 2\int_0^\infty \mathrm{d}s\,\varphi'(s)^2\,e^{s/2} = \int\!\!\int \gamma(\mathrm{d}\ell)\gamma(\mathrm{d}
\ell') \frac{\ell \ell'}{\ell+\ell'-1}.$$
By a straightforward calculation, the preceding limit is identified with $\Phi_\infty(c)$ defined in the statement of Proposition~\ref{invariant-meas}. 

Finally, with all the ingredients prepared above in this appendix, we can show the invariance of $\Lambda^{*}(\mathrm{d}\t \mathrm{d}\mathbf{v})$ under the shifts $(\tau_{r},r\geq 0)$ in the same way as in~\cite[Proposition 12]{CLG13}, and the proof of Proposition~\ref{invariant-meas} is therefore completed. 

\subsection{Another derivation of formula~(\ref{eq:beta1})}
Recall that $\nu_\t$ stands for the harmonic measure of a tree $\t\in \T$. For every $r>0$, we consider the nonnegative measurable function $G_r$ defined on $\T^*$ by the formula
$$G_r(\t,\mathbf{v})\colonequals -\log  \nu_\t(\mathcal{B}_\t(\mathbf{v},r)),$$
where $\mathcal{B}_\t(\mathbf{v},r)$ denotes the set of all geodesic rays of $\t$ that coincide with the ray $\mathbf{v}$ over the interval $[0,r]$. The flow property of harmonic measure (cf.~Lemma 7 in~\cite{CLG13}) implies that, for every $r,s>0$, we have
$$G_{r+s}= G_r + G_s\circ \tau_r.$$
Since the shift $\tau_r$ acting on $(\T^{*},\Lambda^{*})$ is ergodic, the Birkhoff ergodic theorem implies that $\Lambda^*$-a.s.
$$\lim\limits_{s \to \infty}\frac{G_s}{s}= \Lambda^*(G_1).$$
Recall that $\Lambda^*$ has a strictly positive density with respect to $\Theta^*$. So the latter convergence also holds $\Theta^*$-a.s., which gives the convergence~(\ref{eq:1}) with $\beta = \Lambda^*(G_1)>0$. 

For $\ve >0$, we define a nonnegative function $H_\ve$ on $\T^*$ by setting
$$H_\ve(\t,\mathbf{v})\colonequals  \left\{
\begin{array}{ll}
0&\hbox{if } z_{\varnothing} \geq \ve,\\
-\log \nu_\t(\{\mathbf{v}'\in \N^{\N}\colon \mathbf{v}_1\prec \mathbf{v}'\})\quad &\hbox{if } z_{\varnothing} < \ve,
\end{array}
\right.
$$
where we write $\t=(\Pi, (z_v)_{v\in\Pi})$ as in Section~\ref{sec:ctgwtree}. Clearly, $H_\ve(\t,\mathbf{v})\leq G_\ve(\t,\mathbf{v})$, and $H_\ve(\t,\mathbf{v})=G_\ve(\t,\mathbf{v})$ if $z_{\mathbf{v}_1}\geq\ve$. More generally, $H_\ve\circ \tau_r(\t,\mathbf{v})=G_\ve\circ \tau_r(\t,\mathbf{v})$ if there is at most one index $i\geq 0$ such that $r\leq z_{\mathbf{v}_i} < r+\ve$. It follows from these remarks that, for every integer $n\geq 1$,
\begin{equation}
\label{ergodic-tech1}
G_1\geq \sum_{k=0}^{n-1} H_{1/n} \circ \tau_{k/n},
\end{equation}
and for every $(\t,\mathbf{v})\in \T^*$,
\begin{equation}
\label{ergodic-tech2}
G_1(\t,\mathbf{v})=\lim_{n\to\infty} \sum_{k=0}^{n-1} H_{1/n} \circ \tau_{k/n}(\t,\mathbf{v}).
\end{equation}

Let us then investigate the behavior of $\Lambda^*(H_\ve)$ when $\ve\to 0$. By considering the subtrees $\t_{(1)},\ldots,\t_{(k_{\varnothing})}$ of $\t$ obtained at the first branching point, we can write
\begin{equation}
\label{eq:integro1}
\Lambda^*(H_\ve)=-\int \Theta(\mathrm{d}\t)\,\Phi_\infty(\cc(\t))\,{\bf 1}_{\{z_\varnothing<\ve\}} \sum\limits_{i=1}^{k_{\varnothing}}\frac{\cc(\t_{(i)})}{\sum_{j=1}^{k_{\varnothing}}\cc(\t_{(j)})} \log \frac{\cc(\t_{(i)})}{\sum_{j=1}^{k_{\varnothing}}\cc(\t_{(j)})}.
\end{equation}
Recall the branching property of the CTGW tree, and notice that 
$$\cc(\t)= \frac{\sum_{j=1}^{k_{\varnothing}}\cc(\t_{(j)})}{e^{-z_\varnothing} + (1-e^{-z_\varnothing})\big(\sum_{j=1}^{k_{\varnothing}}\cc(\t_{(j)})\big)}.$$
Substituting this into (\ref{eq:integro1}), we see that $\Lambda^*(H_\ve)$ can be expanded as
\begin{eqnarray*}
\lefteqn{-\sum\limits_{k=2}^{\infty}\theta(k)\int\! \Theta(\mathrm{d}\t_{1}) \int\! \Theta(\mathrm{d}\t_{2})\cdots \int\!\Theta(\mathrm{d}\t_{k})\sum\limits_{i=1}^{k}\frac{\cc(\t_{i})}{\sum_{j=1}^{k}\cc(\t_{j})} \log \frac{\cc(\t_{i})}{\sum_{j=1}^{k}\cc(\t_{j})}}\hspace{3cm}\\
  &\times &\int_{0}^{\ve}\mathrm{d}z \,e^{-z}\,\Phi_\infty\Big(\frac{\sum_{j=1}^{k}\cc(\t_{j})}{e^{-z} + (1-e^{-z})\big(\sum_{j=1}^{k}\cc(\t_{j})\big)}\Big)\\
\lefteqn{=-\sum\limits_{k=2}^{\infty} k\,\theta(k)\int\! \Theta(\mathrm{d}\t_{1}) \int\! \Theta(\mathrm{d}\t_{2})\cdots \int\!\Theta(\mathrm{d}\t_{k})\,\frac{\cc(\t_{1})}{\sum_{j=1}^{k}\cc(\t_{j})} \log \frac{\cc(\t_{1})}{\sum_{j=1}^{k}\cc(\t_{j})}}\hspace{3.4cm}\\
 &\times &\int_{0}^{\ve}\mathrm{d}z \,e^{-z}\,\Phi_\infty\Big(\frac{\sum_{j=1}^{k}\cc(\t_{j})}{e^{-z} + (1-e^{-z})\big(\sum_{j=1}^{k}\cc(\t_{j})\big)}\Big)\\
\end{eqnarray*}
by a symmetry argument. Since $\Phi_\infty$ is a bounded continuous function, and 
\begin{displaymath}
\left|\frac{\cc(\t_{1)})}{\sum_{j=1}^{k}\cc(\t_{j})} \log \frac{\cc(\t_{1})}{\sum_{j=1}^{k}\cc(\t_{j})}\right|\leq 1,
\end{displaymath}
we can let $\ve\to 0$ in the preceding expression and get
\begin{equation}
\label{ergodic-tech3}
\lim_{\ve\to 0}\frac{\Lambda^*(H_\ve)}{\ve} 
= -\sum\limits_{k=2}^{\infty} k\theta(k)\!\int\! \Theta(\mathrm{d}\t_{1}) \cdots \!\int\!\Theta(\mathrm{d}\t_{k})\frac{\cc(\t_{1})}{\sum_{j=1}^{k}\cc(\t_{j})} \log \frac{\cc(\t_{1})}{\sum_{j=1}^{k}\cc(\t_{j})}\Phi_\infty\Big(\sum_{j=1}^{k}\cc(\t_{j})\Big).
\end{equation}
Note that we used the fact that $\theta$ has a finite first moment. Sine the limit in the preceding display is finite, we can use (\ref{ergodic-tech2}) and Fatou's lemma to get that $\Lambda^*(G_1)<\infty$, and then use (\ref{ergodic-tech1}) (to justify dominated convergence) and (\ref{ergodic-tech2}) again to obtain that
$$\beta=\Lambda^*(G_1) =\lim_{n\to \infty} n\, \Lambda^*(H_{1/n})$$
coincides with the right-hand side of (\ref{ergodic-tech3}). Using the expression of $\Phi_\infty$, we can therefore reformulate $\beta$ as in formula~(\ref{eq:beta1}).

\end{document}